\documentclass[final]{siamart251216}
\usepackage{titlesec}

\pdfoutput=1
\usepackage{soul}
\usepackage{enumitem}
\usepackage{lipsum}
\usepackage{amsfonts}
\usepackage{epstopdf}
\usepackage{multirow}
\usepackage[colorinlistoftodos,prependcaption,textsize=tiny]{todonotes}
\ifpdf
  \DeclareGraphicsExtensions{.eps,.pdf,.png,.jpg}
\else
  \DeclareGraphicsExtensions{.eps}
\fi

\usepackage{datetime}
\newdateformat{monthyeardate}{%
  \monthname[\THEMONTH] \THEDAY, \THEYEAR}

\makeatletter
\newcommand*{\addFileDependency}[1]{
  \typeout{(#1)}
  \@addtofilelist{#1}
  \IfFileExists{#1}{}{\typeout{No file #1.}}
}
\makeatother

\usepackage{academicons}
\usepackage{xcolor}

\usepackage{amsmath}
\allowdisplaybreaks
\usepackage{amssymb}
\usepackage{commath}
\usepackage{mathtools}
\usepackage{bbm}

\usepackage{color}
\usepackage{graphicx}
\usepackage[small]{caption}
\usepackage{subcaption}

\usepackage{relsize}
\usepackage{adjustbox}
\usepackage{algorithm,algpseudocode}
\usepackage{booktabs}
\usepackage{tikz}

\usepackage{verbatim}

\usepackage{enumitem}
\setlist[enumerate]{leftmargin=.5in}
\setlist[itemize]{leftmargin=.5in}

\newsiamthm{problem}{Problem}
\newsiamthm{assumption}{Assumption}
\newsiamremark{remark}{Remark}
\newsiamremark{hypothesis}{Hypothesis} 
\crefname{hypothesis}{Hypothesis}{Hypotheses}
\newsiamremark{example}{Example}
\newsiamthm{claim}{Claim}
\newsiamthm{conjecture}{Conjecture}

\headers{NL-RMM-GKS for Dynamic and Streaming Inverse Problems}{
Okunola T., Pasha M., Kilmer M., Nagy J. de Sturler E.
}
\title{Nonlinear RMM-GKS for Large-Scale Dynamic and Streaming Inverse Problems with Uncertain Forward Operators
}
\author{
Toluwani Okunola\thanks{Department of Mathematics, Tufts University} (\email{toluwani.okunola@tufts.edu})
\and 
Mirjeta Pasha\thanks{Department of Mathematics, Virginia Tech} (\email{mpasha@vt.edu})
\and
Misha E. Kilmer\thanks{Department of Mathematics, Tufts University}    (\email{misha.kilmer@tufts.edu})
\and
James G. Nagy \thanks{Institute of Mathematics, Emory University} (\email{jnagy@emory.edu})
\and
Eric de Sturler\thanks{Department of Mathematics, Virginia Tech} (\email{sturler@vt.edu})}
\usepackage{amsopn}

\usepackage{ifthen}
\usepackage{float}
\floatstyle{ruled}
\newfloat{algorithm}{htb}{alg}
\floatname{algorithm}{Algorithm}
\newcounter{algo@row}
\newcounter{algo@rowindent}
\newcommand{\algofont}[1]{\textbf{#1}}
\newcommand{\algonumbersize}[1]{\scriptsize{#1}}
\newcommand{\algopreitem}[1][\arabic{algo@row}]{\texttt{\algonumbersize{#1}}}
\newcommand{\algoitemskip}{\hspace{\value{algo@rowindent}cc}}
\newcommand{\algonewnestedopen}[2]{
	\newcommand{#1}[1][]{%
		\ifthenelse{\equal{##1}{}}{\item}{\item[{\algopreitem[##1]}]}
		\algoitemskip\algofont{#2}%
		\addtocounter{algo@rowindent}{1}%
		\ignorespaces
	}
}
\newcommand{\algonewnestedaux}[2]{
	\newcommand{#1}[1][]{
		\addtocounter{algo@rowindent}{-1}
		\ifthenelse{\equal{##1}{}}{\item}{\item[{\algopreitem[##1]}]}
		\algoitemskip\algofont{#2}%
		\addtocounter{algo@rowindent}{+1}%
		\ignorespaces
	}
}
\newcommand{\algonewnestedclose}[2]{
	\newcommand{#1}[1][]{
		\addtocounter{algo@rowindent}{-1}
		\ifthenelse{\equal{##1}{}}{\item}{\item[{\algopreitem[##1]}]}
		\algoitemskip\algofont{#2}%
		\ignorespaces
	}
}
\newcommand{\algonewcommand}[2]{
	\newcommand{#1}[1][default]{
		\ifthenelse{\equal{##1}{default}}{\item}{\item[{\algopreitem[##1]}]}%
		\algoitemskip\algofont{#2}%
		\ignorespaces
	}%
}
\newcommand{\algonewkeyword}[2]{\newcommand{#1}{\algofont{#2}}}
\algonewcommand{\STATE}{\ignorespaces}
\algonewcommand{\INPUT}{Input: }
\algonewcommand{\pINPUT}{\phantom{Input: }}
\algonewcommand{\COMPUTE}{Compute: }
\algonewcommand{\OUTPUT}{Output: }
\algonewcommand{\pOUTPUT}{\phantom{Output: }}

\algonewnestedopen{\IF}{if }
\algonewnestedaux{\ELSEIF}{else if }
\algonewnestedaux{\ELSE}{else }
\algonewnestedclose{\ENDIF}{end if }
\algonewnestedopen{\FOR}{for }
\algonewnestedclose{\ENDFOR}{end for }
\algonewnestedopen{\WHILE}{while }
\algonewnestedclose{\ENDWHILE}{end while }
\algonewcommand{\BREAK}{break}%

\algonewkeyword{\To}{to }%
\algonewkeyword{\Do}{do }%
\algonewkeyword{\Then}{then }%
\algonewkeyword{\End}{end }%
\algonewkeyword{\AND}{and }%
\algonewkeyword{\True}{true }%
\algonewkeyword{\False}{false }%
\algonewkeyword{\irbleigs}{irbleigs }%
\algonewkeyword{\tridiag}{tridiag}%
\algonewkeyword{\reorth}{reorth}%

\usepackage{cleveref}
\crefname{section}{§}{§§} 

\DeclareMathOperator*{\argmin}{arg\,min}

\newcommand{\bV}{{\bf V}}

\usepackage{textcomp}
\usepackage{gensymb}

\usepackage{amsmath}
\usepackage{amssymb}
\usepackage[utf8]{inputenc}

\usepackage{algcompatible}
\usepackage{algorithm}
\usepackage{url}

\graphicspath{{figures/}}

\begin{document}
\maketitle

\begin{abstract}
Many practical imaging systems suffer from uncertainty in acquisition geometry---such 
as projection angles in computed tomography or sensor positions in photoacoustic 
tomography---leading to nonlinear inverse problems that require joint estimation of 
both the image and the forward model parameters. Standard approaches that assume a 
known linear forward operator fail to account for these uncertainties, resulting in 
significant reconstruction artifacts.
We propose a nonlinear recycled majorization–minimization generalized Krylov subspace (NL-RMM-GKS) framework for large-scale inverse problems with uncertain forward operators. The method extends MM-GKS to nonlinear settings by combining majorization–minimization for nonsmooth regularization with Krylov subspace projection and subspace recycling, ensuring bounded memory usage.

Two complementary formulations are developed: an alternating minimization approach that alternates between image updates and Gauss–Newton parameter estimation, and a variable projection approach that eliminates the image variable and optimizes directly over the parameters using inexact inner solves. We further introduce streaming variants that process data sequentially, enabling reconstruction from large or dynamically acquired datasets without storing the full operator. For dynamic problems, we incorporate two temporal regularization strategies---optical flow and anisotropic total variation---as plug-in choices within the framework.
We carry out rigorous numerical experiments in fan-beam computed tomography and photoacoustic tomography to demonstrate that our proposed framework achieves high-quality reconstructions with bounded memory requirements, making it suitable for large-scale dynamic imaging problems.
\end{abstract}

\begin{keywords}
Nonlinear inverse problems, dynamic tomography, generalized Krylov subspace, recycling methods, streaming reconstruction, edge-preserving regularization, variable projection, uncertain geometry
\end{keywords}

\begin{AMS}
65F22, 65F10, 65J20, 65K10, 68U10
\end{AMS}

\section{Introduction.}
\label{sec:intro}
Inverse problems arise throughout science and engineering when one seeks to determine 
unknown parameters or images from indirect measurements. In medical imaging, deblurring, 
geophysical exploration, and astronomical observation, the challenge is that of 
reconstructing an unknown image $\mathbf{u} \in \mathbb{R}^n$ from noisy measurements 
$\mathbf{b} \in \mathbb{R}^m$ related through a forward operator. In many practical 
scenarios, the forward operator depends on unknown parameters $\mathbf{p} \in 
\mathbb{R}^{n_p}$, leading to a nonlinear relationship:
\begin{equation}
\label{eq:nonlinear_forward}
\mathbf{H}(\mathbf{p})\mathbf{u} + \mathbf{e} = \mathbf{b},
\end{equation}
where $\mathbf{H}(\mathbf{p}) \in \mathbb{R}^{m \times n}$ represents the forward 
operator, $\mathbf{e}$ denotes measurement noise, and $\mathbf{p}$ encodes geometric 
or calibration uncertainties. Consider computed tomography where projection angles or 
source locations may be imperfectly calibrated, photoacoustic imaging where sensor 
positions contain uncertainties, or deblurring problems where the point spread function 
is not precisely known. This problem is often underdetermined or ill-posed, making the 
least-squares solution non-unique or highly sensitive to noise. To combat these issues, 
regularization is employed, leading to the joint estimation problem:
\begin{equation}
\label{eq:nonlinear_model}
\min_{\mathbf{u},\, \mathbf{p}} \frac{1}{2}\|\mathbf{H}(\mathbf{p})\mathbf{u} - 
\mathbf{b}\|_2^2 + \lambda\|\boldsymbol{\Psi}\mathbf{u}\|_q^q,
\end{equation}
where $0 < q \leq 2$, $\boldsymbol{\Psi}$ represents a regularization operator, and 
$\lambda > 0$ is a regularization parameter. We are thus presented with the joint tasks 
of estimating both the image $\mathbf{u}$ and the geometric or calibration parameters 
$\mathbf{p}$. This joint estimation problem is significantly more challenging than the 
linear case, as it couples the typically large-dimensional image reconstruction problem 
with a smaller but highly nonlinear parameter estimation problem. The challenge becomes 
even more pronounced in \emph{dynamic} imaging settings, where the object itself 
changes over time---as in cardiac CT or time-lapse photoacoustic imaging---and the 
forward operator may simultaneously depend on unknown geometric parameters. In such 
cases, temporal regularization must be incorporated alongside joint image-parameter
estimation, further coupling an already challenging reconstruction problem.
{Alternative approaches include Bayesian formulations with spatiotemporal priors that promote edge   
  preservation through wavelet-based representations~\cite{lan2025spatiotemporal}, and sparse Bayesian learning approaches that embed sparsity-promoting priors into Krylov subspace solvers~\cite{lindbloom2025priorconditioned}.}

In the case of image reconstruction, $q$ in Equation~\eqref{eq:nonlinear_model} is 
often set to $1$ in order to ensure that the regularization term is a valid 
norm~\cite{buccini2021linearized, chung2022, chung2019flexible}. The effect of this 
choice depends on $\boldsymbol{\Psi}$: when $\boldsymbol{\Psi}$ is a gradient or finite difference operator, 
$q = 1$ promotes edge preservation (akin to total variation regularization), whereas 
when $\boldsymbol{\Psi}$ is the identity, it promotes sparsity in the solution. We will therefore 
use $q = 1$ throughout the paper, but the framework we present can easily be 
generalized to other values of $q$.

In the special case where the forward operator is fully known ($\mathbf{p}$ fixed), 
\eqref{eq:nonlinear_forward} reduces to the classical linear model 
$\mathbf{H}\mathbf{u} + \mathbf{e} = \mathbf{b}$, and \eqref{eq:nonlinear_model} 
reduces to the standard regularized least-squares problem:
\begin{equation}
\label{eq:mmgks_problem}
\min_{\mathbf{u}} \frac{1}{2}\|\mathbf{H}\mathbf{u} - \mathbf{b}\|_2^2 + 
\lambda\|\boldsymbol{\Psi}\mathbf{u}\|_q^q,
\end{equation}
which has been extensively studied in the
literature~\cite{buccini2021linearized, chung2022, chung2019flexible}.

\subsection{Existing Approaches and Their Limitations.}
The challenge of reconstructing images when the forward operator contains unknown parameters has been addressed through several distinct approaches.
Block coordinate descent (BCD) methods, also known as alternating minimization, alternate between fixing one variable set and optimizing over the other, converting the joint minimization into a sequence of simpler subproblems \cite{bonettini2011inexact, tseng2009coordinate}. However, BCD is sensitive to initialization and tends to converge to local minima \cite{meng2023numerical}.

Variable projection (VarPro), introduced by Golub and Pereyra \cite{golub1973differentiation, golub2003separable}, offers an elegant alternative by eliminating the image variable analytically. Given a separable $\ell_2$ problem where measurements depend linearly on the image $\mathbf{u}$ and nonlinearly on parameters $\mathbf{p}$, VarPro substitutes $\mathbf{u} = \mathbf{H}(\mathbf{p})^\dagger \mathbf{b}$ to obtain a reduced objective depending only on $\mathbf{p}$. It has been shown in the literature (see \cite{kaufman1975varpro, ruhe1974seperable, espanol2023variable}) that the VarPro approach of separating the linear variable $\mathbf{u}$ from the nonlinear variable $\mathbf{p}$ speeds up the convergence of iterative methods used to solve \eqref{eq:nonlinear_model} in the case $q=2$. The case $1 \le q<2$ which we consider here is more challenging, as there is no closed-form expression for the solution $\mathbf{u}(\mathbf{p})$ of the problem for fixed $\mathbf{p}$ (see \cite{espanol2021ell_p}).

More recently, machine learning and optimization-hybrid methods have emerged for joint calibration and reconstruction in nonlinear inverse problems. Here we use the term ``hybrid'' to mean a combination of learned and iterative optimization components, distinct from the hybrid Krylov methods used for linear regularization \cite{chung2022}. Machine learning models have been trained to predict appropriate regularization parameters directly \cite{afkham2021learning, chung2022learning}. Inspired by this, Meng et al.\ \cite{meng2023numerical} introduce a framework that calibrates geometry parameters using a neural network, and then refines the predicted parameters and reconstructs the imaged object simultaneously using BCD. This two-stage strategy aims to combine the speed and generalization of learned models with the reliability and interpretability of optimization-based refinement. However, such hybrid methods inherit the limitations of their constituent parts---they still rely on BCD for the refinement stage with its associated convergence issues, and the learned components require substantial training data and may not generalize well to out-of-distribution scenarios or geometries not seen during training. Furthermore, the learning phase introduces an additional layer of complexity that can be difficult to implement in practice.

For the linear case ($\mathbf{p}$ known), the majorization-minimization generalized Krylov subspace (MM-GKS) method introduced by Lanza et al.\ \cite{mm-gks} has proven highly effective for solving large-scale inverse problems \cite{buccini2020modulus, pasha2023computational}. The algorithm addresses the non-smooth $\ell_1$-type regularization by constructing a sequence of quadratic majorants, each of which can be minimized efficiently using projection onto a small generalized Krylov subspace. MM-GKS tackles problems of the form \eqref{eq:mmgks_problem} by iteratively refining a weighted quadratic approximation to the objective, automatically selecting the regularization parameter and solving the quadratic problem on small projected subspaces.

However, MM-GKS in its original form has two significant limitations for large-scale and nonlinear problems. First, the Krylov subspace dimension grows with each iteration, requiring storage of an expanding basis. For problems requiring hundreds of iterations to converge, this becomes prohibitively costly. Second, the method assumes a fixed, known forward operator $\mathbf{H}$, and cannot directly handle the nonlinear dependence on parameters $\mathbf{p}$ in equation \eqref{eq:nonlinear_model}.

Recent developments have partially addressed the memory limitation. In \cite{buccini2023limited}, a restarted MM-GKS algorithm is introduced which periodically restarts the algorithm to reduce memory and computational costs. However, after restarting, their approach ignores the previous solution subspace, discarding relevant information that has been collected. The recycled MM-GKS (RMM-GKS) method introduced by Pasha et al.\ in \cite{pasha2023recycling} alternates between enlarging the Krylov subspace and compressing it to retain only the most informative directions. This keeps the memory bounded while maintaining convergence properties. The authors also introduce a streaming variant (s-RMM-GKS) that further extends this to sequential data processing, accommodating scenarios where measurements arrive in blocks or where the full dataset cannot be stored due to limited memory. For dynamic problems with known geometry, the MM-GKS framework has been 
combined with optical flow regularization (MMGKS-OF) \cite{okunola2025mmgks-of}, 
which alternates between estimating forward and reverse optical flow velocity fields 
and reconstructing the image sequence, incorporating the estimated motion as a 
linearized regularization operator within each MM-GKS solve. A unified framework 
for edge-preserving dynamic reconstruction using spatio-temporal anisotropic total 
variation regularization---which simultaneously promotes sparse spatial gradients 
per frame and penalizes inter-frame differences via a joint $\ell_1$ penalty---was 
developed in \cite{pasha2023computational}.

Despite these advances, however, the fundamental challenge of nonlinear geometry remains unaddressed within the MM-GKS framework.
When the forward operator depends on unknown parameters, existing methods either treat the parameters as fixed, accepting the resulting artifacts, or alternate between image reconstruction and parameter estimation using generic nonlinear solvers that discard valuable Krylov subspace information after each outer iteration. None of these approaches fully leverage the structure of the problem or the efficiency of Krylov-based methods, which we address in the present work.

\subsection{Contributions.}
In this paper, we develop a comprehensive framework for nonlinear inverse problems with uncertain forward operators that extends and unifies previous MM-GKS variants. Our contributions include:

\paragraph{Nonlinear RMM-GKS framework (NL-RMM-GKS).} We present two complementary realizations for joint image-parameter estimation:
\begin{itemize}
\item An \textbf{alternating minimization (AltMin)} approach that majorizes the non-smooth penalty and alternates between updating the image via recycled Krylov projections and updating parameters through Gauss-Newton steps.
\item A \textbf{variable projection (VarPro)} approach that eliminates the image variable through an inner MM-GKS solve and performs Gauss-Newton optimization on the reduced objective.
\end{itemize}
Both realizations share the same inner MM-GKS machinery, regularization strategies, and parameter selection methods, differing only in how they handle parameter updates.

\paragraph{Recycling for nonlinear problems.} We adapt the enlarge-compress recycling strategy to the nonlinear setting, maintaining bounded memory while preserving essential search directions. The recycling mechanism intelligently retains information about the solution structure even as the forward operator changes with parameter updates.

\paragraph{Streaming extensions (s-NL-RMM-GKS).} We develop streaming variants that process measurement blocks sequentially, carrying over basis information between blocks. This enables reconstruction from data that arrives sequentially or from datasets too large to fit in memory.

\paragraph{Temporal regularization for dynamic problems.} For dynamic imaging sequences, we incorporate two plug-in temporal regularization strategies within the NL-RMM-GKS framework: optical flow regularization \cite{okunola2025mmgks-of}, which enforces a physics-based motion model between frames, and anisotropic total variation \cite{pasha2023recycling}, which penalizes frame-to-frame differences without assuming a specific motion structure.

\paragraph{Unified methodology.} Our framework naturally generalizes MMGKS, RMM-GKS, s-RMM-GKS, and MMGKS-OF as special cases, providing a unified formulation of these methods.

\subsection{Organization.}
The remainder of this paper is organized as follows. In Section~\ref{sec:preliminaries} 
we establish notation and review essential background on inverse problems and 
regularization. In Section~\ref{sec:background} we summarize the MM-GKS method and 
its recycling and streaming variants for linear problems. Our main contributions are 
presented in Section~\ref{sec:nonlinear}: the nonlinear RMM-GKS framework with both 
AltMin and VarPro realizations, convergence analysis, and detailed algorithms. 
Streaming extensions of NL-RMM-GKS are developed in Section~\ref{sec:streaming}. 
Section~\ref{sec:experiments} contains static numerical experiments in computed 
tomography and photoacoustic tomography. The extension to dynamic problems, including 
the temporal regularization strategies and dynamic experiments, is presented in 
Section~\ref{sec:dynamic}. We conclude in Section~\ref{sec:conclusions} with a 
summary and discussion of future directions. A proof of the main convergence result 
is provided in Appendix~\ref{sec:appendix_proof}.

\section{Preliminaries.}
\label{sec:preliminaries}
We start by introducing our notation, the regularization used in the specification 
of the problem, and the temporal regularization strategies employed for dynamic 
imaging.

\subsection{Notation and Problem Setup.}
We denote vectors by lowercase bold letters ($\mathbf{u}, \mathbf{b}, \mathbf{p}$) 
and matrices by uppercase bold letters ($\mathbf{H}, \boldsymbol{\Psi}$). For 
dynamic problems involving multiple time frames, we stack the images as 
$\mathbf{u} = \text{vec}([\mathbf{u}^{(1)}, \ldots, \mathbf{u}^{(n_t)}])$, where 
each $\mathbf{u}^{(j)} \in \mathbb{R}^{n_s}$ represents a single frame and 
$n = n_s n_t$ is the total dimension.

The forward operator $\mathbf{H}(\mathbf{p}) \in \mathbb{R}^{m \times n}$ maps 
the image space to the measurement space and depends on parameters 
$\mathbf{p} \in \mathbb{R}^{n_p}$, which encode geometric or calibration 
information such as perturbations in projection angles or detector positions in 
computed tomography, radial shifts in photoacoustic imaging, or point spread 
function parameters in deblurring. The measurement vector $\mathbf{b} \in 
\mathbb{R}^m$ contains noisy observations related to the true image through 
equation~\eqref{eq:nonlinear_forward}.

\subsection{Regularization Operator.}
For static problems, $\boldsymbol{\Psi}$ represents a finite difference 
approximation to the gradient in horizontal and vertical directions. For dynamic 
problems, we include temporal differences:
\begin{equation}
\boldsymbol{\Psi} = \begin{bmatrix} \mathbf{I}_{n_t} \otimes \mathbf{I}_{n_y} 
\otimes \mathbf{L}_x \\ \mathbf{I}_{n_t} \otimes \mathbf{L}_y \otimes 
\mathbf{I}_{n_x} \\ \mathbf{L}_t \otimes \mathbf{I}_{n_y} \otimes \mathbf{I}_{n_x} 
\end{bmatrix},
\end{equation}
where $\mathbf{L}_x, \mathbf{L}_y, \mathbf{L}_t$ are discrete derivative operators 
in the horizontal, vertical, and temporal directions respectively, and $\otimes$ 
denotes the Kronecker product. The choice of how the temporal component 
$\mathbf{L}_t$ is constructed determines the temporal regularization strategy, 
as described in Section~\ref{sec:temporal_reg} below.

\subsection{Temporal Regularization for Dynamic Problems.}
\label{sec:temporal_reg}

For dynamic imaging problems, the choice of temporal regularization strategy 
significantly affects reconstruction quality. We consider two plug-in strategies 
within the NL-RMM-GKS framework: anisotropic space-time total variation 
(ANISO-TV)~\cite{pasha2023computational}, which promotes sparse spatial gradients 
and penalizes frame-to-frame intensity differences, and optical flow 
regularization (OF)~\cite{okunola2025mmgks-of}, which enforces a physics-based 
motion model between frames. Both strategies are described in full detail in 
Section~\ref{sec:dynamic}, where their mathematical formulations and algorithms 
are presented alongside the dynamic numerical experiments.

\subsection{Streaming Data and Block Processing.}
\label{sec:streaming_setup}
For streaming applications, we partition the forward operator and measurement 
vector into $N$ blocks:
\begin{equation}
\mathbf{H}(\mathbf{p}) = \begin{bmatrix} \mathbf{H}_1(\mathbf{p}) \\ \vdots \\ 
\mathbf{H}_N(\mathbf{p}) \end{bmatrix}, \quad \mathbf{b} = \begin{bmatrix} 
\mathbf{b}_1 \\ \vdots \\ \mathbf{b}_N \end{bmatrix},
\end{equation}
where each $\mathbf{H}_j(\mathbf{p}) \in \mathbb{R}^{m_j \times n}$ and 
$\mathbf{b}_j \in \mathbb{R}^{m_j}$ corresponds to a randomly selected subset 
of measurements (e.g., a randomly assigned block of projection angles in CT). 
The measurements are partitioned by drawing without replacement, so that each 
measurement appears in exactly one block. The streaming algorithms 
process block $j$ in sequence, updating the solution and parameter estimates 
while carrying over the compressed Krylov basis from block $j-1$ to block $j$. 
The number of blocks $N$ controls the memory-quality trade-off: larger $N$ 
reduces peak memory and runtime but may degrade reconstruction quality, a 
trade-off we explore systematically in Section~\ref{sec:experiments}.

\section{Background: MM-GKS and Related Methods.}
\label{sec:background}

Here we review the MM-GKS framework for linear inverse problems 
\cite{mm-gks, lanza2015generalized} and its extensions to recycling and 
streaming from \cite{pasha2023recycling}. These methods form the foundation 
for our nonlinear algorithms.

\subsection{Majorization-Minimization Approach.}

The $\ell_1$ regularization in equation \eqref{eq:mmgks_problem} is 
non-differentiable at zero, making standard gradient-based optimization 
challenging. The majorization-minimization (MM) approach addresses this by 
constructing a sequence of smooth quadratic approximations to
\[\mathcal{J}(\mathbf{u}) := \frac{1}{2}\|\mathbf{H}\mathbf{u} - 
\mathbf{b}\|_2^2 + \lambda\|\boldsymbol{\Psi}\mathbf{u}\|_1.\]

Given a current iterate $\mathbf{u}^{(k)}$, we define 
$\mathbf{z}^{(k)} = \boldsymbol{\Psi}\mathbf{u}^{(k)}$ and construct 
diagonal weights:
\begin{equation}
\mathbf{P}_\epsilon^{(k)} = \mathrm{diag}\left({\sqrt{
(\mathbf{z}^{(k)}_i)^2 + \epsilon^2}}\right)^{-1/2},
\end{equation}
where $\epsilon > 0$ ensures differentiability. These weights give rise to 
a quadratic majorant:
\begin{equation}
\label{eq:quadratic_majorant}
\mathcal{Q}(\mathbf{u} \mid \mathbf{u}^{(k)}) = \frac{1}{2}\|\mathbf{H}
\mathbf{u} - \mathbf{b}\|_2^2 + \frac{\lambda}{2}\|\mathbf{P}_\epsilon^{(k)}
\boldsymbol{\Psi}\mathbf{u}\|_2^2 + c,
\end{equation}
where $c$ is a constant independent of $\mathbf{u}$. This majorant satisfies:
(i) $\mathcal{Q}(\mathbf{u}^{(k)} \mid \mathbf{u}^{(k)}) = 
\mathcal{J}(\mathbf{u}^{(k)})$,
(ii) $\nabla\mathcal{Q}(\mathbf{u}^{(k)} \mid \mathbf{u}^{(k)}) = 
\nabla\mathcal{J}(\mathbf{u}^{(k)})$, and
(iii) $\mathcal{Q}(\mathbf{u} \mid \mathbf{u}^{(k)}) \geq \mathcal{J}(\mathbf{u})$ 
for all $\mathbf{u}$.
Minimizing the majorant leads to the normal equations:
\begin{equation}
\label{eq:normal_equations}
(\mathbf{H}^\top\mathbf{H} + \lambda\boldsymbol{\Psi}^\top(\mathbf{P}_\epsilon^{(k)})^2
\boldsymbol{\Psi})\mathbf{u} = \mathbf{H}^\top\mathbf{b}.
\end{equation}
The MM principle guarantees that minimizing the majorant decreases the original 
objective, ensuring convergence to a stationary point under mild conditions.

\subsection{Generalized Krylov Subspace Projection.}

Solving \eqref{eq:normal_equations} directly is impractical for large-scale 
problems. MM-GKS instead projects onto a small subspace spanned by generalized 
Krylov vectors that capture information about both the data misfit and the 
regularization. Starting from an initial basis $\mathbf{V}_\ell \in 
\mathbb{R}^{n \times \ell}$ with orthonormal columns (generated by Golub-Kahan 
bidiagonalization applied to $\mathbf{H}$), we seek an approximate solution 
$\mathbf{u}^{(k+1)} = \mathbf{V}_\ell\mathbf{z}$ by computing QR factorizations
\begin{equation}
\mathbf{H}\mathbf{V}_\ell = \mathbf{Q}_A\mathbf{R}_A, \quad 
\mathbf{P}_\epsilon^{(k)}\boldsymbol{\Psi}\mathbf{V}_\ell = \mathbf{Q}_\Psi\mathbf{R}_\Psi,
\end{equation}
and solving the reduced least-squares problem:
\begin{equation}
\label{eq:reduced_problem}
\mathbf{z}^{(k+1)} = \arg\min_{\mathbf{z} \in \mathbb{R}^\ell} 
\left\|\begin{bmatrix} \mathbf{R}_A \\ \sqrt{\lambda}\mathbf{R}_\Psi 
\end{bmatrix}\mathbf{z} - \begin{bmatrix} \mathbf{Q}_A^\top\mathbf{b} \\ 
\mathbf{0} \end{bmatrix}\right\|_2^2.
\end{equation}
The regularization parameter $\lambda$ is selected at each iteration using 
the discrepancy principle applied to the reduced problem (generalized cross validation (GCV) may be used 
alternatively when the noise level is unknown). If the residual of the 
normal equations is not sufficiently small, the subspace is expanded by 
appending the normalized residual direction:
\begin{equation}
\mathbf{r}^{(k+1)} = \mathbf{H}^\top(\mathbf{H}\mathbf{u}^{(k+1)} - 
\mathbf{b}) + \lambda\boldsymbol{\Psi}^\top(\mathbf{P}_\epsilon^{(k)})^2
\boldsymbol{\Psi}\mathbf{u}^{(k+1)},
\end{equation}
\begin{equation}
\mathbf{V}_{\ell+1} = [\mathbf{V}_\ell,\; \mathbf{r}^{(k+1)}/
\|\mathbf{r}^{(k+1)}\|_2].
\end{equation}
This expansion is repeated, updating the weights $\mathbf{P}_\epsilon^{(k)}$ 
at each iteration, until convergence. The full MM-GKS algorithm is given in 
Algorithm~\ref{alg:mmgks} of the supplementary material.

\subsection{Recycling: RMM-GKS.}
\label{sec:rmmgks}

For large-scale problems requiring many iterations, MM-GKS exhausts available 
memory as the subspace dimension grows without bound. The RMM-GKS algorithm 
\cite{pasha2023recycling} addresses this through an enlarge-compress cycle that 
keeps memory bounded while preserving the most informative search directions.

\paragraph{Enlargement.} Starting from a basis of dimension $k_{\min}$, 
MM-GKS expansion steps are performed as described above, growing the subspace 
up to a maximum dimension $k_{\max}$.

\paragraph{Compression.} When the basis reaches $k_{\max}$ columns, it is 
compressed back to $k_{\min}$ columns by applying a compression function 
$\chi$ that computes the truncated SVD \footnote{Because $H_{k_{max}}$ is relatively small, computing the tSVD is inexpensive.} of the stacked matrix
\begin{equation}
\mathbf{H}_{k_{\max}} :=
\begin{bmatrix} \mathbf{R}_A \\ \sqrt{\lambda_{\mathrm{curr}}}\mathbf{R}_\Psi 
\end{bmatrix},
\end{equation}
and retains the $k_{\min}-1$ right singular vectors corresponding to the 
largest singular values, forming $\mathbf{W} \in \mathbb{R}^{k_{\max} \times 
(k_{\min}-1)}$. The compressed basis is $\tilde{\mathbf{V}} = \mathbf{V}_{k_{\max}}
\mathbf{W}$. To guarantee monotonic decrease of the objective, the normalized 
component of the current solution orthogonal to $\tilde{\mathbf{V}}$ is appended, 
forming the new $k_{\min}$-dimensional basis. This enlarge-compress cycle repeats 
until convergence. The Enlarge and Compress subroutines are given as 
Algorithms~\ref{alg:enlarge} and~\ref{alg:compress} in the supplementary material; 
the full RMM-GKS algorithm is given as Algorithm~\ref{alg:rmmgks} below.

\begin{algorithm}
\caption{RMM-GKS (with optional fixed $\boldsymbol{\lambda}$)}
\label{alg:rmmgks}
\begin{algorithmic}[1]
\REQUIRE $\mathbf{H},\boldsymbol{\Psi},\mathbf{d},\mathbf{u}^{(0)},\bV^{(0)},
k_{\min},k_{\max},\epsilon,\text{tol}_1,\lambda_{\mathrm{fix}}$
\ENSURE $(\mathbf{u}^{(i+1)},\mathbf{V}_{k_{\min}},\lambda^{(i)})$

\STATE $s = k_{\max}-k_{\min}$
\IF{$\bV^{(0)}$ is given}
    \STATE $\mathbf{V}_{k_{\min}}$ = $\bV^{(0)}$
\ELSE
    \STATE Initialize basis $\mathbf{V}_{k_{\min}}$ via Golub-Kahan 
    bidiagonalization
\ENDIF

\FOR{$i=1,2,\ldots,i_{\max}$}
    \STATE $(\mathbf{u}^{(i+1)},\lambda^{(i+1)},\mathbf{V}_{k_{\max}},
    R_H,R_\Psi) = \mathrm{Enlarge}(
        \mathbf{H},\boldsymbol{\Psi},
        \mathbf{V}_{k_{\min}+1},
        \mathbf{d},\mathbf{u}^{(i)},
        \epsilon,s,\text{tol}_1,
        \lambda_{\mathrm{fix}})$

    \STATE $\mathbf{V}_{k_{\min}} = \mathrm{Compress}(
        \mathbf{V}_{k_{\max}},R_H,R_\Psi,
        \mathbf{d},\mathbf{u}^{(i+1)},Q_H,
        k_{\min},\lambda^{(i+1)})$

    \STATE Update weights $\mathbf{P}^{(i+1)}_\epsilon$ from $\mathbf{u}^{(i+1)}$
    \STATE $\mathbf{r}^{(i+1)} = \mathbf{H}^\top(\mathbf{H}\mathbf{u}^{(i+1)} 
    - \mathbf{b}) + \lambda^{(i+1)}\boldsymbol{\Psi}^\top
    (\mathbf{P}^{(i+1)}_\epsilon)^2\boldsymbol{\Psi}\mathbf{u}^{(i+1)}$
    \STATE $\mathbf{r}^{(i+1)} \leftarrow \mathbf{r}^{(i+1)} - 
    \mathbf{V}_{k_{\min}}\mathbf{V}_{k_{\min}}^\top\mathbf{r}^{(i+1)}$
    \STATE $\mathbf{V}_{k_{\min}+1} \leftarrow [\mathbf{V}_{k_{\min}}, 
    \mathbf{r}^{(i+1)}/\|\mathbf{r}^{(i+1)}\|_2]$
\ENDFOR
\STATE \textbf{Output:} $(\mathbf{u}^{(i+1)},\mathbf{V}_{k_{\min}},
\lambda^{(i+1)})$
\end{algorithmic}
\end{algorithm}

\subsection{Streaming: s-RMM-GKS.}
\label{sec:srmmgks}

The streaming variant of RMM-GKS \cite{pasha2023recycling} addresses scenarios 
where data arrives sequentially or memory constraints prevent processing the full 
dataset simultaneously. Given blocks $\{\mathbf{H}_j, \mathbf{b}_j\}_{j=1}^N$ 
as defined in Section~\ref{sec:streaming_setup}, we process each block in 
sequence, initializing with the solution $\mathbf{u}^{(j-1)}$ and compressed 
basis $\mathbf{V}^{(j-1)}_{k_{\min}}$ carried over from the previous block. 
This allows reconstruction from arbitrarily large datasets with constant memory 
requirements, as formalized in Algorithm~\ref{alg:s_rmmgks}.

\begin{algorithm}
\caption{s-RMM-GKS}
\label{alg:s_rmmgks}
\begin{algorithmic}[1]
\REQUIRE Blocks $\{(\mathbf{H}_j, \mathbf{d}_j)\}_{j=1}^N$, 
regularizer $\boldsymbol{\Psi}$, initial iterate $\mathbf{u}^{(0)}$, 
basis size $(k_{\min}, k_{\max})$, parameter $\epsilon$, 
tolerance $\text{tol}_1$
\ENSURE Approximate streaming solution $\mathbf{u}^*$

\STATE Initialize $\mathbf{V}^{(0)}_{k_{\min}}$

\FOR{$j = 1$ to $N$}
    \STATE $(\mathbf{u}^{(j)},\, \mathbf{V}^{(j)}_{k_{\min}})
    = \mathrm{RMM\mbox{-}GKS}\!
    \left(
        \mathbf{H}_j,\,\boldsymbol{\Psi},\,\mathbf{d}_j,\,
        \mathbf{u}^{(j-1)},\,\mathbf{V}^{(j-1)}_{k_{\min}},\,
        k_{\min},\,k_{\max},\,\epsilon,\,\text{tol}_1
    \right)$
\ENDFOR
\STATE $\mathbf{u}^* = \mathbf{u}^{(N)}$
\end{algorithmic}
\end{algorithm}

\section{Nonlinear RMM-GKS Framework.}
\label{sec:nonlinear}

We now extend RMM-GKS to handle forward operators $\mathbf{H}(\mathbf{p})$ 
that depend nonlinearly on unknown parameters $\mathbf{p}$. We present two 
complementary realizations of the joint optimization problem
\begin{equation}
\label{eq:nonlinear_joint}
\min_{\mathbf{u}, \mathbf{p}} \frac{1}{2}\|\mathbf{H}(\mathbf{p})\mathbf{u} 
- \mathbf{b}\|_2^2 + \lambda\|\boldsymbol{\Psi}\mathbf{u}\|_1:
\end{equation}
\begin{enumerate}
    \item \textbf{Alternating Minimization (AltMin):} Decouples the joint 
    problem by alternating between image updates via RMM-GKS and parameter 
    updates via Gauss-Newton steps.
    \item \textbf{Variable Projection (VarPro):} Eliminates the image variable 
    through an implicit function, reducing to optimization over parameters only.
\end{enumerate}
Both approaches share the same inner MM-GKS machinery and differ only in how 
parameter updates are coupled to image reconstruction. Problem~\eqref{eq:nonlinear_joint} 
is nonlinear in $\mathbf{p}$ and non-smooth in $\mathbf{u}$; the former requires 
Gauss-Newton iteration while the latter is handled by the majorization-minimization 
approach of Section~\ref{sec:background}.

\subsection{Alternating Minimization (AltMin).}
\label{sec:altmin}

Alternating minimization decouples the joint problem into two 
subproblems solved iteratively, fully optimizing each variable while holding 
the other fixed.

\subsubsection{Image Update.}
With the current parameter estimate $\mathbf{p}^{(k)}$ fixed, we solve for 
the image by constructing the quadratic majorant and minimizing over a recycled 
Krylov subspace:
\begin{equation}
\label{eq:altmin_image}
\mathbf{u}^{(k+1)} = \argmin_{\mathbf{u}} \frac{1}{2}\|\mathbf{H}(\mathbf{p}^{(k)})
\mathbf{u} - \mathbf{b}\|_2^2 + \frac{\lambda}{2}\|\mathbf{P}_\epsilon^{(k)}
\boldsymbol{\Psi}\mathbf{u}\|_2^2,
\end{equation}
where $\mathbf{P}_\epsilon^{(k)}$ is computed from $\mathbf{u}^{(k)}$ as in 
Section~\ref{sec:background}. We apply the enlarge-compress cycle of RMM-GKS, 
projecting onto subspaces of dimension between $k_{\min}$ and $k_{\max}$ and 
selecting $\lambda$ adaptively via the discrepancy principle on the reduced 
problem. Although $\mathbf{H}(\mathbf{p}^{(k)})$ changes across outer iterations 
as parameters are updated, within each inner RMM-GKS solve the operator is fixed, 
so RMM-GKS applies without modification.

\subsubsection{Parameter Update.}
With the updated image $\mathbf{u}^{(k+1)}$ fixed, we minimize the data 
fidelity term with respect to parameters using damped Gauss-Newton iteration 
with backtracking line search:
\begin{equation}
\label{eq:altmin_param}
\mathbf{p}^{(k+1)} = \arg\min_{\mathbf{p}} \frac{1}{2}\|\mathbf{H}(\mathbf{p})
\mathbf{u}^{(k+1)} - \mathbf{b}\|_2^2.
\end{equation}
At each inner iteration $\ell$, the Gauss-Newton direction solves
\begin{equation}
\Delta\mathbf{p}^{(\ell)} = \arg\min_{\Delta\mathbf{p}} \|\mathbf{J}^{(\ell)}
\Delta\mathbf{p} + \mathbf{r}^{(\ell)}\|_2^2,
\end{equation}
where $\mathbf{J}^{(\ell)} = \left.\frac{\partial}{\partial\mathbf{p}}
(\mathbf{H}(\mathbf{p})\mathbf{u}^{(k+1)})\right|_{\mathbf{p}=\mathbf{p}^{(\ell)}}$ 
and $\mathbf{r}^{(\ell)} = \mathbf{H}(\mathbf{p}^{(\ell)})\mathbf{u}^{(k+1)} - 
\mathbf{b}$. The step size $\alpha^{(\ell)} \in (0,1]$ is chosen via backtracking 
line search satisfying the Armijo condition, and inner iterations continue until 
$\|\Delta\mathbf{p}^{(\ell)}\| < \text{tol}_p$ or $\ell = \ell_{\max}^p$. 
The full parameter update procedure is given in Algorithm~\ref{alg:update_param}.

\begin{algorithm}
\caption{UPDATE-PARAM (AltMin parameter update at outer iteration $k$)}
\label{alg:update_param}
\begin{algorithmic}[1]
\REQUIRE $\mathbf{u}^{(k)},\,\mathbf{b},\,\mathbf{p}^{(k-1)} \in 
\mathbb{R}^{n_p}$, $\text{maxiter}_p \geq 1$, damping $\mu > 0$
\ENSURE $\mathbf{p}^{(k)}$
\STATE Initialize: $\mathbf{p}^{(k-1,0)} \leftarrow \mathbf{p}^{(k-1)}$
\FOR{$\ell=0,\ldots,\text{maxiter}_p-1$}
    \STATE $\mathbf{g}^{(k,\ell)} = \nabla_{\mathbf{p}} 
    \mathcal{J}_{\epsilon,\lambda}(\mathbf{u}^{(k)}, \mathbf{p}^{(k-1,\ell)})$
    \FOR{$i=1,\ldots,n_p$}
        \STATE $\mathbf{H}_{p_i} = \frac{\partial\mathbf{H}
        (\mathbf{p}^{(k-1,\ell)})}{\partial p_i}$
        \STATE $[\mathbf{J}_{\mathbf{u}}]_{:,i} = \mathbf{H}_{p_i}\,\mathbf{u}^{(k)}$
    \ENDFOR
    \STATE $\mathbf{J}_{\mathbf{p}}^{(k,\ell)} = \mathbf{J}_{\mathbf{u}}^\top
    \mathbf{J}_{\mathbf{u}} + \mu \mathbf{I}$
    \STATE Solve $\mathbf{J}_{\mathbf{p}}^{(k,\ell)}\,\mathbf{d}^{(k,\ell)} = 
    -\mathbf{g}^{(k,\ell)}$
    \STATE Find $\alpha^{(k,\ell)}$ satisfying Armijo condition
    \STATE $\mathbf{p}^{(k-1,\ell+1)} = \mathbf{p}^{(k-1,\ell)} + 
    \alpha^{(k,\ell)}\,\mathbf{d}^{(k,\ell)}$
\ENDFOR
\STATE $\mathbf{p}^{(k)} \leftarrow \mathbf{p}^{(k-1,\text{maxiter}_p)}$
\end{algorithmic}
\end{algorithm}

\subsection{Variable Projection (VarPro).}
\label{sec:varpro}

The variable projection approach eliminates $\mathbf{u}$ by solving the 
majorized image subproblem implicitly, reducing the joint problem to 
optimization over $\mathbf{p}$ alone.

\paragraph{Majorization and Closed-Form Solution.}
Given an iterate $\mathbf{u}^{(k)}$, we form weights $\mathbf{P}^{(k)} = 
\mathrm{diag}(( (\boldsymbol{\Psi}\mathbf{u}^{(k)})_j^2 + \epsilon^2)^{-1/4})$ 
and define the weight operator $\hat{\boldsymbol{\Psi}}^{(k)} = 
\mathbf{P}^{(k)}\boldsymbol{\Psi}$. For fixed $\mathbf{p}$ and 
$\mathbf{P}^{(k)}$, the majorized subproblem
\begin{equation}
\label{eq:majorized_subproblem}
\min_{\mathbf{u}} \frac{1}{2}\|\mathbf{H}(\mathbf{p})\mathbf{u}-\mathbf{b}\|_2^2
+ \frac{\lambda}{2} \|\hat{\boldsymbol{\Psi}}^{(k)}\mathbf{u}\|_2^2
\end{equation}
has the closed-form solution
\begin{equation}
\label{eq:u_closed_form}
\mathbf{u}(\mathbf{p}) = \left( \mathbf{H}(\mathbf{p})^\top \mathbf{H}(\mathbf{p}) 
+ \lambda (\hat{\boldsymbol{\Psi}}^{(k)})^\top\hat{\boldsymbol{\Psi}}^{(k)} 
\right)^{-1} \mathbf{H}(\mathbf{p})^\top \mathbf{b}.
\end{equation}

\paragraph{Reduced Objective and Gauss-Newton Update.}
Substituting \eqref{eq:u_closed_form} into the objective defines the reduced 
function $f(\mathbf{p}) = \frac{1}{2}\|\mathbf{r}(\mathbf{p})\|_2^2 + 
\frac{\lambda}{2}\|\mathbf{s}(\mathbf{p})\|_2^2$, where $\mathbf{r}(\mathbf{p}) 
= \mathbf{H}(\mathbf{p})\mathbf{u}(\mathbf{p}) - \mathbf{b}$ and 
$\mathbf{s}(\mathbf{p}) = \hat{\boldsymbol{\Psi}}^{(k)}\mathbf{u}(\mathbf{p})$. 
The Gauss-Newton update solves
\begin{equation}
\label{eq:GN_system}
\left( \mathbf{J}_{\mathbf{r}}^\top \mathbf{J}_{\mathbf{r}}
+ \lambda\,\mathbf{J}_{\mathbf{s}}^\top \mathbf{J}_{\mathbf{s}} \right) 
\Delta\mathbf{p} = -\left( \mathbf{J}_{\mathbf{r}}^\top \mathbf{r}
+ \lambda\,\mathbf{J}_{\mathbf{s}}^\top \mathbf{s} \right),
\end{equation}
where the Jacobian columns $[\mathbf{J}_{\mathbf{r}}]_{(:,j)}$ and 
$[\mathbf{J}_{\mathbf{s}}]_{(:,j)}$ are computed via the sensitivity equation
\begin{equation}
\label{eq:sensitivity}
\mathbf{M}_\lambda(\mathbf{p})\,\mathbf{u}_{p_j} = -\left( 
\mathbf{H}_{p_j}(\mathbf{p})^\top \mathbf{r}(\mathbf{p}) + 
\mathbf{H}(\mathbf{p})^\top \mathbf{H}_{p_j}(\mathbf{p})\,\mathbf{u}(\mathbf{p}) 
\right),
\end{equation}
with $\mathbf{M}_\lambda(\mathbf{p}) = \mathbf{H}(\mathbf{p})^\top\mathbf{H}(\mathbf{p}) 
+ \lambda(\hat{\boldsymbol{\Psi}}^{(k)})^\top\hat{\boldsymbol{\Psi}}^{(k)}$ and 
$\mathbf{H}_{p_j} = \partial\mathbf{H}/\partial p_j$. In practice the inner 
subproblem~\eqref{eq:majorized_subproblem} is solved inexactly via RMM-GKS, 
and the weights $\mathbf{P}^{(k)}$ are updated each outer iteration. The 
VarPro parameter update procedure is given in Algorithm~\ref{alg:update_varpro} 
of the supplementary material.

\subsection{High-Regularization Stabilization.}
\label{sec:hireg}

For both AltMin and VarPro, when the adaptively chosen $\lambda^{(k)}$ is 
small, the image update $\mathbf{u}^{(k+1)}$ may be insufficiently regularized, 
causing the subsequent parameter update to be attracted to poor local minima. 
To mitigate this, we optionally perform a second RMM-GKS solve with a 
significantly larger regularization parameter $\lambda_{\mathrm{hi}} \gg 
\lambda^{(k)}$, producing a smoother image estimate $\hat{\mathbf{u}}^{(k+1)}$ 
that is used \emph{only} for the parameter update. The primary solution 
$\mathbf{u}^{(k+1)}$ (obtained with the adaptively chosen $\lambda^{(k)}$) 
is retained as the image reconstruction output and its associated compressed 
basis $\mathbf{V}^{(k+1)}_{k_{\min}}$ is carried forward to the next outer 
iteration. The high-regularization solve uses the basis output of the primary 
solve as its starting point but produces a separate compressed basis that is 
discarded after the parameter update. This stabilization strategy is summarized 
in Algorithm~\ref{alg:nl_rmmgks_two_solves}.

\begin{algorithm}
\caption{NL-RMM-GKS}
\label{alg:nl_rmmgks_two_solves}
\begin{algorithmic}[1]
\REQUIRE $\mathbf{b},\boldsymbol{\Psi},\mathbf{p}^{(0)},k_{\min},k_{\max},\epsilon$
\ENSURE $(\mathbf{u}^*,\mathbf{p}^*)$
\STATE Initialize $\mathbf{V}^{(0)}_{k_{\min}}$
\FOR{$k = 0,1,2,\ldots$}
    \STATE $\mathbf{H}_k = \mathbf{H}(\mathbf{p}^{(k)})$
    \STATE \textbf{Primary image solve:}
    \[
    (\mathbf{u}^{(k+1)},\lambda^{(k)},\mathbf{V}^{(k+1)}_{k_{\min}}) = 
    \mathrm{RMM\text{-}GKS}(
    \mathbf{H}_k,\boldsymbol{\Psi}, \mathbf{b},\mathbf{u}^{(k)}, 
    \mathbf{V}^{(k)}_{k_{\min}}, k_{\min},k_{\max},\epsilon)
    \]
    \STATE \textbf{Optional high-regularization solve:}
    \[
    (\hat{\mathbf{u}}^{(k+1)},\hat{\lambda}^{(k)},\hat{\mathbf{V}}_{k_{\min}}) = 
    \mathrm{RMM\text{-}GKS}(
    \mathbf{H}_k,\boldsymbol{\Psi}, \mathbf{b},\mathbf{u}^{(k+1)}, 
    \mathbf{V}^{(k+1)}_{k_{\min}}, k_{\min},k_{\max},\epsilon;
    \lambda_{\mathrm{hi}})
    \]
    \hfill // $\hat{\mathbf{V}}_{k_{\min}}$ is discarded after this step
    \STATE \textbf{Parameter update:}
    \[
    \mathbf{p}^{(k+1)} = \mathrm{UPDATE\text{-}PARAM}(
    \mathbf{H}_k,\boldsymbol{\Psi}, \hat{\mathbf{u}}^{(k+1)},\mathbf{b}, 
    \mathbf{p}^{(k)},\hat{\lambda}^{(k)})
    \]
    \STATE \textbf{Update weights:} $\mathbf{P}^{(k+1)}_\epsilon$ from 
    $\mathbf{u}^{(k+1)}$
    \STATE \textbf{Compute and project residual:}
    \STATE $\mathbf{r}^{(k+1)} = \mathbf{H}_k^\top(\mathbf{H}_k
    \mathbf{u}^{(k+1)} - \mathbf{b}) + \lambda^{(k)}
    \boldsymbol{\Psi}^\top(\mathbf{P}^{(k+1)}_\epsilon)^2
    \boldsymbol{\Psi}\mathbf{u}^{(k+1)}$
    \STATE $\mathbf{r}^{(k+1)} \leftarrow \mathbf{r}^{(k+1)} - 
    \mathbf{V}^{(k+1)}_{k_{\min}}\left(\mathbf{V}^{(k+1)}_{k_{\min}}\right)^\top
    \mathbf{r}^{(k+1)}$
    \STATE \textbf{Update search space:}
    \STATE $\mathbf{V}^{(k+1)}_{k_{\min}} \leftarrow 
    [\mathbf{V}^{(k+1)}_{k_{\min}},\; 
    \mathbf{r}^{(k+1)}/\|\mathbf{r}^{(k+1)}\|_2]$
    \STATE Check outer convergence
\ENDFOR
\STATE $\mathbf{u}^*=\mathbf{u}^{(k+1)},\;\mathbf{p}^*=\mathbf{p}^{(k+1)}$
\end{algorithmic}
\end{algorithm}

\subsection{Convergence Analysis.}
\label{sec:convergence}

The following assumptions are standard in nonlinear optimization and hold 
for common imaging operators such as CT with angular perturbations and PAT 
with radial shifts.

\begin{assumption}[Regularity conditions]
\label{assump:regularity}
(i) The forward operator $\mathbf{H}(\mathbf{p})$ is twice continuously 
differentiable with bounded derivatives: $\|\partial\mathbf{H}/\partial p_i\|_2 
\leq C_H$ and $\|\partial^2\mathbf{H}/\partial p_i\partial p_j\|_2 \leq C_{HH}$. 
(ii)The iterates are bounded: $\|\mathbf{u}^{(k)}\|_2 \leq R_u$ and 
$\|\mathbf{p}^{(k)}\|_2 \leq R_p$. (iii) The damped Gauss-Newton matrix satisfies 
$\gamma_{\min}\mathbf{I} \preceq \mathbf{J}_{\mathbf{p}}^{(k,\ell)} \preceq 
\gamma_{\max}\mathbf{I}$ for constants $0 < \gamma_{\min} \leq \gamma_{\max} 
< \infty$, where $\gamma_{\min} \geq \mu$ is ensured by the damping parameter.
\end{assumption}
\begin{theorem}[Convergence to stationary point]
\label{thm:convergence}
Let $\{(\mathbf{u}^{(k)}, \mathbf{p}^{(k)})\}$ be the sequence generated by 
Algorithm~\ref{alg:nl_rmmgks_two_solves} under Assumption~\ref{assump:regularity}. 
Then every limit point $(\mathbf{u}^*, \mathbf{p}^*)$ is a stationary point 
of~\eqref{eq:nonlinear_joint}, satisfying
\begin{equation}
\nabla_{\mathbf{u}} \mathcal{J}_{\epsilon,\lambda}(\mathbf{u}^*, \mathbf{p}^*) 
= \mathbf{0} \quad \text{and} \quad \nabla_{\mathbf{p}} 
\mathcal{J}_{\epsilon,\lambda}(\mathbf{u}^*, \mathbf{p}^*) = \mathbf{0}.
\end{equation}
\end{theorem}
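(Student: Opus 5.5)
We argue by a block-coordinate-descent energy argument on the smoothed objective
\[
\mathcal{J}_{\epsilon,\lambda}(\mathbf{u},\mathbf{p}) = \tfrac12\|\mathbf{H}(\mathbf{p})\mathbf{u}-\mathbf{b}\|_2^2 + \lambda\sum_i \sqrt{(\boldsymbol{\Psi}\mathbf{u})_i^2+\epsilon^2},
\]
with $\lambda$ treated as fixed (or eventually constant, which is how we interpret the adaptive selection in the limit). The proof has three steps.

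\textbf{Step 1: sufficient decrease in each block.} First we show that each outer iteration decreases $\mathcal{J}_{\epsilon,\lambda}$ by an amount controlled by the step lengths.

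For the image block, the quadratic majorant \eqref{eq:quadratic_majorant}, with $\mathbf{p}=\mathbf{p}^{(k)}$ frozen, satisfies properties (i)--(iii). The RMM-GKS compression step appends the normalized component of the current iterate orthogonal to $\tilde{\mathbf{V}}$, so $\mathbf{u}^{(k)}$ always lies in the search space. Hence the projected minimizer satisfies
\[
\mathcal{J}(\mathbf{u}^{(k+1)},\mathbf{p}^{(k)}) \le \mathcal{Q}(\mathbf{u}^{(k+1)}\mid \mathbf{u}^{(k)}) \le \mathcal{Q}(\mathbf{u}^{(k)}\mid\mathbf{u}^{(k)}) = \mathcal{J}(\mathbf{u}^{(k)},\mathbf{p}^{(k)}).
\]
To strengthen this to a sufficient-decrease estimate, we use that $\mathcal{Q}$ is strongly convex on the subspace. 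The Hessian $\mathbf{H}^\top\mathbf{H}+\lambda\boldsymbol{\Psi}^\top\mathbf{P}^2\boldsymbol{\Psi}$ has weights bounded below, since $\|\boldsymbol{\Psi}\mathbf{u}\|$ is bounded by Assumption~\ref{assump:regularity}(ii), and the null spaces of $\mathbf{H}(\mathbf{p})$ and $\boldsymbol{\Psi}$ intersect trivially. This gives
\[
\mathcal{J}^{(k)} - \mathcal{J}(\mathbf{u}^{(k+1)},\mathbf{p}^{(k)}) \ge c_u\|\mathbf{u}^{(k+1)}-\mathbf{u}^{(k)}\|^2 .
\]

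For the parameter block, Assumption~\ref{assump:regularity}(iii) makes $\mathbf{d}=-(\mathbf{J}_{\mathbf{p}})^{-1}\mathbf{g}$ a gradient-related direction:
\[
-\mathbf{g}^\top\mathbf{d}\ge \|\mathbf{g}\|^2/\gamma_{\max}, \qquad \|\mathbf{d}\|\le\|\mathbf{g}\|/\gamma_{\min}.
\]
Assumption~\ref{assump:regularity}(i),(ii) give a uniform Lipschitz constant $L_p$ for $\nabla_{\mathbf{p}}\mathcal{J}$ on the bounded set, of size $O(C_H^2R_u^2 + C_{HH}R_u\|\mathbf{r}\|)$. The standard Armijo argument then yields $\alpha^{(k,\ell)}\ge\bar\alpha>0$ and
\[
\mathcal{J}(\mathbf{u}^{(k+1)},\mathbf{p}^{(k+1)}) \le \mathcal{J}(\mathbf{u}^{(k+1)},\mathbf{p}^{(k)}) - c_p\|\mathbf{g}^{(k+1,0)}\|^2 .
\]

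\textbf{Step 2: summability and limit points.} Since $\mathcal{J}_{\epsilon,\lambda}\ge 0$, summing the decreases gives
\[
\sum_k \|\mathbf{u}^{(k+1)}-\mathbf{u}^{(k)}\|^2 < \infty \qquad\text{and}\qquad \sum_k\|\nabla_{\mathbf{p}}\mathcal{J}(\mathbf{u}^{(k+1)},\mathbf{p}^{(k)})\|^2<\infty .
\]
Now take a subsequence converging to $(\mathbf{u}^*,\mathbf{p}^*)$.

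For the $\mathbf{p}$-gradient, $\mathbf{u}^{(k+1)}-\mathbf{u}^{(k)}\to0$ and continuity of $\nabla_{\mathbf{p}}\mathcal{J}$ give $\nabla_{\mathbf{p}}\mathcal{J}(\mathbf{u}^*,\mathbf{p}^*)=\mathbf{0}$. This also requires $\mathbf{p}^{(k+1)}-\mathbf{p}^{(k)}\to0$, which follows from $\|\mathbf{d}\|\le\|\mathbf{g}\|/\gamma_{\min}$ together with the summability just obtained.

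For the $\mathbf{u}$-gradient, by property (ii) of the majorant $\nabla_{\mathbf{u}}\mathcal{J}(\mathbf{u}^{(k)},\mathbf{p}^{(k)})$ equals the normal-equation residual $\mathbf{r}^{(k)}$ at the majorant's expansion point. So it suffices to show $\mathbf{r}^{(k)}\to0$ along the subsequence.

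\textbf{Step 3: the main obstacle.} The difficulty is exactly this last claim, because the image update is only a subspace minimizer and not an exact minimizer of the majorant. The mechanism we use is that Algorithm~\ref{alg:nl_rmmgks_two_solves} appends the projected residual $\mathbf{r}^{(k+1)}$ to the basis. The next subspace minimization therefore achieves at least the decrease of an exact line search along $-\mathbf{r}^{(k+1)}$, namely
\[
\ge \|\mathbf{r}^{(k+1)}\|^2/(2\Lambda_{\max}), \qquad \Lambda_{\max} = C^2 + \lambda\|\boldsymbol{\Psi}\|^2/\epsilon
\]
bounding the majorant Hessian. This forces $\sum_k\|\mathbf{r}^{(k)}\|^2<\infty$.

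Two mismatches must be controlled for this to work:
\begin{itemize}
\item the residual is formed with $\mathbf{H}_k$, but the next solve uses $\mathbf{H}_{k+1}$;
\item the weights are refreshed between the residual computation and the next solve.
\end{itemize}
We bound the resulting perturbation by $C_H R_u\|\mathbf{p}^{(k+1)}-\mathbf{p}^{(k)}\|$ plus a Lipschitz term in $\|\mathbf{u}^{(k+1)}-\mathbf{u}^{(k)}\|$. Both increments vanish by Step~2, so the bound goes to zero.

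We also have to handle the optional high-regularization solve. It affects only which image feeds the $\mathbf{p}$-step. If it is used, we either require $\hat{\mathbf{u}}^{(k+1)}-\mathbf{u}^{(k+1)}\to0$, or we accept the $\mathbf{p}$-step only when it does not increase $\mathcal{J}(\mathbf{u}^{(k+1)},\cdot)$. The second option is a safeguard that we would state explicitly as part of the argument.
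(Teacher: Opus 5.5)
Your argument has the same skeleton as the paper's. For the image block you use majorization plus an exact line search along the appended residual; your $\Lambda_{\max}$ is the paper's $\bar\mu$ from Lemma~\ref{lem:Q_bound}. For the parameter block you use damped Gauss--Newton with Armijo backtracking, and you finish by telescoping the nonnegative smoothed objective with $\lambda$ held fixed. Where you depart, you are more careful than the paper. Lemma~\ref{lem:u_descent} assumes that $\nabla_{\mathbf{u}}\mathcal{J}(\mathbf{u}^{(k)},\mathbf{p}^{(k)})$ itself lies in the search space, and tacitly that $\mathbf{u}^{(k)}$ does. Algorithm~\ref{alg:nl_rmmgks_two_solves} instead appends $\tilde{\mathbf{r}}=\nabla_{\mathbf{u}}\mathcal{J}(\mathbf{u}^{(k+1)},\mathbf{p}^{(k)})$, formed with $\mathbf{H}_k$, and then solves with $\mathbf{H}_{k+1}$. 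Your perturbation step repairs this. Both $\mathbf{u}^{(k+1)}$ (via compression) and $\tilde{\mathbf{r}}$ lie in the next search space. So the line search along $-\tilde{\mathbf{r}}$ on the majorant at $(\mathbf{u}^{(k+1)},\mathbf{p}^{(k+1)})$ decreases the objective by at least $(\|\tilde{\mathbf{r}}\|_2-\delta_k)_+^2/(2\bar\mu)$, where $\delta_k=\|\nabla_{\mathbf{u}}\mathcal{J}(\mathbf{u}^{(k+1)},\mathbf{p}^{(k+1)})-\tilde{\mathbf{r}}\|_2=O(\|\mathbf{p}^{(k+1)}-\mathbf{p}^{(k)}\|_2)$. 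Hence $\nabla_{\mathbf{u}}\mathcal{J}(\mathbf{u}^{(k)},\mathbf{p}^{(k)})\to\mathbf{0}$. This gives convergence to zero rather than the summability you claim in Step~3, but convergence is all the limit-point statement needs.

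The one step not covered by Assumption~\ref{assump:regularity} is the strong-convexity bound $c_u\|\mathbf{u}^{(k+1)}-\mathbf{u}^{(k)}\|_2^2$. It requires $\mathcal{N}(\mathbf{H}(\mathbf{p}))\cap\mathcal{N}(\boldsymbol{\Psi})=\{\mathbf{0}\}$ uniformly on the bounded parameter set, which is not a hypothesis of the theorem. You can drop this step entirely:
\begin{itemize}
\item The weight mismatch you list does not occur. Algorithm~\ref{alg:nl_rmmgks_two_solves} refreshes $\mathbf{P}^{(k+1)}_\epsilon$ from $\mathbf{u}^{(k+1)}$ before forming $\mathbf{r}^{(k+1)}$, and the next RMM-GKS call builds its first weights from the same $\mathbf{u}^{(k+1)}$. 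So $\delta_k$ involves only the parameter increment, which vanishes by $\|\mathbf{d}\|_2\le\|\mathbf{g}\|_2/\gamma_{\min}$ and the Armijo summability of $\|\mathbf{g}\|_2^2$.
\item For $\nabla_{\mathbf{p}}\mathcal{J}(\mathbf{u}^*,\mathbf{p}^*)=\mathbf{0}$, shift the index as in Step~4 of the paper's proof. Then $\nabla_{\mathbf{p}}\mathcal{J}(\mathbf{u}^{(k_j)},\mathbf{p}^{(k_j-1)})\to\mathbf{0}$ and $\mathbf{p}^{(k_j-1)}\to\mathbf{p}^*$, so you never need $\mathbf{u}^{(k+1)}-\mathbf{u}^{(k)}\to\mathbf{0}$.
\end{itemize}
With this change your proof runs under exactly the paper's hypotheses, plus fixed $\lambda$, which the paper also assumes tacitly. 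It also tracks Algorithm~\ref{alg:nl_rmmgks_two_solves} more faithfully than the paper's own argument.

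On the high-regularization step, which the paper explicitly excludes from its guarantee, your Armijo safeguard works. Requiring only $\hat{\mathbf{u}}^{(k+1)}-\mathbf{u}^{(k+1)}\to\mathbf{0}$ does not by itself restore the descent chain. The decreases in $\mathcal{J}(\hat{\mathbf{u}}^{(k+1)},\cdot)$ and $\mathcal{J}(\mathbf{u}^{(k+1)},\cdot)$ differ by $O(\|\hat{\mathbf{u}}^{(k+1)}-\mathbf{u}^{(k+1)}\|_2\,\|\mathbf{p}^{(k+1)}-\mathbf{p}^{(k)}\|_2)$. Absorbing that difference into the telescoping sum needs square-summability of $\|\hat{\mathbf{u}}^{(k+1)}-\mathbf{u}^{(k+1)}\|_2$.
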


\begin{remark}
The proof establishes convergence by showing that each outer iteration produces 
sufficient descent in $\mathcal{J}_{\epsilon,\lambda}$: the image update decreases 
the objective by at least $\|\nabla_{\mathbf{u}}\mathcal{J}\|_2^2 / (2\bar{\mu})$, 
where $\bar{\mu} = C_{\mathbf{H}}^2 + \lambda\varepsilon^{-1}C_{\boldsymbol{\Psi}}^2$ 
bounds the majorant Hessian, while the Gauss-Newton parameter update with 
Armijo line search decreases it by at least $c_1\alpha_{\min}\gamma_{\max}^{-1}
\|\nabla_{\mathbf{p}}\mathcal{J}\|_2^2$. Since the objective is bounded below, 
both gradient norms are summable and hence converge to zero. The complete proof, 
including all technical lemmas and the extension to multiple inner iterations, 
is given in Appendix~\ref{sec:appendix_proof}.
\end{remark}

\begin{remark}
Theorem~\ref{thm:convergence} provides a first-order necessary condition for 
optimality. Global convergence to the global minimum cannot be guaranteed for 
nonconvex problems; however, our experiments in Section~\ref{sec:experiments} 
demonstrate that the method reliably recovers ground truth parameters when 
initialized via the coarse grid search described in 
Section~\ref{sec:experiments}.
\end{remark}
\begin{remark}[Relationship to prior work]
When $\mathbf{p}$ is known and fixed, Algorithm~\ref{alg:nl_rmmgks_two_solves} 
reduces exactly to RMM-GKS~\cite{pasha2023recycling} applied to the linear 
problem~\eqref{eq:mmgks_problem}. The dynamic extension presented in 
Section~\ref{sec:dynamic} further reduces to MMGKS-OF~\cite{okunola2025mmgks-of} 
(with optical flow) or the framework of~\cite{pasha2023computational} (with ANISO-TV) 
when $\mathbf{p}$ is known. NL-RMM-GKS therefore strictly generalizes all of 
these methods.
\end{remark}

\begin{remark}[Scope of convergence guarantee]
Theorem~\ref{thm:convergence} and its proof (Appendix~\ref{sec:appendix_proof}) assume that the parameter update operates on the same image $\mathbf{u}^{(k+1)}$ produced by the primary RMM-GKS solve. When the optional high-regularization stabilization (Section~4.3) is active, the parameter update instead uses the smoothed estimate $\hat{\mathbf{u}}^{(k+1)}$, and the descent chain requires an additional argument relating $\mathcal{J}(\hat{\mathbf{u}}^{(k+1)}, \mathbf{p}^{(k+1)})$ to $\mathcal{J}(\mathbf{u}^{(k+1)}, \mathbf{p}^{(k+1)})$. The convergence guarantee therefore applies directly to Algorithm~4.2 when the high-regularization step is omitted; extending it formally to the stabilized variant is left to future work.
\end{remark}

\section{Streaming Extensions.}
\label{sec:streaming}

We now extend both AltMin and VarPro realizations of NL-RMM-GKS to handle 
streaming data, where measurement blocks $\{(\mathbf{H}_j(\cdot), 
\mathbf{b}_j)\}_{j=1}^N$ arrive sequentially or cannot all be held in memory 
simultaneously (see Section~\ref{sec:streaming_setup} for the block 
partitioning setup). The key idea is to carry over three quantities between 
blocks: the current image estimate $\mathbf{u}^{(j)}$, the current parameter 
estimate $\mathbf{p}^{(j)}$, and the compressed Krylov basis 
$\mathbf{V}^{(j)}_{k_{\min}}$. Carrying over the basis is what distinguishes 
streaming NL-RMM-GKS from simply running NL-RMM-GKS independently on each 
block: the basis encodes solution structure learned from all previously seen 
data, allowing each new block to build on rather than discard prior information. 
Multiple passes over the full dataset are performed until the outer convergence 
criterion is met. When $\mathbf{p}$ is known and fixed, 
s-NL-RMM-GKS reduces exactly to s-RMM-GKS \cite{pasha2023recycling}, 
confirming that our framework is a strict generalization of the linear streaming 
method.

\subsection{Algorithm.}

At each pass, blocks $j = 1, \ldots, N$ are processed in sequence. For block 
$j$, we run NL-RMM-GKS (Algorithm~\ref{alg:nl_rmmgks_two_solves}) using only 
the data $(\mathbf{H}_j(\cdot), \mathbf{b}_j)$, initialized with the solution 
triple $(\mathbf{u}^{(j-1)}, \mathbf{p}^{(j-1)}, \mathbf{V}^{(j-1)}_{k_{\min}})$ 
carried over from the previous block. The outputs $(\mathbf{u}^{(j)}, 
\mathbf{p}^{(j)}, \mathbf{V}^{(j)}_{k_{\min}})$ are then passed to block $j+1$. 
At the end of each pass, convergence is checked on the full dataset; if not 
converged, the next pass begins with the final block's outputs used to 
initialize block 1. The full procedure is given in 
Algorithm~\ref{alg:s_nl_rmmgks_final}.

\begin{algorithm}
\caption{s-NL-RMM-GKS (Streaming Nonlinear RMM-GKS)}
\label{alg:s_nl_rmmgks_final}
\begin{algorithmic}[1]
\REQUIRE $\{(\mathbf{H}_j(\cdot),\mathbf{b}_j)\}_{j=1}^N$,
$\boldsymbol{\Psi}$,
$(\mathbf{u}^{(0)},\mathbf{p}^{(0)})$,
$k_{\min},k_{\max},\epsilon,\lambda_{\mathrm{fix}},\lambda_{\mathrm{hi}}$,
$\text{tol}_{\mathrm{outer}}$
\ENSURE $(\mathbf{u}^*,\mathbf{p}^*)$

\STATE Initialize $\mathbf{V}^{(0)}_{k_{\min}}$

\FOR{pass $= 1, 2, \ldots$ until outer convergence}

    \FOR{$j = 1$ \textbf{to} $N$}

        \STATE $\mathbf{H}_j = \mathbf{H}_j(\mathbf{p}^{(j-1)})$

        \STATE \textbf{Primary image solve:}
        \[
            (\mathbf{u}^{(j)},\mathbf{V}^{(j)}_{k_{\min}},\lambda^{(j)})
            = \mathrm{RMM\!-\!GKS}(
                \mathbf{H}_j,\boldsymbol{\Psi},
                \mathbf{b}_j,\mathbf{u}^{(j-1)},
                \mathbf{V}^{(j-1)}_{k_{\min}},
                k_{\min},k_{\max},\epsilon,\text{tol}_u)
        \]

        \STATE \textbf{Optional high-regularization solve:}
        \[
            (\hat{\mathbf{u}}^{(j)},\hat{\mathbf{V}}_{k_{\min}},\hat{\lambda}^{(j)})
            = \mathrm{RMM\!-\!GKS}(
                \mathbf{H}_j,\boldsymbol{\Psi},
                \mathbf{b}_j,\mathbf{u}^{(j)},
                \mathbf{V}^{(j)}_{k_{\min}},
                k_{\min},k_{\max},\epsilon,\text{tol}_u,
                \lambda_{\mathrm{hi}})
        \]
        \hfill // $\hat{\mathbf{V}}_{k_{\min}}$ discarded after this step

        \STATE \textbf{Parameter update:}
        \[
            \mathbf{p}^{(j)}
            = \mathrm{UPDATE\!-\!PARAM}(
                \mathbf{H}_j,\boldsymbol{\Psi},
                \hat{\mathbf{u}}^{(j)},\mathbf{b}_j,
                \mathbf{p}^{(j-1)},\hat{\lambda}^{(j)})
        \]

    \ENDFOR

    \STATE Check outer convergence on full dataset:
    \[
        \frac{\|\mathbf{u}^{(N)} - \mathbf{u}^{(N)}_{\mathrm{prev}}\|}
        {\|\mathbf{u}^{(N)}_{\mathrm{prev}}\|} < \text{tol}_{\mathrm{outer}}
        \quad \text{and} \quad
        \|\Delta\mathbf{p}^{(N)}\| < \text{tol}_p
    \]

\ENDFOR

\STATE $\mathbf{u}^* = \mathbf{u}^{(N)},\quad \mathbf{p}^* = \mathbf{p}^{(N)}$
\end{algorithmic}
\end{algorithm}

\subsection{Memory and Complexity Analysis.}
\label{sec:complexity}

A key advantage of the streaming approach is that memory requirements are 
bounded independently of the total dataset size. We summarize the dominant 
memory costs below.

\paragraph{Basis storage.} At any point in the algorithm, only the current 
compressed basis $\mathbf{V}^{(j)}_{k_{\min}} \in \mathbb{R}^{n \times k_{\min}}$ 
and the enlarged basis $\mathbf{V}^{(j)}_{k_{\max}} \in \mathbb{R}^{n \times 
k_{\max}}$ need to be stored. Since $k_{\min}$ and $k_{\max}$ are fixed 
hyperparameters, basis storage costs $\mathcal{O}(n k_{\max})$ regardless 
of the number of blocks $N$ or the number of passes. By contrast, standard 
MM-GKS without recycling requires $\mathcal{O}(n K)$ storage where $K$ is 
the total iteration count, which grows unboundedly.

\paragraph{Forward operator storage.} Only the current block 
$\mathbf{H}_j(\mathbf{p}^{(j-1)}) \in \mathbb{R}^{m_j \times n}$ needs to 
be formed and stored at any one time, costing $\mathcal{O}(m_j n)$ where 
$m_j = m/N$ for equal-sized blocks. The full operator $\mathbf{H}(\mathbf{p}) 
\in \mathbb{R}^{m \times n}$ is never formed explicitly, giving an 
$\mathcal{O}(N)$ reduction in operator storage compared to the non-streaming 
case.

\paragraph{Parameter and image storage.} The image $\mathbf{u}^{(j)} \in 
\mathbb{R}^n$ and parameter vector $\mathbf{p}^{(j)} \in \mathbb{R}^{n_p}$ 
are carried between blocks at cost $\mathcal{O}(n + n_p)$, which is 
independent of $N$.

\paragraph{Total memory.} The dominant cost is basis storage 
$\mathcal{O}(nk_{\max})$, which is fixed by the recycling window size and 
independent of $N$, the number of passes, and the total number of measurements 
$m$. In practice, the $\mathcal{O}(N)$ reduction in operator storage is the 
more significant saving for large-scale problems, as $m \gg n$ is common in 
tomographic applications. The trade-off between $N$ and reconstruction quality 
is explored empirically in Section~\ref{sec:experiments}.

\paragraph{Per-pass complexity.} Each pass over all $N$ blocks requires 
$N \cdot \mathcal{O}(k_{\max} - k_{\min})$ inner RMM-GKS iterations, each 
costing $\mathcal{O}(m_j n)$ for the matrix-vector products with 
$\mathbf{H}_j$. The Gauss-Newton parameter update costs $\mathcal{O}(n_p^2 
m_j)$ per block for forming and solving the reduced system. Since $n_p \ll 
m_j$ in our applications (we estimate a small number of geometric parameters), 
this cost is negligible relative to the image solve.

\section{Numerical Results.}
\label{sec:experiments}

We demonstrate the effectiveness of NL-RMM-GKS on computed tomography and 
photoacoustic tomography problems with geometric uncertainty. All CT experiments 
are simulated using the TRIPS-Py package \cite{pashapythonpackage} and the 
ASTRA toolbox \cite{astra}, while PAT problems are simulated using a Python 
translation of the IR-Tools package \cite{gazzola2018ir}. Our experiments 
address the following questions:

\begin{enumerate}
\item \textbf{Memory efficiency:} Does recycling match the convergence of 
standard MM-GKS while maintaining bounded memory?

\item \textbf{Streaming performance:} Can streaming variants (s-NL-RMM-GKS) 
achieve near-full-data quality, and how does performance vary with the number 
of blocks $N$?

\item \textbf{Dynamic reconstruction:} For dynamic phantoms, how do temporal 
regularization strategies (ANISO-TV vs.\ optical flow) perform when combined 
with geometry estimation?

\item \textbf{Recycling window size:} How does $k_{\max}$ affect the 
trade-off between memory, convergence speed, and reconstruction quality?
\end{enumerate}

The AltMin vs.\ VarPro robustness comparison is addressed in the supplementary 
material (Section~S4), where we show that VarPro converges faster with good 
initialization while AltMin is more robust to poor initialization.

\subsection{Experimental Setup.}

\paragraph{Quality metrics.}
We evaluate reconstruction quality using the relative reconstruction error 
(RRE) $\|\mathbf{u}^{(k)} - \mathbf{u}_{\text{true}}\|_2 / 
\|\mathbf{u}_{\text{true}}\|_2$ and the relative parameter error 
$\|\mathbf{p}^{(k)} - \mathbf{p}_{\text{true}}\|_2 / 
\|\mathbf{p}_{\text{true}}\|_2$.

\paragraph{Stopping criteria.}
We terminate when the relative solution change satisfies 
$\|\mathbf{u}^{(k+1)} - \mathbf{u}^{(k)}\|/\|\mathbf{u}^{(k)}\| < 10^{-3}$, 
the parameter increment satisfies $\|\Delta\mathbf{p}^{(k)}\| < 10^{-4}$, 
or the maximum iteration count is reached.

\paragraph{Common parameters.}
Unless otherwise stated, all experiments use recycling window $k_{\min} = 5$, 
$k_{\max} = 25$, truncated SVD compression retaining $k_{\min}-1$ singular 
vectors, and regularization parameter selected by the discrepancy principle. 
All experiments are run on a Dell XPS 16 with single-threaded execution.

\paragraph{Parameter initialization.}
We initialize $\mathbf{p}^{(0)}$ using a coarse grid search in a 
low-dimensional subspace: an initial basis $\mathbf{V}_{\ell_0}$ with 
$\ell_0 = 10$ is generated via Golub-Kahan bidiagonalization at the nominal 
geometry, and for each candidate $\mathbf{p}_{\text{cand}}$ on a coarse grid 
we solve the reduced problem with $\mathbf{H}(\mathbf{p}_{\text{cand}})
\mathbf{V}_{\ell_0}$ and select the candidate yielding the smallest data 
residual. This is computationally cheap since only the reduced subspace problem 
is solved.


\subsection{Computed Tomography with Uncertain Projection Angles.}

In fan-beam CT, the forward operator $\mathbf{H}(\mathbf{p})$ implements the 
Radon transform at perturbed angles $\boldsymbol{\theta}(p) = [\theta_1^{\text{nom}} 
+ p, \ldots, \theta_{n_\theta}^{\text{nom}} + p]^\top$, where the scalar $p$ 
represents an unknown global angular shift. Even small errors of $\pm 1$--$2^\circ$ 
can produce significant reconstruction artifacts \cite{meng2023numerical}, motivating 
joint estimation of $p$ alongside the attenuation image.

\subsubsection{Test 1: Static Shepp-Logan Reconstruction.}
\label{sec:test1}

\paragraph{Setup.}
We use a $256 \times 256$ Shepp-Logan phantom with $n_\theta = 180$ projection 
angles randomly selected from $[0^\circ, 180^\circ)$ and $n_r = 362$ detector bins. 
Geometry uncertainty is introduced by a random angular shift $p_{\text{true}} 
= 0.2421^\circ$, with 1\% Gaussian noise added to the sinogram. We initialize at 
$p^{(0)} = 0^\circ$ via grid search. For streaming experiments, the 180 angles are 
partitioned into $N$ randomly assigned blocks of approximately equal size 
($N = 2, 4, 6, 10$).

\paragraph{Goal.} Our first goal is to compare the computational efficiency, memory usage, and convergence behavior of standard MM-GKS without recycling (memory grows linearly with iterations), NL-RMM-GKS ($N=1$, full data with recycling), and streaming s-NL-RMM-GKS with varying block counts ($N = 2, 4, 6, 10$).
Additionally, in Section~\ref{sec:supp_inner_iters}, we investigate how the number of inner solver iterations affects streaming performance when $N=4$.

\paragraph{Results.}
Figure~\ref{fig:exp_1a_convergence_plot} shows that all methods successfully 
recover the true angle (relative parameter error $< 10^{-3}$), but 
reconstruction quality differs substantially. Standard MM-GKS without recycling 
stagnates at RRE $= 0.1805$, while NL-RMM-GKS ($N=1$) converges to RRE $= 
0.0790$---a 56\% improvement---demonstrating that recycling improves convergence 
quality, not just memory efficiency. Increasing $N$ trades quality for speed 
(Figure~\ref{fig:ct_streaming_blocks}): at $N=10$ we achieve a 5.2$\times$ 
speedup over $N=2$ with peak memory reduced by 8\%, at the cost of RRE 
increasing to 0.1872. Visual results (Figure~\ref{fig:exp_1a_reconstruction_plot}) 
confirm that $N \leq 4$ maintains good edge preservation. 
Table~\ref{tab:ct_static_results} summarizes all results. The effect of inner 
iteration count on streaming performance is studied in the supplementary 
material (Section~S2), where we show that 10 inner iterations provides a 
good speed-accuracy balance.

\begin{figure}[htbp]
\centering
\includegraphics[width=\textwidth]{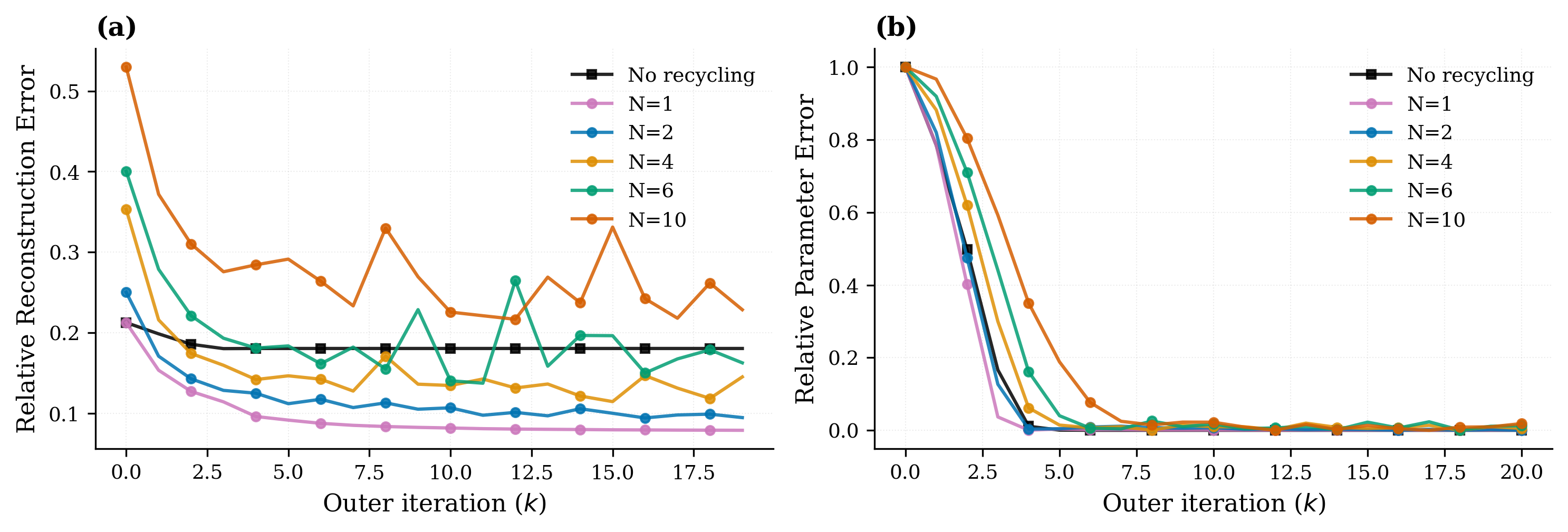}
\caption{Test 1 convergence comparison. (a) RRE vs.\ outer iteration: 
NL-RMM-GKS outperforms standard MM-GKS, with streaming variants trading 
quality for speed. (b) Parameter error converges rapidly for all methods, 
reaching $< 10^{-3}$ by iteration 5.}
\label{fig:exp_1a_convergence_plot}
\end{figure}

\begin{figure}[htbp]
\centering
\includegraphics[width=\textwidth]{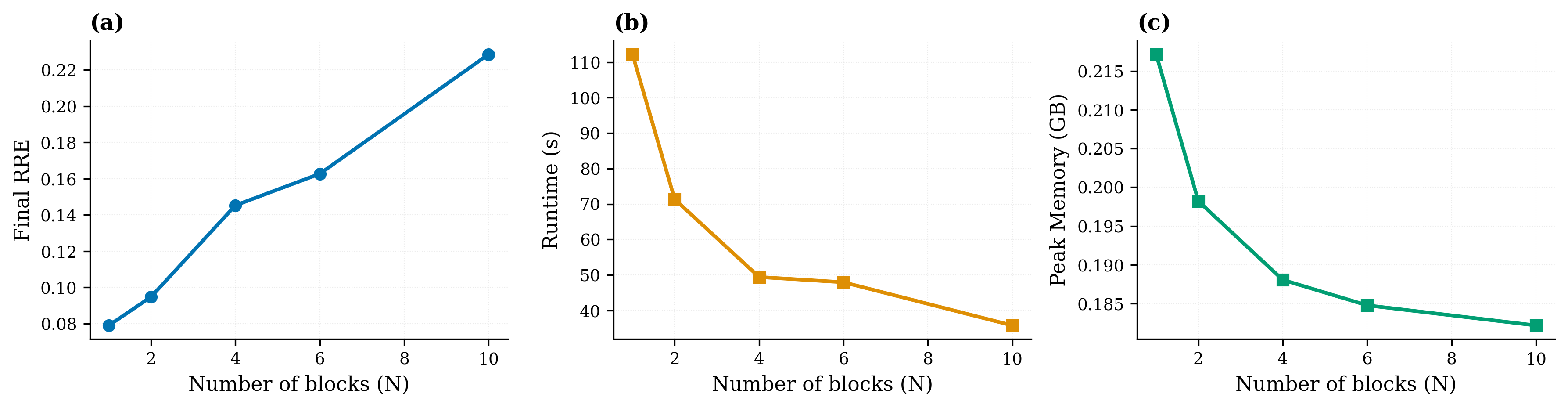}
\caption{Test 1 streaming performance vs.\ number of blocks. 
(a) Final RRE increases with $N$. 
(b) Runtime decreases substantially (5.2$\times$ speedup from $N=2$ to $N=10$). 
(c) Peak memory drops 8\% at $N=10$.}
\label{fig:ct_streaming_blocks}
\end{figure}

\begin{figure}[htbp]
\centering
\includegraphics[width=\textwidth]{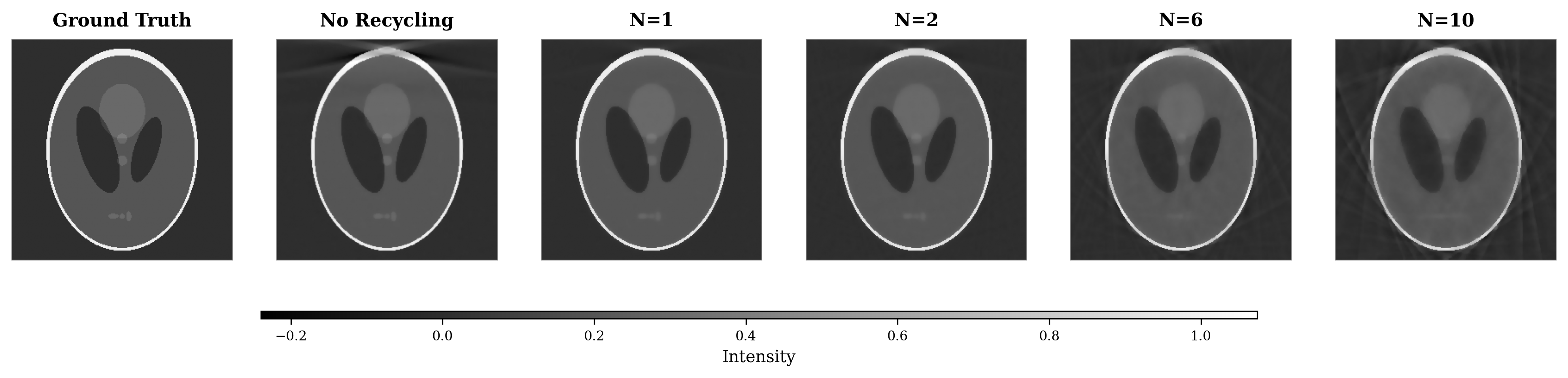}
\caption{Test 1 visual comparison. Shepp-Logan reconstructions for $N = 1, 
2, 6, 10$ blocks compared to standard MM-GKS. Progressive blurring of ellipse 
boundaries is visible as $N$ increases, particularly in small interior 
structures.}
\label{fig:exp_1a_reconstruction_plot}
\end{figure}

\begin{table}[htbp]
\centering
\small
\caption{Test 1: Static Shepp-Logan CT with angle uncertainty 
($p_\text{true} = 0.2421^\circ$, $p^{(0)} = 0^\circ$).}
\label{tab:ct_static_results}
\begin{tabular}{lccccc}
\toprule
Method & Time & Peak Mem. & Final & Param. & $p^*$ \\
       & (min) & (GB)      & RRE   & Err.   & ($^\circ$)   \\
\midrule
MM-GKS (no recycling)    & 2.08 & 0.220 & 0.1805 & 1.49e-4 & 0.2421 \\
NL-RMM-GKS ($N=1$)       & 1.87 & 0.217 & 0.0790 & 7.36e-4 & 0.2419 \\
s-NL-RMM-GKS ($N=2$)     & 1.19 & 0.198 & 0.0948 & 2.89e-4 & 0.2422 \\
s-NL-RMM-GKS ($N=4$)     & 0.82 & 0.188 & 0.1453 & 5.35e-3 & 0.2408 \\
s-NL-RMM-GKS ($N=6$)     & 0.80 & 0.185 & 0.1627 & 1.28e-2 & 0.2390 \\
s-NL-RMM-GKS ($N=10$)    & 0.40 & 0.182 & 0.1872 & 1.87e-2 & 0.2376 \\
\bottomrule
\end{tabular}
\end{table}

\subsection{Photoacoustic Tomography with Sensor Radius Uncertainty.}

In our circular array PAT setup, $n_c$ concentric circles of sensors 
surround the imaging region with nominal radii $r_j^{\text{nom}} = 2j/n_c$. 
A global radial shift $p$ affects all circles identically: $r_j(p) = 
r_j^{\text{nom}} + p$. This uniform perturbation arises physically from 
sound speed variations in the coupling medium, sensor positioning errors, 
or systematic acquisition delays, and motivates joint estimation of $p$ 
alongside the absorption image.

\subsubsection{Test 2: Static Tectonic Phantom---Initialization Sensitivity}

\paragraph{Setup.}
We use a $64 \times 64$ tectonic plate phantom featuring sharp boundaries between regions of different absorption. This phantom is specifically designed to test edge preservation under geometric uncertainty. We introduce a radius perturbation $p_{\text{true}} = 0.432$ units applied uniformly to all circles. 

To assess robustness to initialization quality, we test four scenarios with increasing initial error:
\begin{itemize}
\item \textbf{Very good}: Initial offset $= 0.1$ units from truth
\item \textbf{Good}: Initial offset $= 0.2$ units from truth
\item \textbf{Poor}: Initial offset $= 0.5$ units from truth
\item \textbf{Very poor}: Initial offset $= 0.6$ units from truth
\end{itemize}
We use streaming with $N=3$ blocks and standard recycling parameters ($k_{\min} = 5$, $k_{\max} = 25$).

\paragraph{Goal.}
Investigate the robustness of AltMin vs. VarPro implementations to poor parameter initialization.

\paragraph{Results.}
Figure~\ref{fig:exp_3_error_comparison} reveals a clear pattern: when initialization is good (offsets 0.1, 0.2), both VarPro (solid lines) and AltMin (dashed lines) converge smoothly and achieve similar final reconstruction quality. VarPro actually converges slightly faster in these cases, reaching low RRE by iteration 10.

However, the story changes for poor initialization. At offset $= 0.5$ (green curves), we start to see VarPro exhibit more oscillatory behavior in panel (a), while AltMin maintains steadier convergence. At the worst initialization (offset $= 0.6$), VarPro struggles significantly---its reconstruction error oscillates wildly between 2 and 7 before slowly improving, and it never fully stabilizes even by iteration 20. In contrast, AltMin with offset $= 0.6$ shows some initial oscillations but settles into monotonic convergence after a few iterations.

This difference in robustness makes sense from an algorithmic perspective: VarPro eliminates the image variable $\mathbf{u}$ analytically, creating a reduced but highly nonlinear problem in $p$ alone. When far from the solution, this nonlinearity can cause the optimization landscape to be rough with many local irregularities. AltMin, by alternating between $\mathbf{u}$ and $p$ updates, effectively smooths out this landscape at the cost of requiring more outer iterations.

Panel (b) shows that parameter estimation follows similar trends, with VarPro exhibiting larger oscillations for poor initialization. The visual reconstructions in Figure~\ref{fig:exp_3_reconstruction_comparison} confirm that both methods eventually produce comparable image quality, but AltMin gets there more reliably when starting far from the truth.

We recommend using VarPro when you have a decent initial guess (e.g., from calibration or grid search), but preferring AltMin when initialization quality is uncertain or the problem is particularly challenging.

\begin{figure}[htbp]
\centering
\includegraphics[width=\textwidth]{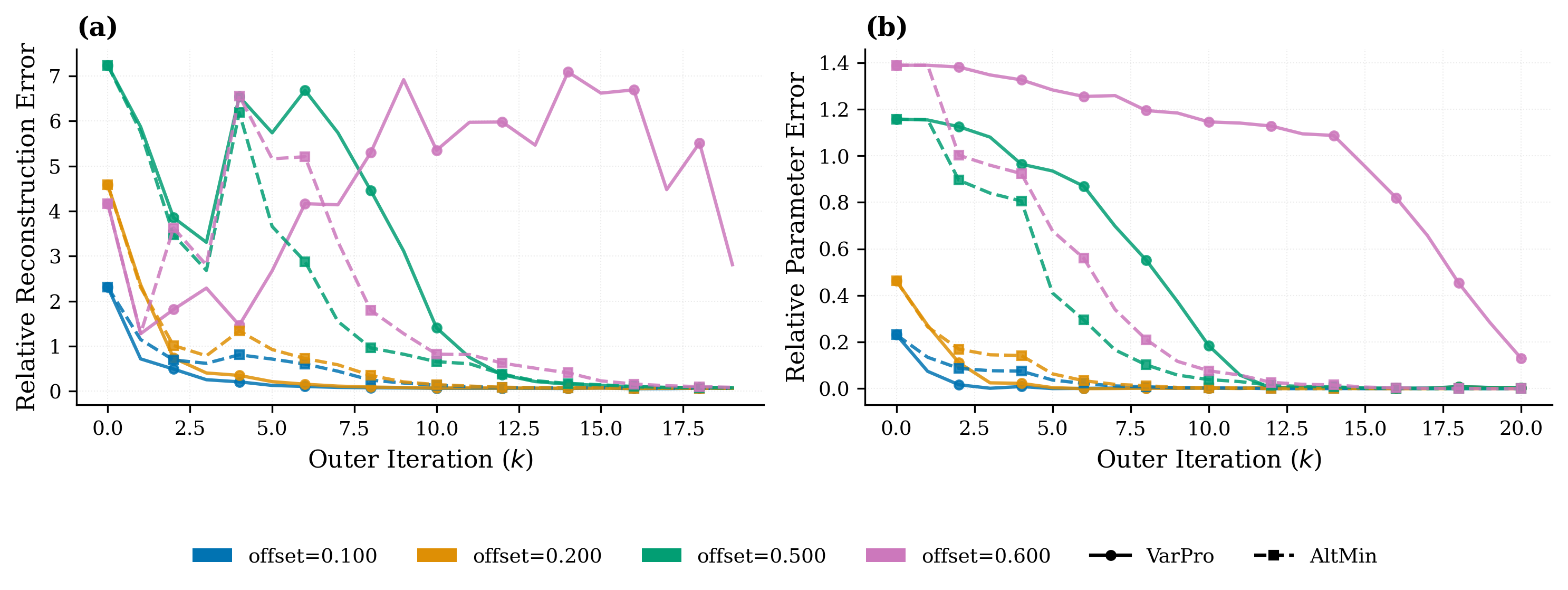}
\caption{Test 2. initialization sensitivity comparison. 
(a) RRE vs. iteration for both VarPro (solid) and AltMin (dashed) across four initialization qualities. Poor initialization (offsets 0.5, 0.6) causes VarPro to oscillate significantly, while AltMin maintains more stable convergence.
(b) Parameter error shows similar trends, with VarPro more sensitive to initialization quality.}
\label{fig:exp_3_error_comparison}
\end{figure}

\begin{figure}[htbp]
\centering
\includegraphics[width=\textwidth]{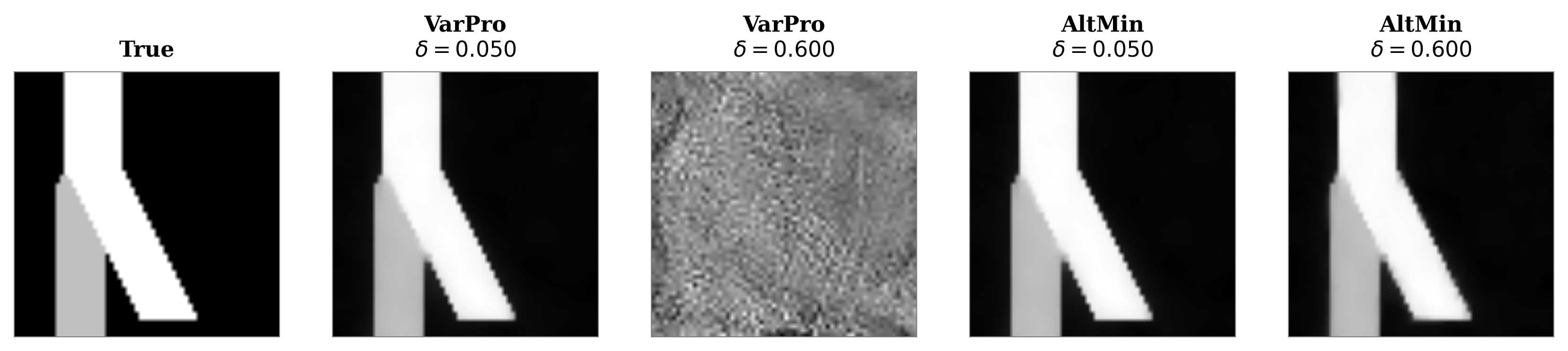}
\caption{Test 2. visual comparison of final reconstructions for different initialization offsets. 
Columns show ground truth, VarPro, and AltMin results across $\delta \in \{0.05, 0.30\}$. Despite convergence differences, both methods achieve similar final visual quality, with AltMin slightly better at preserving sharp boundaries for poor initialization.}
\label{fig:exp_3_reconstruction_comparison}
\end{figure}

\section{Extension of NL-RMM-GKS to Dynamic Problems.}
\label{sec:dynamic}

We now turn to the dynamic setting, where the imaging target changes across
$n_t$ time frames and the forward operator may depend on unknown geometric
parameters $\mathbf{p}$.  The goal is to jointly reconstruct the full
image sequence $\mathbf{u} = \mathrm{vec}([\mathbf{u}^{(1)},\ldots,
\mathbf{u}^{(n_t)}]) \in \mathbb{R}^n$, with $n = n_s n_t$, and to
estimate $\mathbf{p}$, from noisy measurements $\mathbf{b}$.
This section is self-contained: we introduce the dynamic forward model, 
present the full mathematical descriptions of both temporal regularization 
strategies (ANISO-TV and optical flow), give dedicated algorithm descriptions, 
and report numerical experiments on two dynamic phantoms.

\subsection{Dynamic Forward Model and Problem Formulation}
\label{sec:dyn_setup}

In a dynamic inverse problem the forward operator is block-diagonal,
\begin{equation}
  \mathbf{H}(\mathbf{p})
  = \mathrm{blockdiag}\!\left(
      \mathbf{H}_1(\mathbf{p}),\ldots,\mathbf{H}_{n_t}(\mathbf{p})
    \right) \in \mathbb{R}^{m \times n},
\end{equation}
where $\mathbf{H}_t(\mathbf{p}) \in \mathbb{R}^{m_t \times n_s}$ is
the forward operator at time $t$ and $m = \sum_t m_t$.  When the
geometry is the same at every frame, $\mathbf{H}_t(\mathbf{p}) =
\mathbf{H}(\mathbf{p})$ for all $t$ and the block-diagonal structure
reduces to $\mathbf{I}_{n_t} \otimes \mathbf{H}(\mathbf{p})$.  The
joint reconstruction problem takes the form
\begin{equation}
  \label{eq:dyn_joint}
  \min_{\mathbf{u},\mathbf{p}}
  \frac{1}{2}\|\mathbf{H}(\mathbf{p})\mathbf{u} - \mathbf{b}\|_2^2
  + \lambda\|\boldsymbol{\Theta}(\mathbf{u})\,\mathbf{u}\|_1,
\end{equation}
where $\boldsymbol{\Theta}(\mathbf{u})$ is a combined spatio-temporal
regularization operator whose form depends on the chosen temporal
strategy, as described in Sections~\ref{sec:dyn_aniso}
and~\ref{sec:dyn_of} below.

\subsection{Temporal Regularization Strategies}
\label{sec:dyn_reg}

\subsubsection{Anisotropic Space-Time Total Variation (ANISO-TV).}
\label{sec:dyn_aniso}

Following~\cite{pasha2023recycling}, we promote sparse spatial gradients
within each frame and penalize frame-to-frame intensity differences
simultaneously.  Let $\mathbf{L}_s = \begin{bmatrix}
  \mathbf{I}_{n_h} \otimes \mathbf{L}_v \\
  \mathbf{L}_h \otimes \mathbf{I}_{n_v}
\end{bmatrix}$ be the spatial finite-difference operator, where
$\mathbf{L}_v$ and $\mathbf{L}_h$ discretize the first derivative in
the vertical and horizontal directions respectively.  The
ANISO-TV regularization operator is
\begin{equation}
  \label{eq:D1}
  \mathbf{D}_1
  = \begin{bmatrix}
      \mathbf{I}_{n_t} \otimes \mathbf{L}_s \\
      \mathbf{L}_t \otimes \mathbf{I}_{n_s}
    \end{bmatrix},
\end{equation}
where $\mathbf{L}_t$ is the temporal finite-difference operator.  The
regularization term $\|\mathbf{D}_1\mathbf{u}\|_1$ penalizes both
spatial gradients within each frame and differences between consecutive
frames.  Setting $\boldsymbol{\Theta} = \mathbf{D}_1$ (constant,
independent of $\mathbf{u}$) in~\eqref{eq:dyn_joint} recovers the
AnisoTV formulation of~\cite{pasha2023recycling}.  ANISO-TV is well
suited to slowly varying sequences but provides no motion model and
degrades when inter-frame displacements are large.

\subsubsection{Optical Flow Regularization (OF).}
\label{sec:dyn_of}

When the dynamic sequence exhibits coherent motion, we incorporate a
physics-based temporal model following~\cite{okunola2025mmgks-of}.
The optical flow constraint (OFC) assumes that pixel intensities are
preserved as objects move: for a pixel at location $(x_i, y_i)$ with
velocity $\mathbf{s}^i(t) = (s^i_x(t), s^i_y(t))$,
\begin{equation}
  u^i_x(t)\,s^i_x(t) + u^i_y(t)\,s^i_y(t) + u^i_t(t) = 0,
\end{equation}
where $u^i_x, u^i_y, u^i_t$ denote partial derivatives of the image
with respect to $x$, $y$, and $t$.  Assembling this across all pixels
and time pairs gives the system $\boldsymbol{\Upsilon}(\mathbf{u})\,
\mathbf{s} + \mathbf{u}_t = \mathbf{0}$, where
$\boldsymbol{\Upsilon}(\mathbf{u}) = \mathrm{diag}(
\boldsymbol{\Upsilon}(\mathbf{u}^{(1)}), \ldots,
\boldsymbol{\Upsilon}(\mathbf{u}^{(n_t-1)}))$ is a block-diagonal
matrix of spatial image gradients, and $\mathbf{u}_t$ encodes temporal
differences.

\paragraph{Velocity estimation.}
Given the current image estimate $\mathbf{u}^{(k)}$, the forward
velocity field $\mathbf{s}^{(k)}$ is estimated by solving the
regularized least-squares problem
\begin{equation}
  \label{eq:of_forward}
  \mathbf{s}^{(k)}
  = \argmin_{\mathbf{s}}
  \left\|\boldsymbol{\Upsilon}(\mathbf{u}^{(k)})\mathbf{s}
        + \mathbf{u}_t^{(k)}\right\|_p^p
  + \gamma\|\hat{\mathbf{L}}\mathbf{s}\|_q^q,
\end{equation}
where $\hat{\mathbf{L}}$ is a discrete gradient operator that
regularizes the velocity field, and $p, q \in \{1, 2\}$.  The reverse
velocity field $\mathbf{s}'^{(k)}$ is approximated via the relation
$(s'_x + s_x(t), s'_y + s_y(t)) \approx -(s_x(t), s_y(t))$ to
halve computation~\cite{okunola2025mmgks-of}.  Each per-frame subproblem
is solved efficiently with MMGKS using automatic regularization
parameter selection via GCV.

\paragraph{Motion operator encoding.}
Given $\mathbf{s}^{(k)}$ and $\mathbf{s}'^{(k)}$, we construct the
block-structured motion matrices
\begin{equation}
  \bar{\mathbf{M}}(\mathbf{s}^{(k)})
  = \begin{bmatrix}
      \mathbf{I} & -\mathbf{M}(\mathbf{s}^{(k)}(1)) & & \\
      & \ddots & \ddots & \\
      & & \mathbf{I} & -\mathbf{M}(\mathbf{s}^{(k)}(n_t-1))
    \end{bmatrix},
\end{equation}
and $\bar{\mathbf{M}}'(\mathbf{s}'^{(k)})$ analogously for the reverse
flow, where each $\mathbf{M}(\mathbf{s}^{(k)}(t)) \in \mathbb{R}^{n_s
\times n_s}$ is a bilinear warping operator encoding per-frame
displacements~\cite{okunola2025mmgks-of}.  The combined motion
regularization operator is
\begin{equation}
  \hat{\mathbf{M}}(\mathbf{s}^{(k)}, \mathbf{s}'^{(k)})
  = \begin{bmatrix}
      \bar{\mathbf{M}}(\mathbf{s}^{(k)}) \\
      \bar{\mathbf{M}}'(\mathbf{s}'^{(k)})^\top
    \end{bmatrix}.
\end{equation}

\paragraph{Image reconstruction step.}
With fixed velocity estimates, we set the combined regularization
operator
\begin{equation}
  \label{eq:Theta_OF}
  \boldsymbol{\Theta}^{(k)}
  = \begin{bmatrix}
      \boldsymbol{\Psi} \\
      \hat{\mathbf{M}}(\mathbf{s}^{(k)}, \mathbf{s}'^{(k)})
    \end{bmatrix},
\end{equation}
where $\boldsymbol{\Psi}$ is the spatial regularization operator.
Problem~\eqref{eq:dyn_joint} then becomes
\begin{equation}
  \mathbf{u}^{(k+1)}
  = \argmin_{\mathbf{u}}
  \frac{1}{2}\|\mathbf{H}(\mathbf{p}^{(k)})\mathbf{u} - \mathbf{b}\|_2^2
  + \lambda\|\boldsymbol{\Theta}^{(k)}\mathbf{u}\|_1,
\end{equation}
which is solved via RMM-GKS with the discrepancy principle.
Unlike ANISO-TV, $\boldsymbol{\Theta}^{(k)}$ is updated at each outer
iteration as the image estimate improves, so the regularizer adapts to
the evolving motion field.

\subsection{Dynamic NL-RMM-GKS Algorithm}
\label{sec:dyn_alg}

Both ANISO-TV and OF plug directly into the NL-RMM-GKS framework
(Algorithm~\ref{alg:nl_rmmgks_two_solves}) by supplying the appropriate
operator $\boldsymbol{\Theta}^{(k)}$ at each outer iteration.  The key
difference from the static case is the additional velocity estimation
step required by OF before each image solve.  Algorithm~\ref{alg:dyn_nl_rmmgks}
states the unified dynamic procedure; setting $\boldsymbol{\Theta}^{(k)}
= \mathbf{D}_1$ (constant) recovers the ANISO-TV variant, while
executing the \textsc{Solve-OF} subroutine and forming
$\boldsymbol{\Theta}^{(k)}$ via~\eqref{eq:Theta_OF} gives the OF
variant.

\begin{algorithm}
\caption{Dynamic NL-RMM-GKS (with plug-in temporal regularization)}
\label{alg:dyn_nl_rmmgks}
\begin{algorithmic}[1]
\REQUIRE $\mathbf{b}, \boldsymbol{\Psi}, \mathbf{p}^{(0)},
         k_{\min}, k_{\max}, \epsilon,
         \lambda_{\mathrm{hi}}, \tau$
         \hfill // $\tau$: OF update frequency (set $\tau=1$ for every iteration)
\ENSURE $(\mathbf{u}^*, \mathbf{p}^*)$
\STATE Initialize $\mathbf{u}^{(0)}$, $\mathbf{V}^{(0)}_{k_{\min}}$,
       $\boldsymbol{\Theta}^{(0)} \leftarrow \mathbf{D}_1$
       \hfill // same initialization for both strategies
\FOR{$k = 0, 1, 2, \ldots$}
  \STATE $\mathbf{H}_k \leftarrow \mathbf{H}(\mathbf{p}^{(k)})$
  \IF{OF variant \AND $k \bmod \tau = 0$}
    \STATE \textbf{Velocity estimation:} for $t = 1,\ldots,n_t-1$:
    \[
      \mathbf{s}^{(k)}(t)
      = \mathrm{SOLVE\text{-}OF}\!\left(
          \mathbf{u}^{(k)}(t),\, \mathbf{u}^{(k)}(t{+}1),\,
          \hat{\mathbf{L}}\right)
    \]
    \STATE Approximate $\mathbf{s}'^{(k)}$ from $\mathbf{s}^{(k)}$
           via OFC relation (Remark~3.1 of~\cite{okunola2025mmgks-of})
    \STATE $\boldsymbol{\Theta}^{(k)} \leftarrow
           \begin{bmatrix}
             \boldsymbol{\Psi} \\
             \hat{\mathbf{M}}(\mathbf{s}^{(k)}, \mathbf{s}'^{(k)})
           \end{bmatrix}$
  \ELSEIF{ANISO-TV variant}
    \STATE $\boldsymbol{\Theta}^{(k)} \leftarrow \mathbf{D}_1$
           \hfill // fixed; no motion estimation needed
  \ENDIF
  \STATE \textbf{Primary image solve:}
  \[
    (\mathbf{u}^{(k+1)}, \lambda^{(k)}, \mathbf{V}^{(k+1)}_{k_{\min}})
    = \mathrm{RMM\text{-}GKS}(
        \mathbf{H}_k, \boldsymbol{\Theta}^{(k)}, \mathbf{b},
        \mathbf{u}^{(k)}, \mathbf{V}^{(k)}_{k_{\min}},
        k_{\min}, k_{\max}, \epsilon)
  \]
  \STATE \textbf{Optional high-regularization solve:}
  \[
    (\hat{\mathbf{u}}^{(k+1)}, \hat{\lambda}^{(k)}, \hat{\mathbf{V}}_{k_{\min}})
    = \mathrm{RMM\text{-}GKS}(
        \mathbf{H}_k, \boldsymbol{\Theta}^{(k)}, \mathbf{b},
        \mathbf{u}^{(k+1)}, \mathbf{V}^{(k+1)}_{k_{\min}},
        k_{\min}, k_{\max}, \epsilon;\,
        \lambda_{\mathrm{hi}})
  \]
  \hfill // $\hat{\mathbf{V}}_{k_{\min}}$ discarded after this step
  \STATE \textbf{Parameter update:}
  \[
    \mathbf{p}^{(k+1)}
    = \mathrm{UPDATE\text{-}PARAM}(
        \mathbf{H}_k, \boldsymbol{\Theta}^{(k)},
        \hat{\mathbf{u}}^{(k+1)}, \mathbf{b},
        \mathbf{p}^{(k)}, \hat{\lambda}^{(k)})
  \]
  \STATE \textbf{Update weights} $\mathbf{P}^{(k+1)}_\epsilon$
         from $\mathbf{u}^{(k+1)}$
  \STATE \textbf{Compute and project residual:}
  \STATE $\mathbf{r}^{(k+1)} \leftarrow
         \mathbf{H}_k^\top(\mathbf{H}_k \mathbf{u}^{(k+1)} - \mathbf{b})
         + \lambda^{(k)}
           (\boldsymbol{\Theta}^{(k)})^\top
           (\mathbf{P}^{(k+1)}_\epsilon)^2
           \boldsymbol{\Theta}^{(k)} \mathbf{u}^{(k+1)}$
  \STATE $\mathbf{r}^{(k+1)} \leftarrow \mathbf{r}^{(k+1)}
         - \mathbf{V}^{(k+1)}_{k_{\min}}
           (\mathbf{V}^{(k+1)}_{k_{\min}})^\top \mathbf{r}^{(k+1)}$
  \STATE $\mathbf{V}^{(k+1)}_{k_{\min}} \leftarrow
         [\mathbf{V}^{(k+1)}_{k_{\min}},\;
          \mathbf{r}^{(k+1)} / \|\mathbf{r}^{(k+1)}\|_2]$
  \STATE Check outer convergence
\ENDFOR
\STATE $\mathbf{u}^* = \mathbf{u}^{(k+1)},\quad
       \mathbf{p}^* = \mathbf{p}^{(k+1)}$
\end{algorithmic}
\end{algorithm}

\paragraph{Streaming extension.}
The streaming variant of the dynamic algorithm (s-NL-RMM-GKS, dynamic)
follows Algorithm~\ref{alg:s_nl_rmmgks_final} verbatim, with
$\boldsymbol{\Theta}^{(k)}$ replaced by the dynamic operator at each
block.  Velocity estimation for OF is performed once per outer block
using the current image estimate before the primary image solve; the
estimated velocity field is then held fixed within that block.  When
$\mathbf{p}$ is known and fixed, and ANISO-TV is used, this reduces
exactly to s-RMM-GKS applied to a dynamic linear problem~\cite{pasha2023recycling}.

\paragraph{Relationship to prior work.}
When $\mathbf{p}$ is known, Algorithm~\ref{alg:dyn_nl_rmmgks} with
ANISO-TV reduces to the dynamic MM-GKS framework of~\cite{pasha2023recycling},
and with OF reduces to MMGKS-OF~\cite{okunola2025mmgks-of}.
NL-RMM-GKS therefore strictly generalizes both methods to the case of
uncertain forward operators.

\subsection{Experimental Setup}
\label{sec:dyn_exp_setup}

All quality metrics, stopping criteria, and common algorithmic parameters
are as described in Section~\ref{sec:experiments}.  

\subsection{Test 3: Dynamic Moving MNIST Sequence (CT)}
\label{sec:dyn_test_mnist}

\paragraph{Setup.}
We use the Moving MNIST dataset~\cite{moving_mnist}, consisting of
$n_t = 20$ frames of size $64 \times 64$ pixels showing two smoothly
translating digits.  For each frame we acquire $n_\theta = 10$ randomly
assigned fan-beam CT projections from $[0^\circ, 180^\circ)$ with
$n_r = 91$ detector bins, representing a limited-angle sparse
acquisition.  A common angular perturbation $p_{\mathrm{true}} =
-0.1480^\circ$ is applied to all frames, with initialization
$p^{(0)} = -0.25^\circ$.  For streaming experiments, the 10 angles per
frame are partitioned into $N$ randomly assigned blocks of approximately
equal size.

\paragraph{Goal.}
We compare the two temporal regularization strategies---optical flow
(OF)~\cite{okunola2025mmgks-of} and anisotropic total variation
(ANISO-TV)~\cite{pasha2023recycling}---within the NL-RMM-GKS framework,
and investigate how streaming block count $N$ affects each strategy.

\paragraph{Results.}
Figure~\ref{fig:exp_2_error_comparison} reveals a striking difference
between strategies.  Optical flow consistently achieves lower
reconstruction error across all $N$: with $N=1$, OF converges to
$\mathrm{RRE} \approx 0.07$ while ANISO-TV reaches only $\approx 0.30$.
This gap widens with $N$; at $N=10$, OF maintains $\mathrm{RRE} =
0.41$ while ANISO-TV stalls at $\mathrm{RRE} = 0.67$.  The superiority
of optical flow is expected here: the Moving MNIST digits undergo smooth
translation, exactly the motion structure that OF is designed to capture,
whereas ANISO-TV enforces temporal smoothness without any motion model
and is better suited to sequences with small inter-frame changes.
Parameter estimation (Figure~\ref{fig:exp_2_error_comparison}(b)) is
more robust than reconstruction to the choice of regularization, though
ANISO-TV becomes unstable at large $N$.  Visual results
(Figure~\ref{fig:exp_2_reconstruction_comparison}) confirm that OF
preserves digit shapes clearly even at $N=10$, while ANISO-TV produces
increasingly unrecognizable digits.  Full results are summarized in
Table~\ref{tab:moving_mnist_results}.

\begin{table}[htbp]
\centering
\caption{Test 3. Dynamic CT for Moving MNIST ($n_t=20$ frames of $64\times64$, 
10 angles per frame, $p_\text{true} = -0.1480^\circ$, $p^{(0)} = -0.25^\circ$). Optical 
flow consistently outperforms ANISO-TV, especially for larger $N$.}
\label{tab:moving_mnist_results}
\begin{tabular}{lccc}
\toprule
Method & Final RRE & Param.\ Err. & $p^*$ ($^\circ$) \\
\midrule
\multicolumn{4}{l}{\textit{Optical Flow Regularization}} \\
NL-RMM-GKS-OF ($N=1$)       & 0.0719 & 0.0016 & -0.1477 \\
s-NL-RMM-GKS-OF ($N=2$)     & 0.0979 & 0.0051 & -0.1487 \\
s-NL-RMM-GKS-OF ($N=5$)     & 0.2089 & 0.0538 & -0.1400 \\
s-NL-RMM-GKS-OF ($N=10$)    & 0.4071 & 0.0121 & -0.1460 \\
\midrule
\multicolumn{4}{l}{\textit{Anisotropic TV Regularization}} \\
NL-RMM-GKS-ANISO ($N=1$)    & 0.3040 & 0.1074 & -0.1321 \\
s-NL-RMM-GKS-ANISO ($N=2$)  & 0.3605 & 0.0838 & -0.1355 \\
s-NL-RMM-GKS-ANISO ($N=5$)  & 0.5030 & 0.1795 & -0.1214 \\
s-NL-RMM-GKS-ANISO ($N=10$) & 0.6712 & 0.4634 & -0.2165 \\
\bottomrule
\end{tabular}
\end{table}

\begin{figure}[htbp]
\centering
\includegraphics[width=\textwidth]{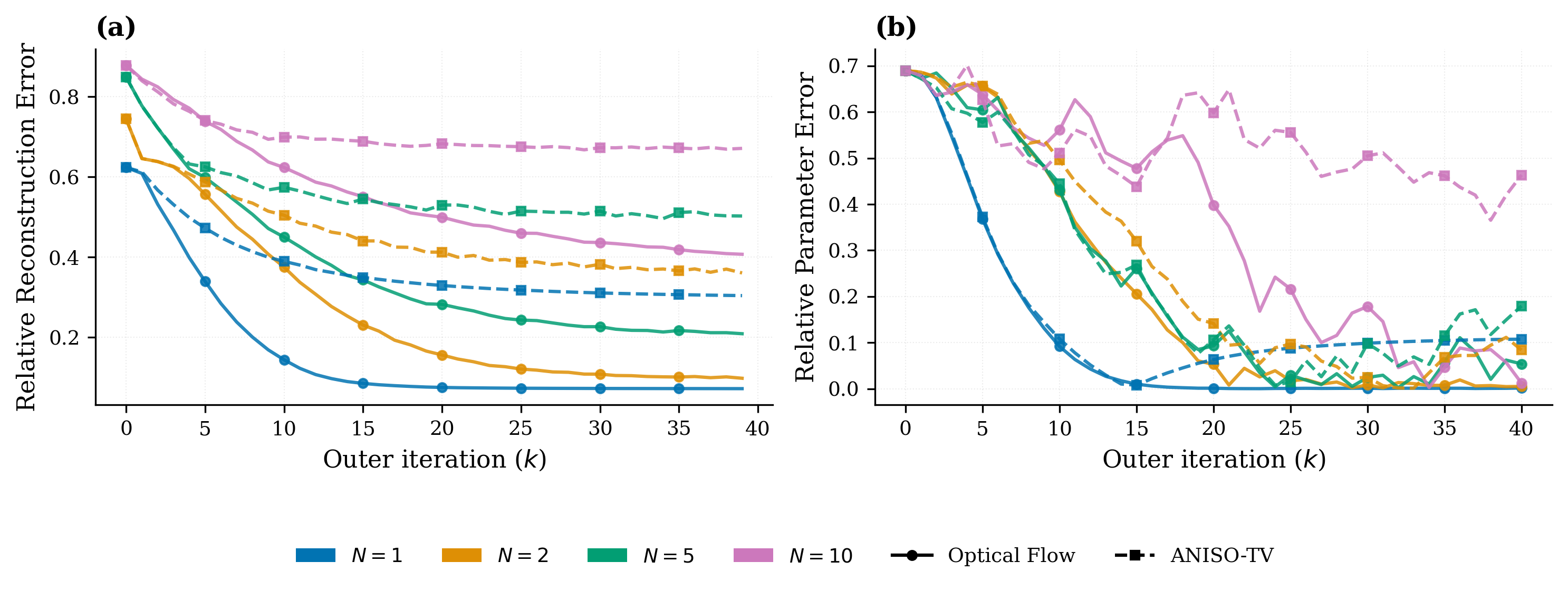}
\caption{Test 3. Convergence comparison across block counts and regularization 
strategies. (a) RRE vs.\ iteration: optical flow (solid) maintains stable 
convergence across all $N$, while ANISO-TV (dashed) degrades significantly 
at $N=10$. (b) Parameter error: both methods converge initially, but ANISO-TV 
becomes unstable at large $N$.}
\label{fig:exp_2_error_comparison}
\end{figure}

\begin{figure}[htbp]
\centering
\includegraphics[width=\textwidth]{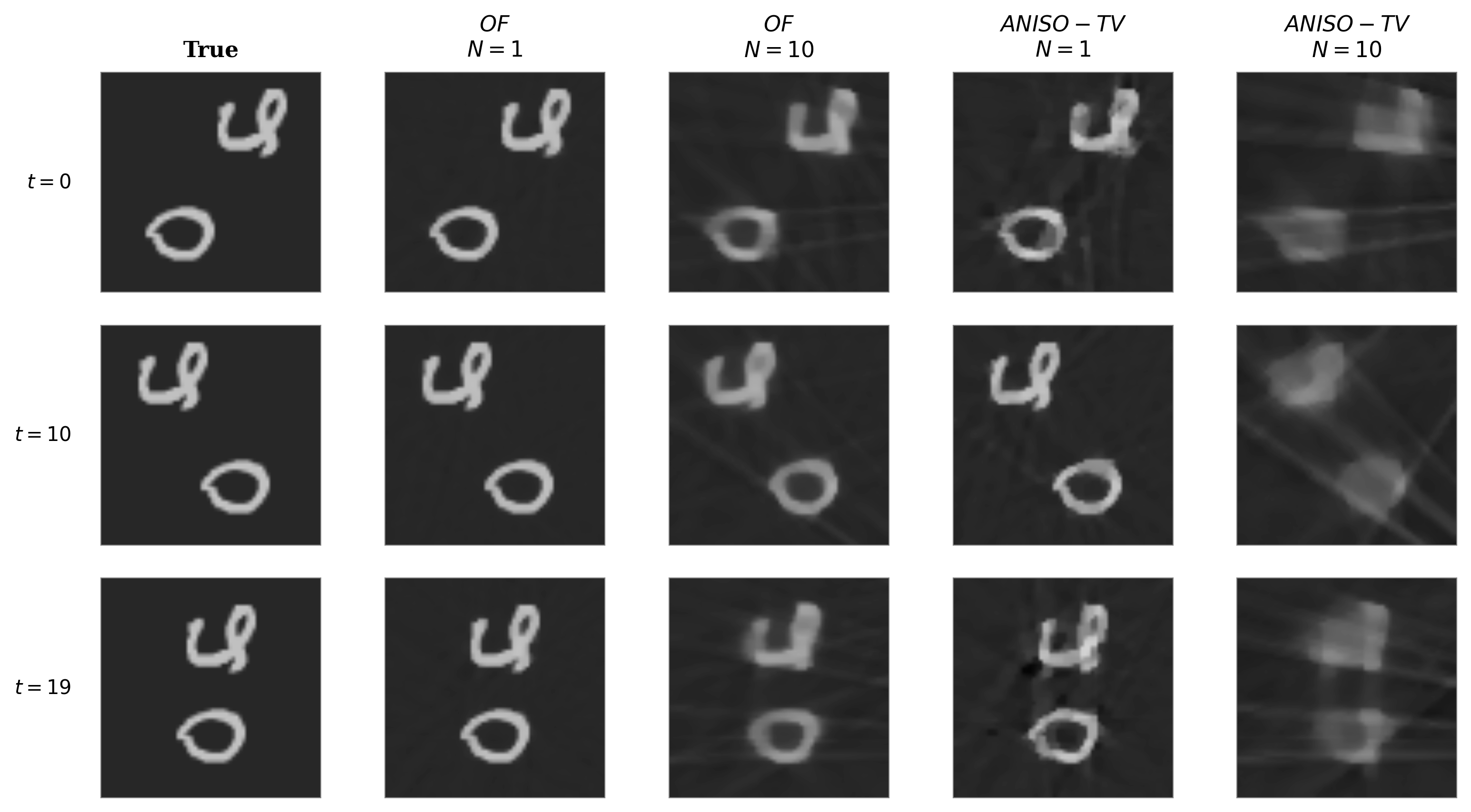}
\caption{Test 3. Visual comparison at three time points ($t=0, 10, 19$) for 
$N \in \{1, 10\}$. Optical flow preserves digit shapes significantly better 
than ANISO-TV, especially at $N=10$ where ANISO-TV fails completely.}
\label{fig:exp_2_reconstruction_comparison}
\end{figure}
\subsection{Test 4: Dynamic Blocks Phantom (PAT)}
\label{sec:dyn_test_pat}

\paragraph{Setup.}
We consider $n_t = 10$ frames of size $50 \times 50$ pixels containing
four rectangular blocks of smoothly varying intensity, imaged with a
circular array PAT sensor.  A global radial shift $p_{\mathrm{true}} =
0.1924$ units is introduced with initialization $p^{(0)} = 0.25$.  We
evaluate performance under recycling window sizes $k_{\max} \in
\{10, 15, 20, 25\}$ with $k_{\min} = 5$ fixed, comparing OF and
ANISO-TV regularization.  A baseline that uses only the coarse
grid-search parameter estimate and then solves the linear problem is
included to demonstrate the benefit of joint estimation.

\paragraph{Goal.}
To determine the optimal recycling window size for dynamic PAT problems,
confirm the superiority of OF regularization in this setting, and
demonstrate the benefit of full joint estimation over the grid-search
baseline.

\paragraph{Results.}
Figure~\ref{fig:exp_4_error_comparison}(a) shows reconstruction error
decreasing steadily for all recycling window sizes, with curves tightly
clustered, indicating that reconstruction quality is relatively
insensitive to the window size within the tested range.  Optical flow
consistently outperforms ANISO-TV by roughly $2\times$ in final RRE
($0.11$--$0.13$ vs.\ $0.26$--$0.28$).  Panel~(b) shows that parameter
estimation is even less sensitive to $k_{\max}$: all configurations
converge to parameter errors below $0.01$ by iteration~20.  As shown
in Table~\ref{tab:test4_results}, increasing $k_{\max}$ from $10$ to
$25$ reduces RRE from $0.1322$ to $0.1113$ for OF---a modest $16\%$
improvement that must be weighed against the proportional memory
increase, suggesting $k_{\max} \in [15, 20]$ as a practical sweet spot.
Visual results (Figure~\ref{fig:exp_4_reconstruction_comparison}) confirm
that OF preserves block edges well across all $k_{\max}$ values, while
ANISO-TV produces noticeably blurred boundaries.  Critically, the
grid-search-only baseline produces substantially worse reconstructions
than joint estimation under either regularization strategy, demonstrating
that iterative parameter refinement is essential.

\begin{figure}[htbp]
\centering
\includegraphics[width=\textwidth]{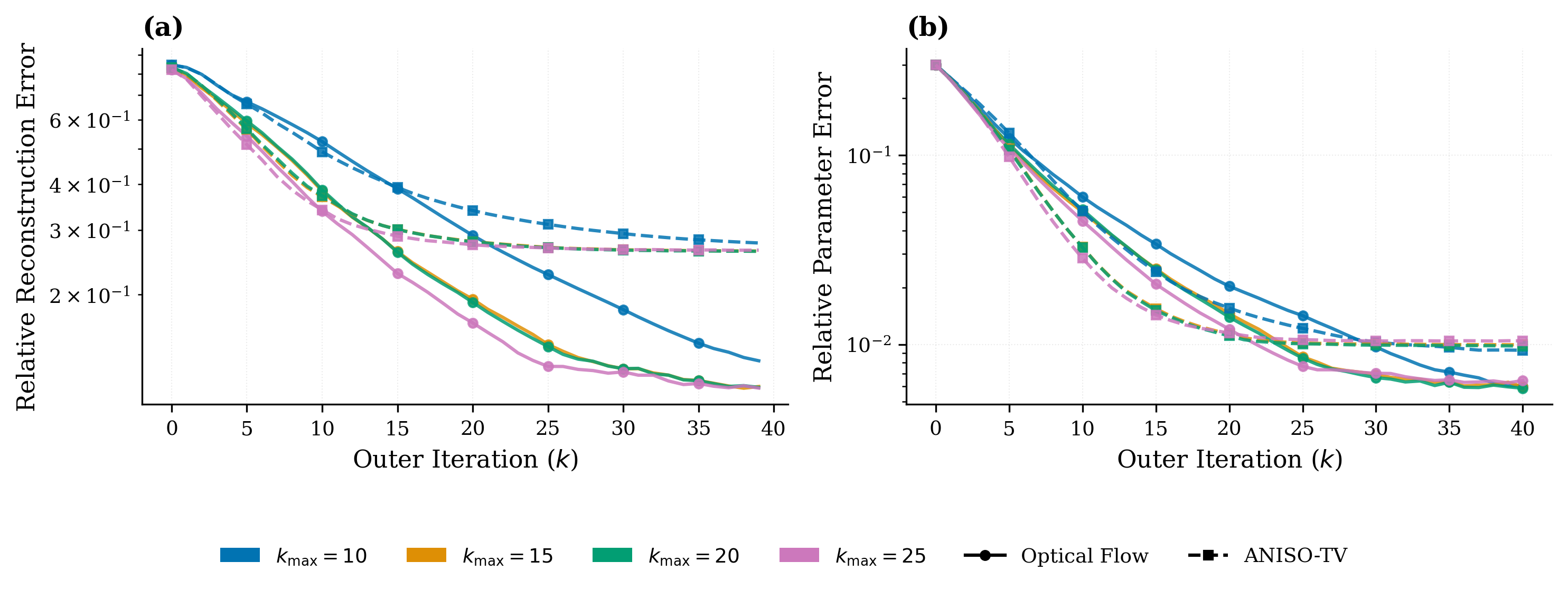}
\caption{Test 4. Effect of recycling window size on dynamic PAT reconstruction. 
(a) RRE vs.\ iteration for optical flow (solid) and ANISO-TV (dashed) across 
$k_{\max} \in \{10, 15, 20, 25\}$. Curves are tightly clustered, showing 
reconstruction is relatively insensitive to window size. (b) Parameter error 
converges rapidly for all settings.}
\label{fig:exp_4_error_comparison}
\end{figure}

\begin{figure}[htbp]
\centering
\includegraphics[width=\textwidth]{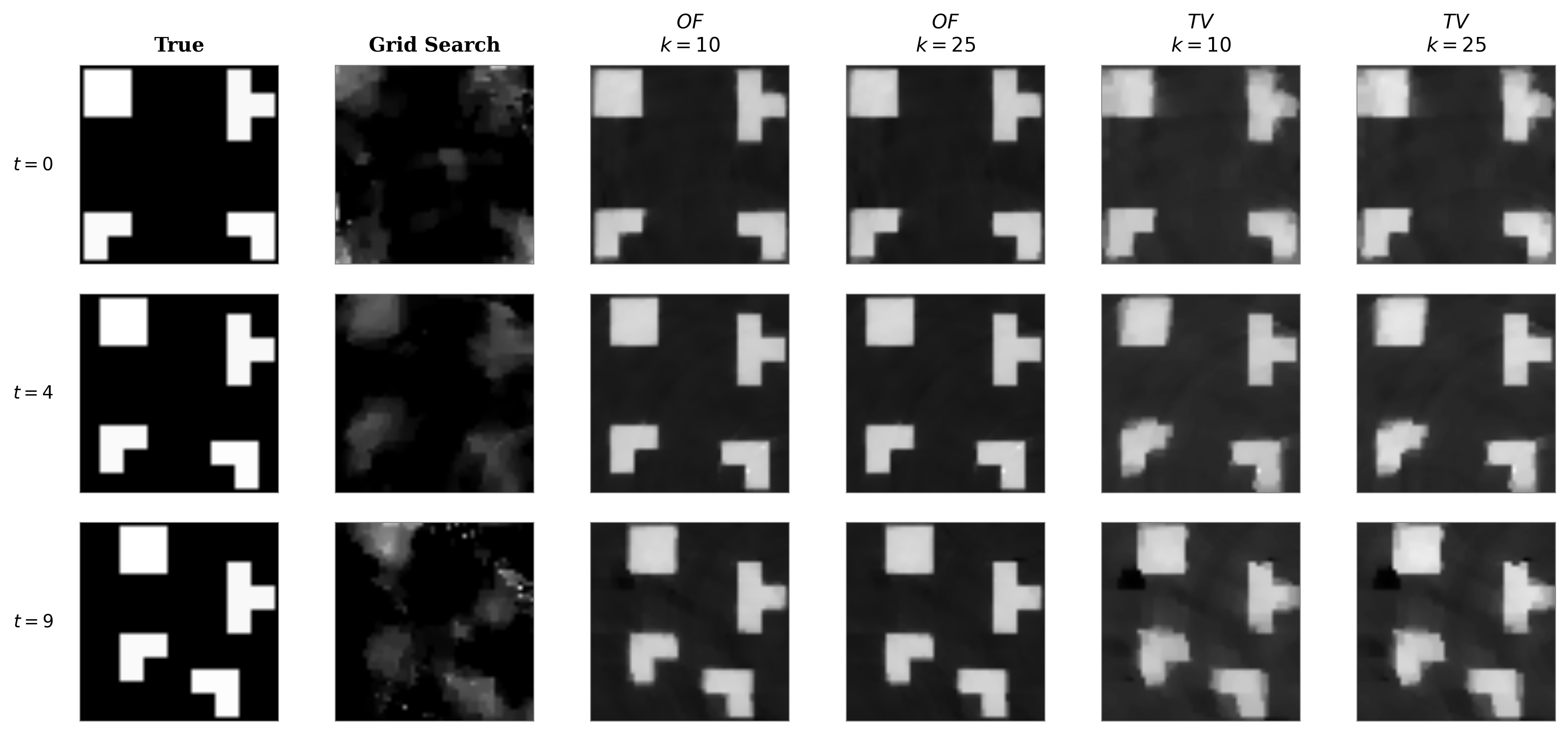}
\caption{Test 4. Visual comparison at three time points ($t=0, 4, 9$) for 
$k_{\max} \in \{10, 25\}$. Columns compare ground truth, grid-search-only 
baseline, optical flow (OF), and ANISO-TV (TV). Optical flow preserves sharp 
block edges significantly better than ANISO-TV; differences between $k_{\max} 
= 10$ and $k_{\max} = 25$ are subtle.}
\label{fig:exp_4_reconstruction_comparison}
\end{figure}

\begin{table}[htbp]
\centering
\caption{Test 4: Dynamic PAT for blocks phantom (10 frames of $50\times50$, 
$p_\text{true} = 0.1924$, $p^{(0)} = 0.25$). Optical flow consistently 
outperforms ANISO-TV. Modest recycling windows ($k_{\max} = 15$) provide a 
good balance between memory and accuracy.}
\label{tab:test4_results}
\begin{tabular}{lccc}
\toprule
Method & Final RRE & Param.\ Err. & $p^*$ \\
\midrule
\multicolumn{4}{l}{\textit{Optical Flow Regularization}} \\
NL-RMM-GKS-OF ($k_{\max}=10$) & 0.1322 & 0.0060 & 0.1935 \\
NL-RMM-GKS-OF ($k_{\max}=15$) & 0.1125 & 0.0060 & 0.1935 \\
NL-RMM-GKS-OF ($k_{\max}=20$) & 0.1118 & 0.0058 & 0.1935 \\
NL-RMM-GKS-OF ($k_{\max}=25$) & 0.1113 & 0.0065 & 0.1936 \\
\midrule
\multicolumn{4}{l}{\textit{Anisotropic TV Regularization}} \\
NL-RMM-GKS-ANISO ($k_{\max}=10$) & 0.2767 & 0.0093 & 0.1942 \\
NL-RMM-GKS-ANISO ($k_{\max}=15$) & 0.2631 & 0.0010 & 0.1943 \\
NL-RMM-GKS-ANISO ($k_{\max}=20$) & 0.2625 & 0.0098 & 0.1943 \\
NL-RMM-GKS-ANISO ($k_{\max}=25$) & 0.2645 & 0.0104 & 0.1944 \\
\bottomrule
\end{tabular}
\end{table}

\section{Conclusions and Outlook.}
\label{sec:conclusions}

We have developed a comprehensive framework for nonlinear inverse problems 
with uncertain forward operators by extending the recycled majorization-minimization 
generalized Krylov subspace method to the nonlinear setting. The framework 
addresses the fundamental challenge that arises when the forward operator 
depends on unknown geometric or calibration parameters, coupling a 
large-scale image reconstruction problem with a nonlinear parameter 
estimation problem.

Our main contributions are:
\begin{enumerate}
\item Two complementary realizations --- AltMin and VarPro --- that couple 
image reconstruction with parameter estimation while maintaining bounded 
memory through Krylov recycling. Both are supported by a convergence 
guarantee to a stationary point of the joint objective 
(Theorem~\ref{thm:convergence}, Appendix~\ref{sec:appendix_proof}).

\item Streaming extensions that process sequential data blocks with basis 
recycling, enabling reconstruction from large-scale datasets without 
ever forming or storing the full forward operator.

\item A unified framework that generalizes MM-GKS, RMM-GKS, s-RMM-GKS, 
and MMGKS-OF as special cases, and naturally accommodates plug-in temporal 
regularization strategies including optical flow and anisotropic total 
variation for dynamic imaging problems.
\end{enumerate}

Numerical experiments in CT and PAT demonstrate that the recycling strategy 
successfully bounds memory growth --- preventing the unbounded subspace 
expansion that afflicts standard MM-GKS --- while simultaneously improving 
reconstruction quality by maintaining a high-quality compressed basis 
throughout iterations. Streaming variants offer a practical mechanism for 
handling large or sequentially acquired datasets, with the number of blocks 
$N$ providing a controllable trade-off between memory, runtime, and 
reconstruction quality. For dynamic problems, optical flow regularization 
substantially outperforms anisotropic total variation when inter-frame 
motion is large and structured, while ANISO-TV remains a simpler and 
effective choice when frame-to-frame changes are small.

Several directions remain open for future investigation:

\paragraph{Convergence rates.} 
Theorem~\ref{thm:convergence} establishes convergence to a stationary 
point but does not characterize the rate of convergence. Establishing 
linear or sublinear convergence rates --- potentially under additional 
assumptions on the objective --- would provide a more complete theoretical foundation and 
guide practical parameter selection.

\paragraph{Extension to multiple and distributed parameters.} 
The current framework estimates a single global geometric parameter $p$ 
per problem. Many practical systems require estimation of multiple 
independent parameters simultaneously --- for example, per-angle 
perturbations in CT or per-sensor position errors in PAT. Extending 
NL-RMM-GKS to high-dimensional parameter spaces will require careful 
treatment of the Gauss-Newton system as $n_p$ grows, potentially 
incorporating structure such as sparsity or low-rank constraints on 
the parameter vector.

\paragraph{Real data validation.}
All experiments in this paper use simulated data with known ground truth. 
Validation on real CT and PAT acquisitions with genuine geometric 
uncertainties is an important next step, requiring careful treatment of 
model mismatch, non-Gaussian noise, and the absence of ground truth for 
quantitative evaluation.

\paragraph{Learned initialization.}
The coarse grid search used for parameter initialization is effective but 
becomes expensive as $n_p$ grows. A natural extension is to replace the 
grid search with a learned predictor --- for example, a small network 
trained to predict geometric parameters from sinogram features --- which 
could provide fast, reliable initialization even for high-dimensional 
parameter spaces, while the subsequent NL-RMM-GKS refinement retains 
the theoretical guarantees of the optimization-based approach.

\paragraph{Adaptive block partitioning for streaming.}
The current streaming implementation uses fixed randomly assigned blocks. 
An adaptive strategy that partitions data based on information content could improve reconstruction quality at a given $N$ 
and reduce the number of passes required for convergence.

\section*{Acknowledgments}
MP acknowledges support from NSF DMS 2410699, MK acknowledges support from NSF DMS 2410698, and JN acknowledges support from NSF DMS-2038118. Any opinions, findings, conclusions or recommendations expressed in this material are those of the authors and do not necessarily reflect the views of the National Science Foundation. 
\bibliographystyle{siamplain}
\bibliography{references}

\appendix
\section{Proof of Theorem~\ref{thm:convergence}.}
\label{sec:appendix_proof}

We present the complete proof of Theorem~\ref{thm:convergence}. 
We first state the assumptions precisely, then establish a sequence 
of lemmas before combining them in the main proof. The extension to 
multiple inner iterations is given in Theorem~\ref{thm:convergence_multi}.

\subsection{Assumptions}

\begin{assumption}[Smoothness of $\mathbf{H}(\mathbf{p})$]
\label{assump:smoothness}
The matrix-valued function $\mathbf{H}(\mathbf{p})$ is twice continuously 
differentiable. There exist constants $C_H, C_{HH} > 0$ such that for all 
$\mathbf{p} \in \mathbb{R}^{n_p}$ and all $i, j \in \{1, \ldots, n_p\}$:
\begin{equation}
\left\|\frac{\partial \mathbf{H}(\mathbf{p})}{\partial p_i}\right\|_2 \leq C_H, 
\qquad
\left\|\frac{\partial^2 \mathbf{H}(\mathbf{p})}{\partial p_i \partial p_j}
\right\|_2 \leq C_{HH}.
\end{equation}
\end{assumption}

\begin{assumption}[Bounded iterates and operators]
\label{assump:bounded}
The iterates remain bounded: there exist $R_u, R_p > 0$ such that 
$\|\mathbf{u}^{(k)}\|_2 \leq R_u$ and $\|\mathbf{p}^{(k)}\|_2 \leq R_p$ 
for all $k$. The operators satisfy $\|\mathbf{H}(\mathbf{p})\|_2 \leq 
C_{\mathbf{H}}$ for all $\|\mathbf{p}\|_2 \leq R_p$, $\|\mathbf{b}\|_2 
\leq C_{\mathbf{b}}$, and $\|\boldsymbol{\Psi}\|_2 \leq C_{\boldsymbol{\Psi}}$ 
for positive constants $C_{\mathbf{H}}, C_{\mathbf{b}}, C_{\boldsymbol{\Psi}}$.
\end{assumption}

\begin{assumption}[Gauss-Newton matrix conditioning]
\label{assump:gn}
The damped Gauss-Newton matrix $\mathbf{J}_{\mathbf{p}}^{(k,\ell)}$ satisfies
\begin{equation}
\gamma_{\min} \mathbf{I} \preceq \mathbf{J}_{\mathbf{p}}^{(k,\ell)} 
\preceq \gamma_{\max} \mathbf{I}
\end{equation}
for all $k, \ell$ and constants $0 < \gamma_{\min} \leq \gamma_{\max} < \infty$, 
where $\gamma_{\min} \geq \mu$ is ensured by the damping parameter $\mu > 0$.
\end{assumption}

These assumptions are standard in nonlinear optimization. 
Assumption~\ref{assump:smoothness} holds for CT with angular perturbations 
and PAT with radial shifts. Assumption~\ref{assump:bounded} can be enforced 
via damping and trust-region strategies. Assumption~\ref{assump:gn} is 
guaranteed by the positive damping parameter $\mu > 0$.

\subsection{Proof Strategy}

The proof establishes that each outer iteration of 
Algorithm~\ref{alg:nl_rmmgks_two_solves} produces sufficient descent in 
$\mathcal{J}_{\epsilon,\lambda}$, and since the objective is bounded below, 
both gradient norms must be summable and hence converge to zero. The argument 
proceeds through six lemmas:

\begin{enumerate}
\item \textbf{Lemma~\ref{lem:Q_bound}:} Bound the spectral norm of the 
majorant Hessian $\mathbf{Q}^{(k)}$ uniformly over all iterations.
\item \textbf{Lemma~\ref{lem:u_descent}:} Show the image update produces 
descent of at least $\|\nabla_{\mathbf{u}}\mathcal{J}\|_2^2 / (2\bar{\mu})$.
\item \textbf{Lemma~\ref{lem:lipschitz_grad}:} Establish Lipschitz continuity 
of $\nabla_{\mathbf{p}}\mathcal{J}$ with explicit constant $L_{\mathbf{p}}$.
\item \textbf{Lemmas~\ref{lem:gn_descent} and~\ref{lem:gn_norm}:} Show the 
Gauss-Newton direction is a descent direction with bounded norm.
\item \textbf{Lemma~\ref{lem:step_size}:} Establish a positive lower bound 
$\alpha_{\min}$ on the backtracking step size.
\item \textbf{Lemma~\ref{lem:p_descent}:} Show the parameter update produces 
descent of at least $c_1\alpha_{\min}\gamma_{\max}^{-1}
\|\nabla_{\mathbf{p}}\mathcal{J}\|_2^2$.
\end{enumerate}

\subsection{Lemmas}

\begin{lemma}[Bound on majorant Hessian]
\label{lem:Q_bound}
Define the quadratic majorant Hessian
\begin{equation}
\mathbf{Q}^{(k)}(\mathbf{p}) := \mathbf{H}(\mathbf{p})^\top \mathbf{H}(\mathbf{p}) 
+ \lambda \boldsymbol{\Psi}^\top (\mathbf{P}^{(k)}_\varepsilon)^2 \boldsymbol{\Psi},
\end{equation}
where $\mathbf{P}^{(k)}_\varepsilon = \mathrm{diag}(((\boldsymbol{\Psi}
\mathbf{u}^{(k)})_j^2 + \varepsilon^2)^{-1/4})$. Then
\begin{equation}
\bar{\mu} := \sup_{k \geq 0} \|\mathbf{Q}^{(k)}(\mathbf{p}^{(k)})\|_2 
\leq C_{\mathbf{H}}^2 + \frac{\lambda}{\varepsilon} C_{\boldsymbol{\Psi}}^2 
< \infty.
\label{eq:mu_bound}
\end{equation}
\end{lemma}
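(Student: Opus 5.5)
The bound follows from the triangle inequality applied to the two summands of $\mathbf{Q}^{(k)}(\mathbf{p}^{(k)})$, together with an entrywise estimate on the diagonal weight matrix. First,
\[
\|\mathbf{Q}^{(k)}(\mathbf{p}^{(k)})\|_2 \le \|\mathbf{H}(\mathbf{p}^{(k)})^\top\mathbf{H}(\mathbf{p}^{(k)})\|_2 + \lambda\,\|\boldsymbol{\Psi}^\top(\mathbf{P}^{(k)}_\varepsilon)^2\boldsymbol{\Psi}\|_2 .
\]
For the first term I will use $\|\mathbf{A}^\top\mathbf{A}\|_2=\|\mathbf{A}\|_2^2$. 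Assumption~\ref{assump:bounded} gives $\|\mathbf{p}^{(k)}\|_2\le R_p$, so the operator bound $\|\mathbf{H}(\mathbf{p})\|_2\le C_{\mathbf{H}}$ applies at every iterate. This yields the bound $C_{\mathbf{H}}^2$, uniformly in $k$.

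For the second term, submultiplicativity gives $\|\boldsymbol{\Psi}^\top(\mathbf{P}^{(k)}_\varepsilon)^2\boldsymbol{\Psi}\|_2\le \|\boldsymbol{\Psi}\|_2^2\,\|(\mathbf{P}^{(k)}_\varepsilon)^2\|_2$. Since $\mathbf{P}^{(k)}_\varepsilon$ is diagonal with positive entries, its squared spectral norm is the largest squared diagonal entry:
\[
\max_j \bigl((\boldsymbol{\Psi}\mathbf{u}^{(k)})_j^2+\varepsilon^2\bigr)^{-1/2}.
\]
Each argument satisfies $(\boldsymbol{\Psi}\mathbf{u}^{(k)})_j^2+\varepsilon^2\ge \varepsilon^2$, and $t\mapsto t^{-1/2}$ is decreasing, so every entry is at most $\varepsilon^{-1}$. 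This bound holds regardless of the iterate $\mathbf{u}^{(k)}$; boundedness of $\mathbf{u}^{(k)}$ is not even needed here. Combining with $\|\boldsymbol{\Psi}\|_2\le C_{\boldsymbol{\Psi}}$ gives $\lambda\varepsilon^{-1}C_{\boldsymbol{\Psi}}^2$.

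Both bounds are independent of $k$, so taking the supremum over $k\ge 0$ gives \eqref{eq:mu_bound}, and finiteness follows from $\varepsilon>0$.

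There is no real obstacle. The only care needed is bookkeeping of exponents: the weights are defined with exponent $-1/4$, so their squares carry exponent $-1/2$, which gives $\varepsilon^{-1}$ rather than $\varepsilon^{-2}$. One must also treat $\lambda$ as a fixed constant. If $\lambda$ varies across iterations through the discrepancy principle, I would replace it by $\sup_k\lambda^{(k)}$ and assume that supremum is finite.
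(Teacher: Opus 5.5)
Your proof is correct and follows the paper's argument: the triangle inequality with submultiplicativity, $\|\mathbf{H}(\mathbf{p}^{(k)})\|_2\le C_{\mathbf{H}}$ from the bounded-iterate assumption, and the entrywise bound $((\boldsymbol{\Psi}\mathbf{u}^{(k)})_j^2+\varepsilon^2)^{-1/2}\le\varepsilon^{-1}$ on the squared weights. Your care with the $-1/4$ versus $-1/2$ exponents and your caveat about replacing $\lambda$ by $\sup_k\lambda^{(k)}$ when it is chosen adaptively are sound refinements that the paper leaves implicit.
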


\begin{proof}
By the triangle inequality and submultiplicativity:
\[
\|\mathbf{Q}^{(k)}(\mathbf{p}^{(k)})\|_2 \leq \|\mathbf{H}(\mathbf{p}^{(k)})\|_2^2 
+ \lambda \|\boldsymbol{\Psi}\|_2^2 \|\mathbf{P}^{(k)}_\varepsilon\|_2^2
\leq C_{\mathbf{H}}^2 + \lambda C_{\boldsymbol{\Psi}}^2 \cdot \varepsilon^{-1},
\]
where we used Assumption~\ref{assump:bounded} and the fact that each diagonal 
entry of $\mathbf{P}^{(k)}_\varepsilon$ satisfies 
$((\boldsymbol{\Psi}\mathbf{u}^{(k)})_j^2 + \varepsilon^2)^{-1/2} \leq 
\varepsilon^{-1}$.
\end{proof}

\begin{lemma}[Descent from image update]
\label{lem:u_descent}
Define the quadratic majorant
\begin{equation}
Q(\mathbf{u}, \mathbf{u}^{(k)}; \mathbf{p}^{(k)}) = \frac{1}{2} \mathbf{u}^\top 
\mathbf{Q}^{(k)}(\mathbf{p}^{(k)}) \mathbf{u} - \mathbf{u}^\top 
\mathbf{H}(\mathbf{p}^{(k)})^\top \mathbf{b} + c^{(k)},
\label{eq:Q_def}
\end{equation}
where $c^{(k)}$ is independent of $\mathbf{u}$, and let 
$\mathbf{r}^{(k)} = \nabla_{\mathbf{u}} \mathcal{J}_{\varepsilon,\lambda}
(\mathbf{u}^{(k)}; \mathbf{p}^{(k)})$. If
\begin{equation}
\mathbf{u}^{(k+1)} = \arg\min_{\mathbf{u} \in \mathrm{range}
[\mathbf{V}^{(k)},\, \mathbf{r}^{(k)}]} Q(\mathbf{u}, \mathbf{u}^{(k)}; 
\mathbf{p}^{(k)}),
\end{equation}
then
\begin{equation}
\mathcal{J}_{\varepsilon,\lambda}(\mathbf{u}^{(k+1)}; \mathbf{p}^{(k)}) 
\leq \mathcal{J}_{\varepsilon,\lambda}(\mathbf{u}^{(k)}; \mathbf{p}^{(k)}) 
- \frac{\|\mathbf{r}^{(k)}\|_2^2}{2\bar{\mu}}.
\label{eq:descent_bound}
\end{equation}
\end{lemma}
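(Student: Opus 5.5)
The proof will follow the standard majorization–minimization descent argument, combined with a single gradient-step comparison inside the search space.

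\textbf{Step 1: majorant properties.} With $\mathbf{p}=\mathbf{p}^{(k)}$ held fixed, $Q(\cdot,\mathbf{u}^{(k)};\mathbf{p}^{(k)})$ is the quadratic majorant from Section~\ref{sec:background}, with the constant $c^{(k)}$ chosen so that
\[
Q(\mathbf{u}^{(k)},\mathbf{u}^{(k)};\mathbf{p}^{(k)})=\mathcal{J}_{\varepsilon,\lambda}(\mathbf{u}^{(k)};\mathbf{p}^{(k)}).
\]
It also satisfies $\nabla_{\mathbf{u}}Q(\mathbf{u}^{(k)},\mathbf{u}^{(k)};\mathbf{p}^{(k)})=\mathbf{r}^{(k)}$ and $Q\ge\mathcal{J}_{\varepsilon,\lambda}(\cdot;\mathbf{p}^{(k)})$ everywhere. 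The majorization inequality comes from concavity of $t\mapsto\sqrt{t+\varepsilon^2}$: the tangent-line bound applied entrywise to $(\boldsymbol{\Psi}\mathbf{u})_j^2$ gives properties (i)–(iii) listed in Section~\ref{sec:background}. I will restate these briefly rather than reprove them.

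\textbf{Step 2: compare with a gradient step.} The key point is that the search space $\mathrm{range}[\mathbf{V}^{(k)},\mathbf{r}^{(k)}]$ contains the point $\mathbf{w}:=\mathbf{u}^{(k)}-\bar{\mu}^{-1}\mathbf{r}^{(k)}$. This requires $\mathbf{u}^{(k)}\in\mathrm{range}(\mathbf{V}^{(k)})$. In RMM-GKS this holds because the previous iterate is a combination of the old basis, and the Compress step explicitly appends the component of the current solution orthogonal to $\tilde{\mathbf{V}}$. In Algorithm~\ref{alg:nl_rmmgks_two_solves} the residual is then appended.

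Since $\mathbf{u}^{(k+1)}$ minimizes $Q$ over this subspace, $Q(\mathbf{u}^{(k+1)})\le Q(\mathbf{w})$. Because $Q$ is quadratic with Hessian $\mathbf{Q}^{(k)}(\mathbf{p}^{(k)})\preceq\bar{\mu}\mathbf{I}$ (Lemma~\ref{lem:Q_bound}), its exact Taylor expansion at $\mathbf{u}^{(k)}$ gives
\[
Q(\mathbf{w})\le Q(\mathbf{u}^{(k)})-\bar{\mu}^{-1}\|\mathbf{r}^{(k)}\|_2^2+\tfrac{1}{2}\bar{\mu}^{-1}\|\mathbf{r}^{(k)}\|_2^2 = \mathcal{J}_{\varepsilon,\lambda}(\mathbf{u}^{(k)};\mathbf{p}^{(k)})-\frac{\|\mathbf{r}^{(k)}\|_2^2}{2\bar{\mu}}.
\]

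\textbf{Step 3: conclude.} The majorization property gives $\mathcal{J}_{\varepsilon,\lambda}(\mathbf{u}^{(k+1)};\mathbf{p}^{(k)})\le Q(\mathbf{u}^{(k+1)})$. Chaining this with Step 2 yields \eqref{eq:descent_bound}.

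\textbf{Main obstacle.} The delicate point is Step 2's membership claim $\mathbf{u}^{(k)}\in\mathrm{range}(\mathbf{V}^{(k)})$, together with consistency between the weights used in $\mathbf{Q}^{(k)}$ and those defining $\mathbf{r}^{(k)}$. The gradient of $Q$ at $\mathbf{u}^{(k)}$ equals $\nabla\mathcal{J}$ only when $\mathbf{P}^{(k)}_\varepsilon$ is built from $\mathbf{u}^{(k)}$ itself. I would justify the membership via the solution-appending step of Compress, and state explicitly that $\mathbf{r}^{(k)}$ is evaluated with weights from $\mathbf{u}^{(k)}$.

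A secondary issue is that the majorization (iii) holds for the smoothed objective $\mathcal{J}_{\varepsilon,\lambda}$, whose penalty is $\lambda\sum_j\sqrt{(\boldsymbol{\Psi}\mathbf{u})_j^2+\varepsilon^2}$. This is the objective in which the lemma is stated, so I will work with that smoothed objective throughout.
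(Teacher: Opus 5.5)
Your proof is correct and follows essentially the paper's argument. The paper compares the subspace minimizer with the exact line-search point $\mathbf{u}^{(k)}-\alpha^*\mathbf{r}^{(k)}$, $\alpha^*=\|\mathbf{r}^{(k)}\|_2^2/((\mathbf{r}^{(k)})^\top\mathbf{Q}^{(k)}(\mathbf{p}^{(k)})\mathbf{r}^{(k)})$, whereas you use the fixed step $\bar{\mu}^{-1}$, which gives the same bound via Lemma~\ref{lem:Q_bound}. Your explicit requirement $\mathbf{u}^{(k)}\in\mathrm{range}(\mathbf{V}^{(k)})$, justified by the solution-appending step in Compress, is exactly the hypothesis the paper uses tacitly when it asserts that the subspace minimizer is at least as good as the line search along $-\mathbf{r}^{(k)}$.
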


\begin{proof}
Since $\mathbf{r}^{(k)} \in \mathrm{range}[\mathbf{V}^{(k)}, \mathbf{r}^{(k)}]$, 
the minimizer $\mathbf{u}^{(k+1)}$ is at least as good as the exact line search 
along $-\mathbf{r}^{(k)}$. The exact line search step size is
\[
\alpha^* = \frac{\|\mathbf{r}^{(k)}\|_2^2}{(\mathbf{r}^{(k)})^\top 
\mathbf{Q}^{(k)}(\mathbf{p}^{(k)}) \mathbf{r}^{(k)}},
\]
and substituting $\mathbf{u}^{(k)} - \alpha^* \mathbf{r}^{(k)}$ into 
\eqref{eq:Q_def} gives
\[
Q(\mathbf{u}^{(k)} - \alpha^* \mathbf{r}^{(k)}, \mathbf{u}^{(k)}; 
\mathbf{p}^{(k)}) = Q(\mathbf{u}^{(k)}, \mathbf{u}^{(k)}; \mathbf{p}^{(k)}) 
- \frac{\|\mathbf{r}^{(k)}\|_2^4}{2(\mathbf{r}^{(k)})^\top 
\mathbf{Q}^{(k)}(\mathbf{p}^{(k)}) \mathbf{r}^{(k)}}.
\]
Since $(\mathbf{r}^{(k)})^\top \mathbf{Q}^{(k)} \mathbf{r}^{(k)} \leq 
\bar{\mu}\|\mathbf{r}^{(k)}\|_2^2$ (Lemma~\ref{lem:Q_bound}), we obtain
\[
Q(\mathbf{u}^{(k+1)}, \mathbf{u}^{(k)}; \mathbf{p}^{(k)}) \leq 
Q(\mathbf{u}^{(k)}, \mathbf{u}^{(k)}; \mathbf{p}^{(k)}) - 
\frac{\|\mathbf{r}^{(k)}\|_2^2}{2\bar{\mu}}.
\]
The result then follows from the majorization property 
$\mathcal{J}_{\varepsilon,\lambda}(\mathbf{u}^{(k+1)}; \mathbf{p}^{(k)}) 
\leq Q(\mathbf{u}^{(k+1)}, \mathbf{u}^{(k)}; \mathbf{p}^{(k)})$, the 
tangency condition $Q(\mathbf{u}^{(k)}, \mathbf{u}^{(k)}; \mathbf{p}^{(k)}) 
= \mathcal{J}_{\varepsilon,\lambda}(\mathbf{u}^{(k)}; \mathbf{p}^{(k)})$, 
and the tangency of gradients 
$\nabla_{\mathbf{u}} Q(\mathbf{u}^{(k)}, \mathbf{u}^{(k)}; \mathbf{p}^{(k)}) 
= \nabla_{\mathbf{u}} \mathcal{J}_{\varepsilon,\lambda}(\mathbf{u}^{(k)}; 
\mathbf{p}^{(k)}) = \mathbf{r}^{(k)}$.
\end{proof}

\begin{lemma}[Lipschitz continuity of $\nabla_{\mathbf{p}} \mathcal{J}$]
\label{lem:lipschitz_grad}
For fixed $\mathbf{u}$ with $\|\mathbf{u}\|_2 \leq R_u$, the gradient 
$\nabla_{\mathbf{p}} \mathcal{J}_{\epsilon,\lambda}(\mathbf{u}, \mathbf{p})$ 
is Lipschitz continuous in $\mathbf{p}$ with constant
\begin{equation}
L_{\mathbf{p}} = C_H R_u^2 + C_{HH} R_u (C_{\mathbf{H}} R_u + C_{\mathbf{b}}).
\label{eq:Lp}
\end{equation}
\end{lemma}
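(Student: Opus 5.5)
The plan is to write $\nabla_{\mathbf{p}}\mathcal{J}_{\epsilon,\lambda}$ explicitly and bound the difference of its values at two parameter points by the mean value theorem. The regularization term $\lambda\|\boldsymbol{\Psi}\mathbf{u}\|$ (or its smoothed version) does not depend on $\mathbf{p}$, so only the data-fidelity term contributes. Its $i$th partial derivative is
\[
[\nabla_{\mathbf{p}}\mathcal{J}_{\epsilon,\lambda}(\mathbf{u},\mathbf{p})]_i = \Bigl(\frac{\partial\mathbf{H}(\mathbf{p})}{\partial p_i}\mathbf{u}\Bigr)^\top \bigl(\mathbf{H}(\mathbf{p})\mathbf{u}-\mathbf{b}\bigr),
\]
that is, $\nabla_{\mathbf{p}}\mathcal{J} = \mathbf{J}_{\mathbf{u}}(\mathbf{p})^\top \mathbf{r}(\mathbf{p})$, where $\mathbf{J}_{\mathbf{u}}$ is the Jacobian from Algorithm~\ref{alg:update_param} and $\mathbf{r}(\mathbf{p}) = \mathbf{H}(\mathbf{p})\mathbf{u}-\mathbf{b}$.

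The argument then runs in three steps.
\begin{enumerate}
\item Since $\mathbf{H}$ is $C^2$ by Assumption~\ref{assump:smoothness}, differentiate once more. The Hessian entries are
\[
\partial_{p_j}[\nabla_{\mathbf{p}}\mathcal{J}]_i = (\mathbf{H}_{p_i}\mathbf{u})^\top(\mathbf{H}_{p_j}\mathbf{u}) + (\mathbf{H}_{p_ip_j}\mathbf{u})^\top \mathbf{r}(\mathbf{p}).
\]
\item Bound each piece using Assumptions~\ref{assump:smoothness} and~\ref{assump:bounded}. The first term is at most $C_H^2 R_u^2$. For the second, $\|\mathbf{H}_{p_ip_j}\mathbf{u}\|_2 \le C_{HH}R_u$ and $\|\mathbf{r}(\mathbf{p})\|_2 \le C_{\mathbf{H}}R_u + C_{\mathbf{b}}$, so it is at most $C_{HH}R_u(C_{\mathbf{H}}R_u+C_{\mathbf{b}})$. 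This reproduces the second summand of \eqref{eq:Lp} exactly.
\item By the mean value (integral) form, $\|\nabla_{\mathbf{p}}\mathcal{J}(\mathbf{u},\mathbf{p})-\nabla_{\mathbf{p}}\mathcal{J}(\mathbf{u},\mathbf{p}')\|_2 \le \sup\|\nabla^2_{\mathbf{p}}\mathcal{J}\|_2\,\|\mathbf{p}-\mathbf{p}'\|_2$.
\end{enumerate}

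The main obstacle is matching the stated constant exactly. Two discrepancies must be handled.
\begin{itemize}
\item The first Hessian term naturally gives $C_H^2R_u^2$, not $C_HR_u^2$. I would either read the statement as having $C_H$ absorb the square, or, more honestly, state that the bound holds with $C_H^2$. It holds with $C_H$ as written only when $C_H\le 1$, or after redefining $C_H$ to dominate $C_H^2$.
\item Passing from entrywise bounds to a spectral-norm bound on the $n_p\times n_p$ Hessian introduces a dimension factor, for example via the Frobenius norm, $\|\cdot\|_2 \le n_p\max_{ij}|\cdot|$. This factor equals $1$ for the scalar-$p$ experiments in the paper. For general $n_p$ I would state it, or assume the bounds in Assumption~\ref{assump:smoothness} are understood for directional derivatives, so the constant carries over unchanged.
\end{itemize}

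A second subtlety is that $C_{\mathbf{H}}$ bounds $\|\mathbf{H}(\mathbf{p})\|_2$ only on the ball $\|\mathbf{p}\|_2\le R_p$. Since the ball is convex, the segment between $\mathbf{p}$ and $\mathbf{p}'$ stays inside it, so the mean value argument is valid there. I would therefore state the Lipschitz property on that ball, which is all the later step-size lemma requires.
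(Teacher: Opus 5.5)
Your argument is correct for the corrected constant $C_H^2R_u^2+C_{HH}R_u(C_{\mathbf H}R_u+C_{\mathbf b})$. It uses the same two-term split as the paper: a Gauss--Newton term built from $\mathbf H_{p_i}\mathbf u$, and a curvature-times-residual term $(\mathbf H_{p_ip_j}\mathbf u)^\top\mathbf r$. The packaging differs. You bound the $\mathbf p$-Hessian and integrate along the segment. The paper never forms the Hessian; it adds and subtracts $(\mathbf H_{p_i}(\mathbf p_1)\mathbf u)^\top\mathbf r(\mathbf p_2)$ and applies the mean value theorem separately to $\mathbf r$ and to $\mathbf H_{p_i}$.

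Both of your flags are justified, and they apply equally to the paper's proof.
\begin{itemize}
\item The paper's term $T_1$ is a product of two factors, each bounded by $C_HR_u$, so its own argument yields $C_H^2R_u^2$. The printed $C_HR_u^2$ is false in general. Take $\mathbf H(p)=p\mathbf A$ with $\|\mathbf A\|_2=C_H>1$, $\mathbf u$ a top right singular vector of norm $R_u$, and $C_{HH}$ arbitrarily small. The true Lipschitz constant is then $C_H^2R_u^2$.
\item The paper's vector mean-value step and its ``taking the Euclidean norm over all $i$'' drop the same $n_p$-dependent factor you identify. This is harmless only for scalar $p$.
\end{itemize}

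Where the paper's splitting is genuinely better is localization. The residual is the only quantity that needs $\|\mathbf H(\mathbf p)\|_2\le C_{\mathbf H}$, and the paper evaluates it only at the endpoint $\mathbf p_2$; $C_H$ and $C_{HH}$ hold globally. So only one of the two points must lie in the ball $\|\mathbf p\|_2\le R_p$. Your Hessian route needs the residual bound along the whole segment, which is why you had to restrict to the ball.

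This matters downstream, so your closing claim that the ball version is all Lemma~\ref{lem:step_size} needs does not hold. That lemma's Taylor bound is invoked at rejected backtracking trial points $\mathbf p+\alpha\mathbf d^{(k)}$. Assumption~\ref{assump:bounded} only bounds accepted iterates, so it does not place these trial points in the ball. What actually covers that step is the one-sided estimate from the paper's splitting, with $\mathbf p_2$ taken as the current iterate.
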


\begin{proof}
The regularization term does not depend on $\mathbf{p}$, so it suffices to 
consider $\mathcal{J}_{\mathrm{data}}(\mathbf{u}, \mathbf{p}) = \frac{1}{2}
\|\mathbf{H}(\mathbf{p})\mathbf{u} - \mathbf{b}\|_2^2$. The $i$-th gradient 
component is
\[
\left[\nabla_{\mathbf{p}} \mathcal{J}_{\mathrm{data}}\right]_i = 
\left(\frac{\partial \mathbf{H}}{\partial p_i}\mathbf{u}\right)^\top 
(\mathbf{H}(\mathbf{p})\mathbf{u} - \mathbf{b}).
\]
For $\mathbf{p}_1, \mathbf{p}_2$, adding and subtracting a cross term 
and applying Cauchy-Schwarz gives
\[
\left|\left[\nabla_{\mathbf{p}} \mathcal{J}_{\mathrm{data}}(\mathbf{u},
\mathbf{p}_1) - \nabla_{\mathbf{p}} \mathcal{J}_{\mathrm{data}}(\mathbf{u},
\mathbf{p}_2)\right]_i\right| \leq T_1 + T_2,
\]
where $T_1 = \left\|\frac{\partial \mathbf{H}(\mathbf{p}_1)}{\partial p_i}
\mathbf{u}\right\|_2 \|\mathbf{r}(\mathbf{p}_1) - \mathbf{r}(\mathbf{p}_2)
\|_2$ and $T_2 = \left\|\left(\frac{\partial \mathbf{H}(\mathbf{p}_1)}
{\partial p_i} - \frac{\partial \mathbf{H}(\mathbf{p}_2)}{\partial p_i}
\right)\mathbf{u}\right\|_2 \|\mathbf{r}(\mathbf{p}_2)\|_2$.
By Assumption~\ref{assump:smoothness} and the mean value theorem:
\[
\|\mathbf{r}(\mathbf{p}_1) - \mathbf{r}(\mathbf{p}_2)\|_2 \leq C_H R_u 
\|\mathbf{p}_1 - \mathbf{p}_2\|_2, \quad 
\left\|\left(\frac{\partial \mathbf{H}(\mathbf{p}_1)}{\partial p_i} - 
\frac{\partial \mathbf{H}(\mathbf{p}_2)}{\partial p_i}\right)\mathbf{u}
\right\|_2 \leq C_{HH} R_u \|\mathbf{p}_1 - \mathbf{p}_2\|_2,
\]
and by Assumption~\ref{assump:bounded}, $\|\mathbf{r}(\mathbf{p}_2)\|_2 \leq 
C_{\mathbf{H}} R_u + C_{\mathbf{b}}$. Substituting and taking the Euclidean 
norm over all $i$ yields \eqref{eq:Lp}.
\end{proof}

\begin{lemma}[Gauss-Newton descent direction]
\label{lem:gn_descent}
The Gauss-Newton direction $\mathbf{d}^{(k)} = -(\mathbf{J}_{\mathbf{p}}^{(k)})^{-1}
\mathbf{g}^{(k)}$ satisfies, whenever $\mathbf{g}^{(k)} \neq \mathbf{0}$:
\begin{equation}
\langle \mathbf{g}^{(k)}, \mathbf{d}^{(k)} \rangle \leq
-\frac{1}{\gamma_{\max}} \|\mathbf{g}^{(k)}\|_2^2 < 0.
\end{equation}
\end{lemma}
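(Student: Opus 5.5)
The plan is to use only the spectral bounds of Assumption~\ref{assump:gn} on the damped Gauss--Newton matrix. Since $\mathbf{J}_{\mathbf{p}}^{(k)} = \mathbf{J}_{\mathbf{u}}^\top\mathbf{J}_{\mathbf{u}} + \mu\mathbf{I}$ is symmetric and satisfies $\mathbf{J}_{\mathbf{p}}^{(k)} \succeq \gamma_{\min}\mathbf{I}$ with $\gamma_{\min}\ge\mu>0$, it is symmetric positive definite. Hence it is invertible, and $\mathbf{d}^{(k)}$ is well defined as the solution of the linear system in Algorithm~\ref{alg:update_param}.

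Substituting the definition of $\mathbf{d}^{(k)}$ gives
\[
\langle \mathbf{g}^{(k)}, \mathbf{d}^{(k)}\rangle = -(\mathbf{g}^{(k)})^\top (\mathbf{J}_{\mathbf{p}}^{(k)})^{-1}\mathbf{g}^{(k)},
\]
so the claim reduces to the lower bound $(\mathbf{J}_{\mathbf{p}}^{(k)})^{-1} \succeq \gamma_{\max}^{-1}\mathbf{I}$. To get it, I will use the spectral decomposition $\mathbf{J}_{\mathbf{p}}^{(k)} = \mathbf{U}\boldsymbol{\Lambda}\mathbf{U}^\top$. Every eigenvalue lies in $[\gamma_{\min},\gamma_{\max}]$, so every eigenvalue of the inverse lies in $[\gamma_{\max}^{-1},\gamma_{\min}^{-1}]$. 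The Rayleigh quotient bound then yields $(\mathbf{g}^{(k)})^\top(\mathbf{J}_{\mathbf{p}}^{(k)})^{-1}\mathbf{g}^{(k)} \ge \gamma_{\max}^{-1}\|\mathbf{g}^{(k)}\|_2^2$, which is the stated inequality. The strict inequality $-\gamma_{\max}^{-1}\|\mathbf{g}^{(k)}\|_2^2<0$ follows because $\gamma_{\max}<\infty$ and $\mathbf{g}^{(k)}\neq\mathbf{0}$.

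None of these steps is a real obstacle; the argument is a standard Rayleigh-quotient estimate. The one point that needs care is symmetry of $\mathbf{J}_{\mathbf{p}}^{(k)}$, since the inverse eigenvalue bound requires it. This holds by construction, as a Gram matrix plus a multiple of the identity. I will also note that the argument never uses the specific form of $\mathbf{g}^{(k)}$, so it applies equally to every inner iterate $\ell$, as Lemma~\ref{lem:step_size} will need. Finally, I will record the companion bound $\|\mathbf{d}^{(k)}\|_2\le\gamma_{\min}^{-1}\|\mathbf{g}^{(k)}\|_2$, which follows from the same spectral argument and sets up Lemma~\ref{lem:gn_norm}.
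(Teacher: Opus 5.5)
Your proposal is correct and matches the paper's proof, which likewise writes $\langle \mathbf{g}^{(k)},\mathbf{d}^{(k)}\rangle = -(\mathbf{g}^{(k)})^\top(\mathbf{J}_{\mathbf{p}}^{(k)})^{-1}\mathbf{g}^{(k)}$ and bounds it above by $-\|\mathbf{g}^{(k)}\|_2^2/\lambda_{\max}(\mathbf{J}_{\mathbf{p}}^{(k)}) \le -\|\mathbf{g}^{(k)}\|_2^2/\gamma_{\max}$ via the spectral theorem. Your explicit check of symmetry and invertibility is a small addition that the paper leaves implicit in Assumption~\ref{assump:gn}.
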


\begin{proof}
By Assumption~\ref{assump:gn} and the spectral theorem for symmetric
positive definite matrices:
\[
\langle \mathbf{g}^{(k)}, \mathbf{d}^{(k)} \rangle =
-(\mathbf{g}^{(k)})^\top (\mathbf{J}_{\mathbf{p}}^{(k)})^{-1} \mathbf{g}^{(k)}
\leq -\frac{\|\mathbf{g}^{(k)}\|_2^2}{\lambda_{\max}
(\mathbf{J}_{\mathbf{p}}^{(k)})} \leq -\frac{\|\mathbf{g}^{(k)}\|_2^2}
{\gamma_{\max}}.
\]
\end{proof}

\begin{lemma}[Gauss-Newton bounded norm]
\label{lem:gn_norm}
The Gauss-Newton direction $\mathbf{d}^{(k)} = -(\mathbf{J}_{\mathbf{p}}^{(k)})^{-1}
\mathbf{g}^{(k)}$ satisfies:
\begin{equation}
\|\mathbf{d}^{(k)}\|_2 \leq \frac{1}{\gamma_{\min}} \|\mathbf{g}^{(k)}\|_2.
\end{equation}
\end{lemma}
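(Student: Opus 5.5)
The bound follows from the spectral lower bound in Assumption~\ref{assump:gn}. First I will record that $\mathbf{J}_{\mathbf{p}}^{(k)} = \mathbf{J}_{\mathbf{u}}^\top\mathbf{J}_{\mathbf{u}} + \mu\mathbf{I}$ is symmetric. By Assumption~\ref{assump:gn} it satisfies $\mathbf{J}_{\mathbf{p}}^{(k)} \succeq \gamma_{\min}\mathbf{I}$ with $\gamma_{\min}>0$. Hence it is symmetric positive definite and invertible, so $\mathbf{d}^{(k)}$ is well defined.

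Next I will diagonalize by the spectral theorem, $\mathbf{J}_{\mathbf{p}}^{(k)} = \mathbf{U}\boldsymbol{\Lambda}\mathbf{U}^\top$ with $\mathbf{U}$ orthogonal. The eigenvalues satisfy $\lambda_i \in [\gamma_{\min},\gamma_{\max}]$. The inverse is then $\mathbf{U}\boldsymbol{\Lambda}^{-1}\mathbf{U}^\top$, whose eigenvalues are $\lambda_i^{-1} \le \gamma_{\min}^{-1}$. Since the inverse is symmetric positive definite, its spectral norm equals its largest eigenvalue:
\[
\|(\mathbf{J}_{\mathbf{p}}^{(k)})^{-1}\|_2 = \frac{1}{\lambda_{\min}(\mathbf{J}_{\mathbf{p}}^{(k)})} \le \frac{1}{\gamma_{\min}}.
\]

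Finally, submultiplicativity of the induced norm gives
\[
\|\mathbf{d}^{(k)}\|_2 = \|(\mathbf{J}_{\mathbf{p}}^{(k)})^{-1}\mathbf{g}^{(k)}\|_2 \le \|(\mathbf{J}_{\mathbf{p}}^{(k)})^{-1}\|_2\,\|\mathbf{g}^{(k)}\|_2 \le \frac{1}{\gamma_{\min}}\|\mathbf{g}^{(k)}\|_2 .
\]
Because $\gamma_{\min}\ge\mu$, this also yields the cruder bound $\|\mathbf{d}^{(k)}\|_2 \le \mu^{-1}\|\mathbf{g}^{(k)}\|_2$, which depends only on the damping parameter.

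There is no real obstacle. The only point to check is that the Loewner lower bound $\gamma_{\min}\mathbf{I}$ transfers to the norm of the inverse, and this relies on symmetry. Symmetry holds here because $\mathbf{J}_{\mathbf{p}}^{(k)}$ is a Gram matrix plus $\mu\mathbf{I}$.
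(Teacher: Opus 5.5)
Your proof is correct and follows the paper's route. The paper likewise bounds $\|\mathbf{d}^{(k)}\|_2 \le \|(\mathbf{J}_{\mathbf{p}}^{(k)})^{-1}\|_2\|\mathbf{g}^{(k)}\|_2$ and uses $\|(\mathbf{J}_{\mathbf{p}}^{(k)})^{-1}\|_2 = 1/\lambda_{\min}(\mathbf{J}_{\mathbf{p}}^{(k)}) \le 1/\gamma_{\min}$ from Assumption~\ref{assump:gn}; your explicit note that symmetry is what lets the Loewner bound pass to the norm of the inverse is a detail the paper leaves implicit.
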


\begin{proof}
By Assumption~\ref{assump:gn}:
\[
\|\mathbf{d}^{(k)}\|_2 \leq \|(\mathbf{J}_{\mathbf{p}}^{(k)})^{-1}\|_2
\|\mathbf{g}^{(k)}\|_2 = \frac{\|\mathbf{g}^{(k)}\|_2}
{\lambda_{\min}(\mathbf{J}_{\mathbf{p}}^{(k)})} \leq
\frac{\|\mathbf{g}^{(k)}\|_2}{\gamma_{\min}}.
\]
\end{proof}

\begin{lemma}[Lower bound on backtracking step size]
\label{lem:step_size}
Under backtracking line search with initial step $\bar{\alpha} > 0$, 
reduction factor $\beta \in (0,1)$, and Armijo constant $c_1 \in (0,1)$, 
the accepted step size satisfies $\alpha^{(k)} \geq \alpha_{\min}$, where
\begin{equation}
\alpha_{\min} := \min\left\{\bar{\alpha},\; 
\frac{2\beta(1-c_1)\gamma_{\min}^2}{L_{\mathbf{p}}\gamma_{\max}}\right\} > 0.
\end{equation}
\end{lemma}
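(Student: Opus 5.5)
The plan is the standard backtracking argument: show that every trial step $\alpha$ below an explicit threshold satisfies the Armijo condition. Backtracking stops at the first acceptable trial, so the accepted step can fall below the threshold by at most a factor $\beta$.

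Fix the outer index $k$ and inner index $\ell$, and write $\mathbf{g}=\mathbf{g}^{(k)}$ and $\mathbf{d}=\mathbf{d}^{(k)}$. Let $\phi(\alpha)=\mathcal{J}_{\epsilon,\lambda}(\mathbf{u}^{(k)},\mathbf{p}+\alpha\mathbf{d})$. The image is held fixed, so Lemma~\ref{lem:lipschitz_grad} says that $\nabla_{\mathbf{p}}\mathcal{J}$ is $L_{\mathbf{p}}$-Lipschitz. The descent lemma, obtained by integrating $\phi'$ along the segment, then gives
\[
\phi(\alpha)\le \phi(0)+\alpha\,\mathbf{g}^\top\mathbf{d}+\tfrac{L_{\mathbf{p}}}{2}\alpha^2\|\mathbf{d}\|_2^2 .
\]
The Armijo condition $\phi(\alpha)\le\phi(0)+c_1\alpha\,\mathbf{g}^\top\mathbf{d}$ therefore holds whenever $\tfrac{L_{\mathbf{p}}}{2}\alpha\|\mathbf{d}\|_2^2\le (1-c_1)(-\mathbf{g}^\top\mathbf{d})$. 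Lemma~\ref{lem:gn_descent} gives $-\mathbf{g}^\top\mathbf{d}\ge\|\mathbf{g}\|_2^2/\gamma_{\max}$, and Lemma~\ref{lem:gn_norm} gives $\|\mathbf{d}\|_2^2\le\|\mathbf{g}\|_2^2/\gamma_{\min}^2$. Substituting these, a sufficient condition (with $\|\mathbf{g}\|_2^2$ cancelling, assuming $\mathbf{g}\neq\mathbf{0}$) is
\[
\alpha\le \hat\alpha:=\frac{2(1-c_1)\gamma_{\min}^2}{L_{\mathbf{p}}\gamma_{\max}} .
\]

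Next comes the case split on the backtracking loop. If $\bar\alpha$ is accepted immediately, then $\alpha^{(k)}=\bar\alpha\ge\alpha_{\min}$. Otherwise the previous trial $\alpha^{(k)}/\beta$ was rejected. By the previous paragraph a rejected trial must exceed $\hat\alpha$, so $\alpha^{(k)}/\beta>\hat\alpha$ and hence $\alpha^{(k)}>\beta\hat\alpha$. Both cases give $\alpha^{(k)}\ge\min\{\bar\alpha,\beta\hat\alpha\}=\alpha_{\min}$. Since $\beta\hat\alpha>0$, the loop terminates after finitely many reductions. If $\mathbf{g}=\mathbf{0}$, then $\mathbf{d}=\mathbf{0}$, any step is accepted, and the bound holds trivially.

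The main obstacle is justifying the descent lemma along the whole segment $[\mathbf{p},\mathbf{p}+\alpha\mathbf{d}]$. Lemma~\ref{lem:lipschitz_grad} uses $\|\mathbf{H}(\mathbf{p})\|_2\le C_{\mathbf{H}}$, which Assumption~\ref{assump:bounded} guarantees only for $\|\mathbf{p}\|_2\le R_p$, and trial points might leave that ball. I would handle this in two ways. First, $C_{\mathbf{H}}$ can be taken to be a bound valid on a neighborhood containing all trial points: $\|\mathbf{d}\|_2\le\|\mathbf{g}\|_2/\gamma_{\min}$, and $\|\mathbf{g}\|_2\le\sqrt{n_p}\,C_HR_u(C_{\mathbf{H}}R_u+C_{\mathbf{b}})$ is uniformly bounded, so every trial step stays within a bounded distance $\bar\alpha\|\mathbf{d}\|_2$ of the iterate. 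Second, $\|\partial\mathbf{H}/\partial p_i\|_2\le C_H$ holds globally, so $\mathbf{H}$ grows at most linearly and is bounded on that enlarged ball. Failing that, I would simply read Lemma~\ref{lem:lipschitz_grad} as a global Lipschitz bound, as the paper states it.
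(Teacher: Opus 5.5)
Your proposal is correct and takes essentially the same route as the paper. The descent-lemma bound from Lemma~\ref{lem:lipschitz_grad}, combined with Lemmas~\ref{lem:gn_norm} and~\ref{lem:gn_descent}, shows that every trial step $\alpha \le 2(1-c_1)\gamma_{\min}^2/(L_{\mathbf{p}}\gamma_{\max})$ passes the Armijo test, and the backtracking factor $\beta$ then gives $\alpha_{\min}$. You also cover details the paper's terse proof passes over: the case $\mathbf{g}=\mathbf{0}$, finite termination, and the fact that trial points may leave the ball $\|\mathbf{p}\|_2\le R_p$ on which $C_{\mathbf{H}}$ is assumed, which you repair by enlarging that constant (and hence $L_{\mathbf{p}}$) via the global bound on $\partial\mathbf{H}/\partial p_i$.
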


\begin{proof}
By Taylor expansion and Lemma~\ref{lem:lipschitz_grad}:
\[
\mathcal{J}(\mathbf{u}^{(k)}, \mathbf{p} + \alpha \mathbf{d}^{(k)}) \leq 
\mathcal{J}(\mathbf{u}^{(k)}, \mathbf{p}) + \alpha \langle \mathbf{g}^{(k)}, 
\mathbf{d}^{(k)} \rangle + \frac{L_{\mathbf{p}} \alpha^2}{2\gamma_{\min}^2} 
\|\mathbf{g}^{(k)}\|_2^2,
\]
where we used Lemma~\ref{lem:gn_norm} to bound $\|\mathbf{d}^{(k)}\|_2$. 
If $\alpha$ fails the Armijo condition, comparing with 
$c_1 \alpha \langle \mathbf{g}^{(k)}, \mathbf{d}^{(k)} \rangle$ and using 
Lemma~\ref{lem:gn_descent} gives $\alpha > 2(1-c_1)\gamma_{\min}^2 / 
(L_{\mathbf{p}}\gamma_{\max})$. Since backtracking reduces $\alpha$ by 
factor $\beta$ before accepting, the accepted step satisfies the stated bound.
\end{proof}

\begin{lemma}[Descent from parameter update]
\label{lem:p_descent}
One Gauss-Newton iteration with backtracking line search produces:
\begin{equation}
\mathcal{J}_{\epsilon,\lambda}(\mathbf{u}^{(k)}, \mathbf{p}^{(k)}) \leq 
\mathcal{J}_{\epsilon,\lambda}(\mathbf{u}^{(k)}, \mathbf{p}^{(k-1)}) - 
\frac{c_1 \alpha_{\min}}{\gamma_{\max}} \|\mathbf{g}^{(k)}\|_2^2.
\end{equation}
\end{lemma}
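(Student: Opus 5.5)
The plan is to combine the Armijo acceptance condition with the two lower bounds already established: the step-size bound of Lemma~\ref{lem:step_size} and the descent-direction bound of Lemma~\ref{lem:gn_descent}. Throughout, $\mathbf{u}^{(k)}$ is fixed, and we write $\mathbf{p}^{(k)} = \mathbf{p}^{(k-1)} + \alpha^{(k)}\mathbf{d}^{(k)}$. Here $\mathbf{g}^{(k)} = \nabla_{\mathbf{p}}\mathcal{J}_{\epsilon,\lambda}(\mathbf{u}^{(k)},\mathbf{p}^{(k-1)})$ and $\mathbf{d}^{(k)} = -(\mathbf{J}_{\mathbf{p}}^{(k)})^{-1}\mathbf{g}^{(k)}$, as in Algorithm~\ref{alg:update_param} with a single inner iteration.

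\textbf{Case $\mathbf{g}^{(k)} = \mathbf{0}$.} Then $\mathbf{d}^{(k)} = \mathbf{0}$, so $\mathbf{p}^{(k)} = \mathbf{p}^{(k-1)}$ and the inequality holds with equality.

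\textbf{Case $\mathbf{g}^{(k)} \neq \mathbf{0}$.} Lemma~\ref{lem:gn_descent} shows that $\mathbf{d}^{(k)}$ is a strict descent direction. By the argument in the proof of Lemma~\ref{lem:step_size}, every $\alpha \le 2(1-c_1)\gamma_{\min}^2/(L_{\mathbf{p}}\gamma_{\max})$ satisfies the Armijo condition. Hence backtracking terminates after finitely many reductions, and the accepted step satisfies $\alpha^{(k)} \ge \alpha_{\min}$. The Armijo condition at the accepted step reads
\[
\mathcal{J}_{\epsilon,\lambda}(\mathbf{u}^{(k)},\mathbf{p}^{(k)}) \le \mathcal{J}_{\epsilon,\lambda}(\mathbf{u}^{(k)},\mathbf{p}^{(k-1)}) + c_1\alpha^{(k)}\langle \mathbf{g}^{(k)},\mathbf{d}^{(k)}\rangle .
\]
Since the inner product is negative, Lemma~\ref{lem:gn_descent} bounds the last term by $-c_1\alpha^{(k)}\gamma_{\max}^{-1}\|\mathbf{g}^{(k)}\|_2^2$. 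Replacing $\alpha^{(k)}$ by the smaller quantity $\alpha_{\min}$ only weakens this (negative) term, which gives exactly the claimed bound.

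The main point needing care is the applicability of Lemma~\ref{lem:lipschitz_grad}. Its Lipschitz constant $L_{\mathbf{p}}$ requires $\|\mathbf{u}^{(k)}\|_2\le R_u$ and uses a residual bound that is valid on the ball $\|\mathbf{p}\|_2\le R_p$. The Taylor and descent-lemma estimate along the segment from $\mathbf{p}^{(k-1)}$ to $\mathbf{p}^{(k-1)}+\alpha\mathbf{d}^{(k)}$ is therefore justified only if that segment stays where these bounds hold. I would address this by invoking Assumption~\ref{assump:bounded}, whose operator bound I would read as holding uniformly on a neighborhood that contains all trial points. Alternatively, $\|\mathbf{d}^{(k)}\|_2 \le \|\mathbf{g}^{(k)}\|_2/\gamma_{\min}$ (Lemma~\ref{lem:gn_norm}) is uniformly bounded, so one can enlarge $R_p$ by $\bar\alpha\sup_k\|\mathbf{g}^{(k)}\|_2/\gamma_{\min}$ and redefine the constants accordingly.

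Finally, I would note that if $\mathrm{maxiter}_p>1$, the same estimate applies to each inner Gauss--Newton step. The descents then telescope, and keeping only the first step's descent still yields the stated inequality, since every later step is nonincreasing in $\mathcal{J}$.
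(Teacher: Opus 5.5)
Your proof is correct and follows the paper's argument: apply the Armijo condition at the accepted step, bound $\langle \mathbf{g}^{(k)},\mathbf{d}^{(k)}\rangle$ via Lemma~\ref{lem:gn_descent}, and use $\alpha^{(k)}\ge\alpha_{\min}$ from Lemma~\ref{lem:step_size}. Your extra points are sound refinements of things the paper passes over silently: the trivial case $\mathbf{g}^{(k)}=\mathbf{0}$, and the fact that $L_{\mathbf{p}}$ relies on the residual bound valid only for $\|\mathbf{p}\|_2\le R_p$, so the trial segment must be kept inside a suitably enlarged ball.
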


\begin{proof}
By the Armijo condition and Lemmas~\ref{lem:gn_descent} 
and~\ref{lem:step_size}:
\[
\mathcal{J}_{\epsilon,\lambda}(\mathbf{u}^{(k)}, \mathbf{p}^{(k)}) \leq 
\mathcal{J}_{\epsilon,\lambda}(\mathbf{u}^{(k)}, \mathbf{p}^{(k-1)}) + 
c_1 \alpha^{(k)} \langle \mathbf{g}^{(k)}, \mathbf{d}^{(k)} \rangle \leq 
\mathcal{J}_{\epsilon,\lambda}(\mathbf{u}^{(k)}, \mathbf{p}^{(k-1)}) - 
\frac{c_1 \alpha_{\min}}{\gamma_{\max}} \|\mathbf{g}^{(k)}\|_2^2. 
\]
\end{proof}

\subsection{Main Proof}

\begin{proof}[Proof of Theorem~\ref{thm:convergence}]
Define $\kappa_u := (2\bar{\mu})^{-1}$ and $\kappa_p := c_1\alpha_{\min}
\gamma_{\max}^{-1}$, both positive by Lemmas~\ref{lem:Q_bound} 
and~\ref{lem:step_size}.

\textbf{Step 1: Per-iteration descent.}
Applying Lemma~\ref{lem:u_descent} to the image update and 
Lemma~\ref{lem:p_descent} to the parameter update at each outer 
iteration $k$:
\begin{equation}
\mathcal{J}_{\epsilon,\lambda}(\mathbf{u}^{(k)}, \mathbf{p}^{(k)}) \leq 
\mathcal{J}_{\epsilon,\lambda}(\mathbf{u}^{(k-1)}, \mathbf{p}^{(k-1)}) 
- \kappa_u \|\nabla_{\mathbf{u}} \mathcal{J}(\mathbf{u}^{(k-1)}, 
\mathbf{p}^{(k-1)})\|_2^2 - \kappa_p \|\mathbf{g}^{(k)}\|_2^2.
\label{eq:global_descent}
\end{equation}

\textbf{Step 2: Summability.}
Since $\mathcal{J}_{\epsilon,\lambda} \geq 0$, telescoping 
\eqref{eq:global_descent} from $k = 1$ to $K$ gives
\[
\kappa_u \sum_{k=0}^{K-1} \|\nabla_{\mathbf{u}} \mathcal{J}(\mathbf{u}^{(k)}, 
\mathbf{p}^{(k)})\|_2^2 + \kappa_p \sum_{k=1}^{K} \|\mathbf{g}^{(k)}\|_2^2 
\leq \mathcal{J}_{\epsilon,\lambda}(\mathbf{u}^{(0)}, \mathbf{p}^{(0)}) 
< \infty.
\]
Taking $K \to \infty$:
\begin{equation}
\sum_{k=0}^{\infty} \|\nabla_{\mathbf{u}} \mathcal{J}(\mathbf{u}^{(k)}, 
\mathbf{p}^{(k)})\|_2^2 < \infty \quad \text{and} \quad 
\sum_{k=1}^{\infty} \|\mathbf{g}^{(k)}\|_2^2 < \infty.
\label{eq:summability}
\end{equation}

\textbf{Step 3: $\nabla_{\mathbf{u}}\mathcal{J} \to 0$.}
A non-negative sequence whose terms are summable must converge to zero, 
so \eqref{eq:summability} immediately gives
\[
\lim_{k \to \infty} \|\nabla_{\mathbf{u}} \mathcal{J}_{\epsilon,\lambda}
(\mathbf{u}^{(k)}, \mathbf{p}^{(k)})\|_2 = 0.
\]

\textbf{Step 4: $\nabla_{\mathbf{p}}\mathcal{J} \to 0$.}
Recall $\mathbf{g}^{(k)} = \nabla_{\mathbf{p}} \mathcal{J}(\mathbf{u}^{(k)}, 
\mathbf{p}^{(k-1)})$. Since $\|\mathbf{g}^{(k)}\|_2 \to 0$ from 
\eqref{eq:summability}, it remains to transfer this to 
$\nabla_{\mathbf{p}} \mathcal{J}(\mathbf{u}^{(k)}, \mathbf{p}^{(k)})$. 
By Lemma~\ref{lem:gn_norm} and the update rule:
\[
\|\mathbf{p}^{(k)} - \mathbf{p}^{(k-1)}\|_2 \leq 
\frac{\bar{\alpha}}{\gamma_{\min}} \|\mathbf{g}^{(k)}\|_2.
\]
Applying Lemma~\ref{lem:lipschitz_grad} and the triangle inequality:
\[
\|\nabla_{\mathbf{p}} \mathcal{J}(\mathbf{u}^{(k)}, \mathbf{p}^{(k)})\|_2 
\leq \left(\frac{L_{\mathbf{p}} \bar{\alpha}}{\gamma_{\min}} + 1\right) 
\|\mathbf{g}^{(k)}\|_2 \to 0. 
\]
\end{proof}

\subsection{Extension to Multiple Inner Iterations}

\begin{theorem}[Convergence with multiple inner iterations]
\label{thm:convergence_multi}
Under Assumptions~\ref{assump:smoothness}--\ref{assump:gn}, if RMM-GKS 
performs $m_k \geq 1$ inner iterations and the parameter update performs 
$\mathrm{maxiter}_p \geq 1$ inner Gauss-Newton iterations at each outer 
iteration $k$, then
\[
\lim_{k \to \infty} \|\nabla_{\mathbf{u}} \mathcal{J}_{\epsilon,\lambda}
(\mathbf{u}^{(k)}, \mathbf{p}^{(k)})\|_2 = 0 \quad \text{and} \quad 
\lim_{k \to \infty} \|\nabla_{\mathbf{p}} \mathcal{J}_{\epsilon,\lambda}
(\mathbf{u}^{(k)}, \mathbf{p}^{(k)})\|_2 = 0.
\]
\end{theorem}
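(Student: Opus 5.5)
The plan is to reuse the descent-chain argument from the proof of Theorem~\ref{thm:convergence}. Each outer iteration now consists of $m_k$ MM/GKS sub-steps in $\mathbf{u}$ followed by $\mathrm{maxiter}_p$ Gauss--Newton sub-steps in $\mathbf{p}$. Every sub-step is monotone, and only the \emph{first} sub-step of each block is needed to extract a sufficient-decrease term.

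Write $\mathbf{u}^{(k,0)}=\mathbf{u}^{(k)}$, and let $\mathbf{u}^{(k,1)},\ldots,\mathbf{u}^{(k,m_k)}=\mathbf{u}^{(k+1)}$ be the RMM-GKS inner iterates computed with $\mathbf{p}^{(k)}$ fixed.

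\textbf{Image block.} At inner step $j$ the majorant is rebuilt at $\mathbf{u}^{(k,j)}$. By Lemma~\ref{lem:Q_bound}, its Hessian is bounded by $\bar\mu$ uniformly in $(k,j)$, because the bound $\varepsilon^{-1}$ on the weights does not depend on the current iterate. Lemma~\ref{lem:u_descent} then applies verbatim at every inner step, provided the search space contains the current residual direction. This holds because the residual is appended before each enlargement. For compression steps, I would use the fact stated in Section~\ref{sec:rmmgks} that the current solution is appended to the compressed basis, so the minimizer over the new space is no worse than $\mathbf{u}^{(k,j)}$.

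Summing over inner steps gives
\[
\mathcal{J}(\mathbf{u}^{(k+1)},\mathbf{p}^{(k)})\le \mathcal{J}(\mathbf{u}^{(k)},\mathbf{p}^{(k)})-\kappa_u\sum_{j=0}^{m_k-1}\|\nabla_{\mathbf{u}}\mathcal{J}(\mathbf{u}^{(k,j)},\mathbf{p}^{(k)})\|_2^2 ,
\]
and I keep only the $j=0$ term.

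\textbf{Parameter block.} Let $\mathbf{p}^{(k,\ell)}$ be the Gauss--Newton iterates. Lemmas~\ref{lem:gn_descent}--\ref{lem:p_descent} hold for every $\ell$, since Assumption~\ref{assump:gn} is uniform in $(k,\ell)$, and the constant $L_{\mathbf{p}}$ of Lemma~\ref{lem:lipschitz_grad} depends only on $R_u$. Each inner step therefore decreases $\mathcal{J}$ by at least $\kappa_p\|\mathbf{g}^{(k,\ell)}\|_2^2$.

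\textbf{Summability.} Chaining both blocks and telescoping against $\mathcal{J}\ge0$ yields summability of $\|\nabla_{\mathbf{u}}\mathcal{J}(\mathbf{u}^{(k)},\mathbf{p}^{(k)})\|^2$ over $k$. It also yields summability of $\sum_{\ell}\|\mathbf{g}^{(k,\ell)}\|^2$ over $k$. The first gives the $\mathbf{u}$-limit directly, as in Step~3 of the previous proof.

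\textbf{Transfer for $\nabla_{\mathbf{p}}$.} This is the step I expect to be the main obstacle. We need $\nabla_{\mathbf{p}}\mathcal{J}(\mathbf{u}^{(k)},\mathbf{p}^{(k)})\to0$, but the Gauss--Newton gradients are evaluated at $(\mathbf{u}^{(k+1)},\mathbf{p}^{(k,\ell)})$, with the new image. I would handle this in two stages.

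First, the total parameter movement in block $k$ satisfies
\[
\|\mathbf{p}^{(k+1)}-\mathbf{p}^{(k)}\|\le \frac{\bar\alpha}{\gamma_{\min}}\sum_{\ell}\|\mathbf{g}^{(k,\ell)}\|\le \frac{\bar\alpha\sqrt{\mathrm{maxiter}_p}}{\gamma_{\min}}\Big(\sum_\ell\|\mathbf{g}^{(k,\ell)}\|^2\Big)^{1/2}\to0 .
\]
Combined with Lemma~\ref{lem:lipschitz_grad}, this transfers smallness of $\mathbf{g}^{(k,0)}=\nabla_{\mathbf{p}}\mathcal{J}(\mathbf{u}^{(k+1)},\mathbf{p}^{(k)})$ to $\nabla_{\mathbf{p}}\mathcal{J}(\mathbf{u}^{(k+1)},\mathbf{p}^{(k+1)})$.

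Second, after an index shift the claim follows, since $\nabla_{\mathbf{p}}\mathcal{J}$ is evaluated at the pair $(\mathbf{u}^{(k)},\mathbf{p}^{(k)})$ produced at the end of outer iteration $k-1$. If one insists on the mid-iteration pairing, one needs continuity of $\nabla_{\mathbf{p}}\mathcal{J}$ in $\mathbf{u}$. This is Lipschitz on the ball of radius $R_u$ with constant $C_H(2C_{\mathbf{H}}R_u+C_{\mathbf{b}})$, together with $\|\mathbf{u}^{(k+1)}-\mathbf{u}^{(k)}\|\to0$. The latter follows from the $\mathbf{u}$-descent via strong convexity of the majorant. That strong convexity requires a lower bound on $\mathbf{Q}^{(k)}$, i.e.\ $\ker\mathbf{H}(\mathbf{p})\cap\ker\boldsymbol{\Psi}=\{0\}$ uniformly, which I would add as a mild hypothesis if needed.
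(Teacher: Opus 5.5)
Your proposal is correct and follows the paper's proof essentially step for step: per-substep descent from Lemmas~\ref{lem:u_descent} and~\ref{lem:p_descent}, telescoping against $\mathcal{J}_{\epsilon,\lambda}\ge 0$, the $\mathbf{u}^{(k,0)}=\mathbf{u}^{(k)}$ term for the $\mathbf{u}$-gradient, and a Lipschitz transfer in $\mathbf{p}$ combined with the same index shift you note. The only difference is that the paper transfers from the \emph{last} Gauss--Newton gradient $\mathbf{g}^{(k+1,\mathrm{maxiter}_p-1)}$ across only the final step (of length at most $\bar\alpha\gamma_{\min}^{-1}\|\mathbf{g}^{(k+1,\mathrm{maxiter}_p-1)}\|_2$), which gives the factor $L_{\mathbf{p}}\bar\alpha/\gamma_{\min}+1$ without your $\sqrt{\mathrm{maxiter}_p}$ total-displacement bound; your optional mid-iteration pairing with the extra kernel hypothesis is not needed.
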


\begin{proof}
Applying Lemma~\ref{lem:u_descent} to each of the $m_k$ inner image 
iterations and telescoping:
\begin{equation}
\mathcal{J}(\mathbf{u}^{(k+1)}, \mathbf{p}^{(k)}) \leq 
\mathcal{J}(\mathbf{u}^{(k)}, \mathbf{p}^{(k)}) - \kappa_u 
\sum_{i=0}^{m_k-1} \|\nabla_{\mathbf{u}} \mathcal{J}(\mathbf{u}^{(k,i)}, 
\mathbf{p}^{(k)})\|_2^2.
\label{eq:u_multi}
\end{equation}
Applying Lemma~\ref{lem:p_descent} to each of the $\mathrm{maxiter}_p$ 
inner parameter iterations and telescoping:
\begin{equation}
\mathcal{J}(\mathbf{u}^{(k+1)}, \mathbf{p}^{(k+1)}) \leq 
\mathcal{J}(\mathbf{u}^{(k+1)}, \mathbf{p}^{(k)}) - \kappa_p 
\sum_{\ell=0}^{\mathrm{maxiter}_p - 1} \|\mathbf{g}^{(k+1,\ell)}\|_2^2.
\label{eq:p_multi}
\end{equation}
Combining \eqref{eq:u_multi} and \eqref{eq:p_multi} and summing over 
$k = 0, \ldots, K-1$:
\[
\kappa_u \sum_{k=0}^{K-1} \sum_{i=0}^{m_k-1} \|\nabla_{\mathbf{u}} 
\mathcal{J}(\mathbf{u}^{(k,i)}, \mathbf{p}^{(k)})\|_2^2 + \kappa_p 
\sum_{k=0}^{K-1} \sum_{\ell=0}^{\mathrm{maxiter}_p-1} 
\|\mathbf{g}^{(k+1,\ell)}\|_2^2 \leq \mathcal{J}(\mathbf{u}^{(0)}, 
\mathbf{p}^{(0)}) < \infty.
\]
Taking $K \to \infty$, both double sums are finite. Since 
$\mathbf{u}^{(k)} = \mathbf{u}^{(k,0)}$ appears in the first sum, 
$\|\nabla_{\mathbf{u}} \mathcal{J}(\mathbf{u}^{(k)}, \mathbf{p}^{(k)})\|_2 
\to 0$ by summability of non-negative terms. For the $\mathbf{p}$-gradient, 
since $\|\mathbf{g}^{(k+1, \mathrm{maxiter}_p - 1)}\|_2 \to 0$ by summability, 
the same Lipschitz argument as in Step 4 of the main proof gives
\[
\|\nabla_{\mathbf{p}} \mathcal{J}(\mathbf{u}^{(k+1)}, \mathbf{p}^{(k+1)})\|_2 
\leq \left(\frac{L_{\mathbf{p}}\bar{\alpha}}{\gamma_{\min}} + 1\right) 
\|\mathbf{g}^{(k+1, \mathrm{maxiter}_p - 1)}\|_2 \to 0. 
\]
\end{proof}

\section{Supplementary Material}
Section~\ref{sec:supp_algorithms} collects algorithms that were omitted 
from the main text to save space. 
Section~\ref{sec:supp_inner_iters} studies the effect of inner iteration 
count on streaming reconstruction quality. 

\subsection{Supporting Algorithms}
\label{sec:supp_algorithms}

\subsubsection{MM-GKS Algorithm}
\label{sec:supp_mmgks}

Algorithm~\ref{alg:mmgks} gives the full MM-GKS procedure 
\cite{mm-gks,lanza2015generalized} for solving the regularized linear 
subproblem
\[
\min_{\mathbf{u}} \frac{1}{2}\|\mathbf{H}\mathbf{u} - \mathbf{b}\|_2^2 
+ \lambda \sum_{j=1}^n \phi_\varepsilon((\boldsymbol{\Psi}\mathbf{u})_j),
\]
given a fixed forward operator $\mathbf{H}$ and regularization 
parameter $\lambda$. This is the core inner solver called by all 
variants of NL-RMM-GKS.

\begin{algorithm}
\caption{MM-GKS}
\label{alg:mmgks}
\begin{algorithmic}[1]
\REQUIRE $\mathbf{H},\,\boldsymbol{\Psi},\,\mathbf{b},\,\mathbf{u}^{(0)},\,\epsilon$
\ENSURE Approximate solution $\mathbf{u}^{(k+1)}$

\STATE Generate initial basis $\mathbf{V}_\ell$ with $\mathbf{V}_\ell^\top \mathbf{V}_\ell = \mathbf{I}$

\FOR{$k = 0,1,2,\ldots$ until convergence}

    \STATE $\mathbf{s}^{(k)} = \boldsymbol{\Psi}\mathbf{u}^{(k)}$ 
    \STATE $\mathbf{w}^{(k)}_\epsilon = ((\mathbf{s}^{(k)})^2 + \epsilon^2)^{-1/2}$
    \STATE $\mathbf{P}^{(k)}_\epsilon = \mathrm{diag}(\mathbf{w}^{(k)}_\epsilon)^{1/2}$

    \STATE $\mathbf{H}\mathbf{V}_{\ell+k} = \mathbf{Q}_H \mathbf{R}_H$
    \STATE $\mathbf{P}^{(k)}_\epsilon\boldsymbol{\Psi}\mathbf{V}_{\ell+k}
            = \mathbf{Q}_\Psi \mathbf{R}_\Psi$

    \STATE Select $\lambda^{(k)}$ by GCV

    \STATE 
    $\mathbf{z}^{(k+1)}
        = \arg\min_{\mathbf{z}}
          \left\|
          \begin{bmatrix}
            \mathbf{R}_H \\[2pt]
            \sqrt{\lambda^{(k)}}\,\mathbf{R}_\Psi
          \end{bmatrix}
          \mathbf{z}
          -
          \begin{bmatrix}
            \mathbf{Q}_H^\top\mathbf{b} \\[2pt]
            \mathbf{0}
          \end{bmatrix}
          \right\|_2^2$

    \STATE $\mathbf{u}^{(k+1)} = \mathbf{V}_{\ell+k}\mathbf{z}^{(k+1)}$

    \STATE $\mathbf{r}^{(k+1)}
           = \mathbf{H}^\top(\mathbf{H}\mathbf{u}^{(k+1)} - \mathbf{b})
           + \lambda^{(k)} \boldsymbol{\Psi}^\top (\mathbf{P}^{(k)}_\epsilon)^2 
             \boldsymbol{\Psi}\mathbf{u}^{(k+1)}$

    \STATE $\mathbf{r}^{(k+1)}
           \leftarrow
           \mathbf{r}^{(k+1)} - 
           \mathbf{V}_{\ell+k}\mathbf{V}_{\ell+k}^\top \mathbf{r}^{(k+1)}$

    \STATE $\mathbf{v}_{\text{new}}
           = \mathbf{r}^{(k+1)} / \|\mathbf{r}^{(k+1)}\|_2$

    \STATE $\mathbf{V}_{\ell+k+1} = [\mathbf{V}_{\ell+k}, \mathbf{v}_{\text{new}}]$

\ENDFOR
\end{algorithmic}
\end{algorithm}

\subsubsection{Enlarge and Compress Subroutines}
\label{sec:supp_enlarge_compress}

Algorithms~\ref{alg:enlarge} and~\ref{alg:compress} implement the 
subspace management subroutines used by RMM-GKS and all its extensions. 
\textsc{Enlarge} expands the current basis $\mathbf{V}_{k_{\min}}$ by 
appending new Golub-Kahan vectors until the basis reaches size $k_{\max}$. 
\textsc{Compress} reduces the enlarged basis back to size $k_{\min}$ via 
a truncated SVD, retaining the $k_{\min}$ directions of greatest variance 
to carry forward to the next iteration.
\begin{algorithm}
\caption{Enlarge}
\label{alg:enlarge}
\begin{algorithmic}[1]

\REQUIRE $\mathbf{H},\,\boldsymbol{\Psi},\,\mathbf{V}_{k_{\min}},\,\mathbf{d},\,
          \mathbf{u}^{(0)},\,\epsilon,\,s,\,\text{tol}_1,\,\lambda_{\mathrm{fix}}$
\ENSURE $\mathbf{u}^{(k_{\max})},\,\lambda^{(k_{\max})},\,
        \mathbf{V}_{k_{\max}},\,\mathbf{R}_H,\,\mathbf{R}_\Psi$

\FOR{$k = 0,\ldots,s-1$}

    \STATE $\mathbf{s}^{(k)} = \boldsymbol{\Psi}\mathbf{u}^{(k)}$
    \STATE $\mathbf{w}^{(k)}_\epsilon = ((\mathbf{s}^{(k)})^2+\epsilon^2)^{-1/2}$
    \STATE $\mathbf{P}^{(k)}_\epsilon = \mathrm{diag}(\mathbf{w}^{(k)}_\epsilon)^{1/2}$

    \STATE $\mathbf{H}\mathbf{V}_{k_{\min}+k} = \mathbf{Q}_H \mathbf{R}_H$
    \STATE $\mathbf{P}^{(k)}_\epsilon\boldsymbol{\Psi}\mathbf{V}_{k_{\min}+k}
            = \mathbf{Q}_\Psi \mathbf{R}_\Psi$

    \IF{$\lambda_{\mathrm{fix}}$ given}
        \STATE $\lambda^{(k)} = \lambda_{\mathrm{fix}}$
    \ELSE
        \STATE $\lambda^{(k)} = \arg\min_\lambda \Theta(\lambda)$
    \ENDIF

    \STATE $\mathbf{z}^{(k+1)}
           = (\mathbf{R}_H^\top\mathbf{R}_H +
              \lambda^{(k)}\mathbf{R}_\Psi^\top\mathbf{R}_\Psi)^{-1}
             \mathbf{R}_H^\top\mathbf{Q}_H^\top\mathbf{d}$

    \STATE $\mathbf{u}^{(k+1)} = \mathbf{V}_{k_{\min}+k}\mathbf{z}^{(k+1)}$

    \STATE $\mathbf{r}^{(k+1)}
           = \mathbf{H}^\top(\mathbf{H}\mathbf{u}^{(k+1)} - \mathbf{d})
           + \lambda^{(k)}\boldsymbol{\Psi}^\top(\mathbf{P}^{(k)}_\epsilon)^2
             \boldsymbol{\Psi}\mathbf{u}^{(k+1)}$

    \STATE $\mathbf{r}^{(k+1)}
            \leftarrow \mathbf{r}^{(k+1)}
            - \mathbf{V}_{k_{\min}+k}\mathbf{V}_{k_{\min}+k}^\top\mathbf{r}^{(k+1)}$

    \STATE $\mathbf{v}_{\text{new}}
            = \mathbf{r}^{(k+1)} / \|\mathbf{r}^{(k+1)}\|_2$

    \STATE $\mathbf{V}_{k_{\min}+k+1}
            = [\mathbf{V}_{k_{\min}+k},\,\mathbf{v}_{\text{new}}]$

    \IF{$\|\mathbf{u}^{(k+1)} - \mathbf{u}^{(k)}\|_2
         / \|\mathbf{u}^{(k)}\|_2 \le \text{tol}_1$}
        \STATE break
    \ENDIF

\ENDFOR

\STATE $\mathbf{u}^{(k_{\max})} = \mathbf{u}^{(k+1)}$
\STATE $\mathbf{V}_{k_{\max}} = \mathbf{V}_{k_{\min}+k+1}$

\end{algorithmic}
\end{algorithm}

\begin{algorithm}
\caption{Compress}
\label{alg:compress}
\begin{algorithmic}[1]

\REQUIRE $\mathbf{V}_{k_{\max}},\,\mathbf{R}_H,\,\mathbf{R}_\Psi,\,
          \mathbf{d},\,\mathbf{u},\,\mathbf{Q}_H,\,k_{\min},\,\lambda$
\ENSURE $\mathbf{V}_{k_{\min}}$

\STATE $W = \chi(\mathbf{R}_H,\mathbf{R}_\Psi,\mathbf{Q}_H,\mathbf{d},\lambda)$

\STATE $\widetilde{\mathbf{V}} = \mathbf{V}_{k_{\max}} W$

\STATE $\mathbf{u}_{\text{new}}
        = \mathbf{u} - \widetilde{\mathbf{V}}\widetilde{\mathbf{V}}^\top\mathbf{u}$

\STATE $\mathbf{u}_{\text{new}}
        = \mathbf{u}_{\text{new}} / \|\mathbf{u}_{\text{new}}\|_2$

\STATE $\mathbf{V}_{k_{\min}}
        = [\widetilde{\mathbf{V}},\,\mathbf{u}_{\text{new}}]$

\end{algorithmic}
\end{algorithm}

\subsubsection{UPDATE-PARAM-VarPro}
\label{sec:supp_update_varpro}

Algorithm~\ref{alg:update_varpro} implements the variable projection 
parameter update used in the VarPro realization of NL-RMM-GKS 
(Section~4.2 of the main paper). It exploits the separable structure 
of the VarPro objective to compute a Gauss-Newton step directly in 
the parameter space, treating the image as implicitly defined by the 
current parameter estimate.

\begin{algorithm}
\caption{UPDATE-PARAM-VarPro}
\label{alg:update_varpro}
\begin{algorithmic}[1]
\REQUIRE $\mathbf{H}(\cdot)$, $\boldsymbol{\Psi}$, $\mathbf{b}$, 
$\mathbf{p}^{(k)}$, $\mathbf{V}_{k_{\min}}^{(k)}$, $\hat{\lambda}$, 
$\mathrm{maxiter}_p$, $c_1$, $\beta$, $\bar{\alpha}$
\ENSURE $\mathbf{p}^{(k+1)}$

\FOR{$\ell = 0, 1, \ldots, \mathrm{maxiter}_p - 1$}

    \STATE Form $\mathbf{H}_\ell = \mathbf{H}(\mathbf{p}^{(k,\ell)})$ 
    and reduced basis $\hat{\mathbf{A}}_\ell = \mathbf{H}_\ell 
    \mathbf{V}_{k_{\min}}^{(k)}$

    \STATE Compute VarPro residual:
    \[
        \mathbf{y}^{(k,\ell)} = \arg\min_{\mathbf{y}} 
        \left\|\begin{bmatrix}\hat{\mathbf{A}}_\ell \\ 
        \sqrt{\hat{\lambda}}\hat{\mathbf{B}}_\ell\end{bmatrix}\mathbf{y} 
        - \begin{bmatrix}\mathbf{b} \\ 
        \mathbf{0}\end{bmatrix}\right\|_2^2, \quad 
        \hat{\mathbf{u}}^{(k,\ell)} = \mathbf{V}_{k_{\min}}^{(k)} 
        \mathbf{y}^{(k,\ell)}
    \]

    \STATE Compute VarPro gradient via implicit differentiation 
    \cite{golub1973differentiation,golub2003separable}:
    \[
        \mathbf{g}^{(k,\ell)} = \nabla_{\mathbf{p}} 
        \mathcal{J}_{\varepsilon,\hat{\lambda}}(\hat{\mathbf{u}}^{(k,\ell)}, 
        \mathbf{p}^{(k,\ell)})
    \]

    \STATE Form damped Gauss-Newton matrix 
    $\mathbf{J}^{(k,\ell)}_{\mathbf{p}}$ and compute direction:
    \[
        \mathbf{d}^{(k,\ell)} = -(\mathbf{J}^{(k,\ell)}_{\mathbf{p}})^{-1} 
        \mathbf{g}^{(k,\ell)}
    \]

    \STATE Backtracking line search with Armijo condition ($c_1$, 
    $\beta$, $\bar{\alpha}$) to find $\alpha^{(k,\ell)}$

    \STATE $\mathbf{p}^{(k,\ell+1)} = \mathbf{p}^{(k,\ell)} + 
    \alpha^{(k,\ell)} \mathbf{d}^{(k,\ell)}$

\ENDFOR

\STATE $\mathbf{p}^{(k+1)} = \mathbf{p}^{(k,\mathrm{maxiter}_p)}$
\end{algorithmic}
\end{algorithm}

\subsection{Effect of Inner Iteration Count on Streaming Performance}
\label{sec:supp_inner_iters}

We investigate how the number of inner MM-GKS iterations per block 
affects reconstruction quality and runtime in the streaming setting, 
using the Test~1 setup from Section~6.2 of the main paper 
(static Shepp-Logan CT, $N = 4$ blocks, $p_{\mathrm{true}} = 0.2421^\circ$). 
We vary the inner iteration count $m \in \{5, 10, 15, 20\}$ and report 
final RRE, parameter error, and wall-clock time.

\paragraph{Results.}
Figure~\ref{fig:supp_inner_iters} shows that reconstruction quality 
improves as $m$ increases from 5 to 10, with diminishing returns 
beyond $m = 10$. Specifically, RRE decreases from 0.2103 at $m=5$ 
to 0.1453 at $m=10$, and only marginally further to 0.1388 at $m=20$, 
while runtime increases roughly linearly with $m$. Parameter estimation 
is largely insensitive to $m$ across the tested range. These results 
support the choice of $m = 10$ inner iterations used throughout the 
main paper as a good balance between accuracy and computational cost. 
Table~\ref{tab:supp_inner_iters} gives the full numerical summary.

\begin{figure}[htbp]
\centering
\subfloat[RRE vs. outer iteration for varying inner iterations (5, 10, 15, 20)]{
    \includegraphics[width=.9\textwidth]{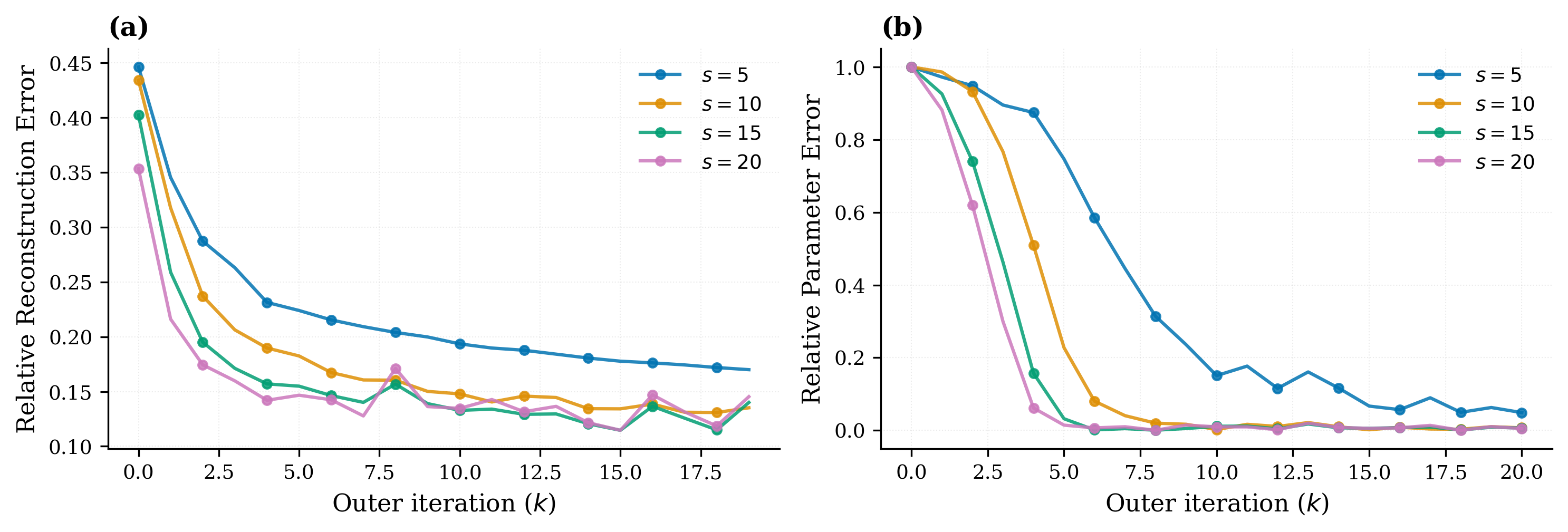}
    \label{fig:exp_1b_convergence_plots}
}\\[.1em]
\centering
\subfloat[Visual reconstructions comparing ground truth with results from 5, 10, 15, 20 inner iterations]{
    \includegraphics[width = .9\textwidth]{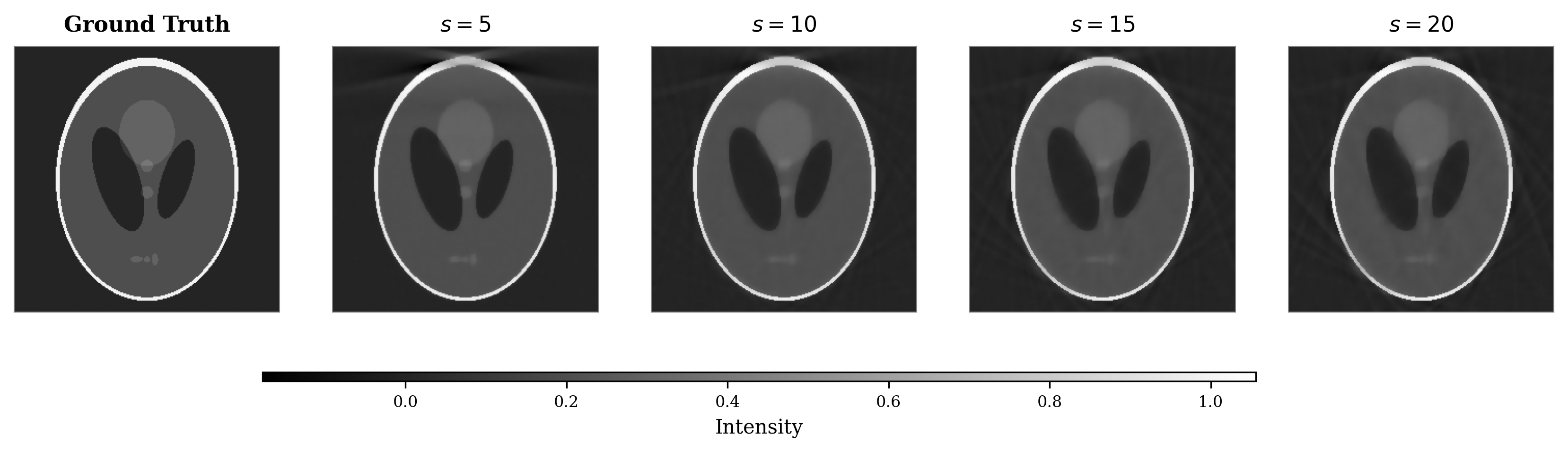}
    \label{fig:exp_1b_reconstruction_plots}
}
\caption{Effect of inner iteration count $m$ on streaming NL-RMM-GKS ($N=4$ blocks, Test~1 setup). 
(a) shows diminishing returns beyond 10 inner iterations, with all configurations converging to similar final RRE. 
(b) confirms that visual quality is comparable across settings, suggesting 10 iterations provides the best speed-accuracy balance.}
\label{fig:supp_inner_iters}
\end{figure}

\begin{table}[htbp]
\centering
\caption{Effect of inner iteration count on s-NL-RMM-GKS 
($N=4$ blocks, Test~1 setup, $p_{\mathrm{true}} = 0.2421^\circ$).}
\label{tab:supp_inner_iters}
\begin{tabular}{lcccccc}
\toprule
Inner Iter & Time (min) & Peak Mem. (GB) & Final RRE& Param. Err & $p^*$ (\degree)\\
\midrule
5 & 0.22& 0.088& 0.1699& 0.0490&0.2302\\
10 & 0.41& 0.118& 0.1351& 0.0071&0.2404\\
15 & 0.67& 0.152& 0.1400& 0.0066&0.2405\\
20 & 0.89& 0.188& 0.1453& 0.0053&0.2408\\
\bottomrule
\end{tabular}
\end{table}

\end{document}